\definecolor{purple}{rgb}{0.8,0.12,0.8}
\definecolor{orange}{rgb}{1.0,0.7,0.0}
\definecolor{pink}{rgb}{1,0.5,0.8}
\definecolor{blackg}{rgb}{0.1,0.25,0.1}
\definecolor{ForestGreen}{cmyk}{0.91,0,0.88,0.42}
\definecolor{Turquoise}{cmyk}{0.85,0,0.20,0}
\newcommand{\cF}{\mathcal{F}}
\newcommand{\cO}{\mathcal{O}}
\newcommand{\cU}{\mathcal{U}}
\newcommand{\br}{\mathbf{r}}
\newcommand{\bs}{\mathbf{s}}
\newcommand{\fB}{\mathfrak{B}}
\newcommand{\fS}{\mathfrak{S}}
\newcommand{\fb}{\mathfrak{b}}
\newcommand{\fl}{\mathfrak{l}}
\newcommand{\fs}{\mathfrak{s}}
\newcommand{\sC}{\mathscr{C}}
\newcommand{\sD}{\mathscr{D}}
\newcommand{\sH}{\mathscr{H}}
\newcommand{\sP}{\mathscr{P}}
\newcommand{\sQ}{\mathscr{Q}}
\newcommand{\sS}{\mathscr{S}}
\newcommand{\Z}{\mathbb{Z}}
\newcommand{\N}{\mathbb{N}}
\newcommand{\C}{\mathbb{C}}
\newcommand{\al}{\alpha}
\newcommand{\be}{\beta}
\newcommand{\si}{\sigma}
\newcommand{\la}{\lambda}
\newcommand{\ga}{\gamma}
\newcommand{\ze}{\zeta}
\newcommand{\de}{\delta}
\newcommand{\La}{\Lambda}
\newcommand{\Ga}{\Gamma}
\newcommand{\De}{\Delta}
\newcommand{\bla}{\boldsymbol{\la}}
\newcommand{\bnu}{\boldsymbol{\nu}}
\newcommand{\bmu}{\boldsymbol{\mu}}
\newcommand{\ta}{\tilde{a}}
\newcommand{\tc}{\tilde{c}}
\newcommand{\te}{\tilde{e}}
\newcommand{\tf}{\tilde{f}}
\newcommand{\tga}{\tilde{\gamma}}
\newcommand{\tbs}{\tilde{\bs}}
\newcommand{\tbla}{\tilde{\boldsymbol{\la}}}
\newcommand{\hga}{\hat{\ga}}
\newcommand{\ra}{\rightarrow}
\newcommand{\lra}{\longrightarrow}
\newcommand{\eq}{\Leftrightarrow}
\newcommand{\mand}{\quad\text{and}\quad}
\newcommand{\Uqe}{\mathcal{U}_q(\mathfrak{sl}_e)}
\newcommand{\Ueprime}{\mathcal{U}_q' (\widehat{\mathfrak{sl}_e})}
\newcommand{\Uinf}{\mathcal{U}_q (\mathfrak{sl}_\infty)}
\newcommand{\bemptyset}{\boldsymbol{\emptyset}}
\newcommand{\ds}{\displaystyle}
\newcommand{\llb}{\llbracket}
\newcommand{\rrb}{\rrbracket}
\newcommand{\wt}{\text{\rm wt}}
\newcommand{\Id}{\text{\rm Id}}
\newcommand{\h}{\text{\rm h}}
\newcommand{\res}{\text{\rm res}}
\newcommand{\cont}{\text{\rm cont}}
\newcommand{\rs}{\textbf{\rm RS}}
\newcommand{\reading}{\text{\rm read}}
\newcommand{\bladot}{\overset{\bullet}{\bla}}
\newcommand{\bmudot}{\overset{\bullet}{\bmu}}
\newcommand{\dix}{10}
\newcommand{\onze}{11}
\newcommand{\douze}{12}
\newcommand{\treize}{13}
\newcommand{\bdeux}{\boldsymbol{2}}
\newcommand{\btrois}{\boldsymbol{3}}
\newcommand{\bquatre}{\boldsymbol{4}}
\newcommand{\bcinq}{\boldsymbol{5}}
\newcommand{\bsix}{\boldsymbol{6}}
\newcommand{\bsept}{\boldsymbol{7}}
\newcommand{\bhuit}{\boldsymbol{8}}
\theoremstyle{plain}
\newtheorem{thm}{Theorem}[section]
\newtheorem{lem}[thm]{Lemma}
\newtheorem{cor}[thm]{Corollary}
\newtheorem{prop}[thm]{Proposition}
\newtheorem{propdef}[thm]{Proposition-Definition}
\newtheorem{prope}[thm]{Property}
\theoremstyle{definition}
\newtheorem{defi}[thm]{Definition}
\newtheorem{notation}[thm]{Notation}
\theoremstyle{remark}
\newtheorem{exa}[thm]{Example}
\newtheorem{rem}[thm]{Remark}
\title{Crystal isomorphisms in Fock spaces and Schensted correspondence in affine type $A$}
\author{Thomas Gerber 
\footnote{\textsc{Lehrstuhl D f\"ur Mathematik, RWTH.}
Pontdriesch 14, 52062 Aachen, Germany. 
E-mail address : \url{gerber@math.rwth-aachen.de} 
}
}
\begin{document}
 
\maketitle

\renewcommand{\labelitemi}{$-$}

\abstract{
We are interested in the structure of the crystal graph of level $l$ Fock spaces representations of $\Ueprime$.
Since the work of Shan \cite{Shan2008}, we know that this graph encodes the modular branching rule for a corresponding cyclotomic rational Cherednik algebra.
Besides, it appears to be closely related to the Harish-Chandra branching graph for the appropriate finite unitary group, according to \cite{GerberHissJacon2014}.
In this paper, we make explicit a particular isomorphism between connected components of the crystal graphs of Fock spaces. 
This so-called "canonical" crystal isomorphism turns out to be expressible only in terms of:
\begin{itemize}
 \item Schensted's classic bumping procedure,
 \item the cyclage isomorphism defined in \cite{JaconLecouvey2010},
 \item a new crystal isomorphism, easy to describe, acting on cylindric multipartitions.
\end{itemize}
We explain how this can be seen as an analogue of the bumping algorithm for affine type $A$.
Moreover, it yields a combinatorial characterisation of the vertices of any connected component of the crystal of the Fock space.
}

\renewcommand{\labelitemi}{$\bullet$}

\section{Introduction}

To understand the representations of the quantum algebras of affine type $A$, 
Jimbo, Misra, Miwa and Okado introduced in \cite{JMMO1991} Fock spaces of arbitrary level $l$.
These are vectors spaces over $\C(q)$ depending on a parameter $\bs\in\Z^l$, with basis the set of all $l$-partitions. 
We denote by $\cF_\bs$ such a space.
They made explicit an action of the quantum algebra $\Ueprime$ which endows $\cF_\bs$
with the structure of an integrable $\Ueprime$-module.
In addition, there is a natural subrepresentation $V(\bs)$ of $\cF_\bs$, the one generated by the empty $l$-partition,
that gives a concrete realisation of any abstract highest weight $\Ueprime$-module, provided $\bs$ is suitably chosen.

According to the works of Kashiwara \cite{Kashiwara1991}, these abstract modules have a nice underlying combinatorial structure: their crystal.
In particular, we can define the so-called crystal operators, crystal graph, crystal basis, and global basis for such modules.
The combinatorial nature of Fock spaces provides a convenient framework for the study of the crystal structure
of the highest weight $\Ueprime$-modules.
Actually, the whole space $\cF_\bs$ has a crystal structure, and the features of the highest weight $\Ueprime$-representations (most interestingly the global basis)
also exist more generally for the whole module $\cF_\bs$. 
This was the point of Uglov's work \cite{Uglov1999} (see also \cite{Yvonne2005} or \cite{Leclerc2008}).

\medskip

As a matter of fact, the study of crystals of Fock spaces lies at the core of several algebraic problems.
First of all, there exists an intimate relation with the modular representation theory of complex reflection groups.
The first achievement in this direction was Ariki's proof in \cite{Ariki1996} of the LLT conjecture \cite{LLT1996},
enabling the computation of the decomposition matrices for Hecke algebras of type $G(l,1,n)$ (also known as Ariki-Koike algebras) via
the matrices of the canonical basis of level $l$ Fock spaces.
His categorification arguments also proved that the crystal graph of $V(\bs)$ describes the branching rule for the Ariki-Koike algebra, see \cite{Ariki2007}.
From then, many authors have highlighted the links between the two theories in papers such as \cite{Grojnowski1999}, \cite{Uglov1999} or \cite{BrundanKleshchev2001}.
For instance, the matrix of Uglov's canonical basis of $\cF_\bs$ is seen as the decomposition matrix of a cyclotomic $q$-Schur algebra, see \cite{VaragnoloVasserot1999}.

Actually, the crystal structure on Fock spaces has even deeper interpretations.
According to \cite{Shan2008}, the category $\cO$ for representations of the rational Cherednik algebras $\sH$ associated to $G(l,1,n)$, $n\geq0$,
also has a crystal structure, given, again, by some $i$-restriction and $i$-induction functors,
and which is isomorphic to the crystal of the whole $\cF_\bs$ (for the appropriate $\bs$).
In fact, Losev has proved in \cite[Theorem 2.1]{Losev2013} that the branching graph arising from Shan's induction and restriction functors 
precisely coincides with Uglov's version of the crystal graph of the Fock space.
Accordingly, the main result of this paper, namely Theorem \ref{thmisophi}, is expected to have an interpretation in terms of representation
theory of rational Cherednik algebras.
Note also that understanding the crystal structure on $\cF_\bs$ helps determining the finite-dimensional modules for these Cherednik algebras,
as explained in the papers by Shan and Vasserot \cite[Section 6]{ShanVasserot2012} and by Gordon and Losev \cite[Section 6.9.3]{GordonLosev2014}.

Finally, let us mention that according to the recent observations exposed in \cite{GerberHissJacon2014}, 
the information encoded by the crystal graph of an appropriate level $2$ Fock space conjecturally sheds some light on another completely different problem,
namely understanding the distribution of the unipotent modules for finite unitary groups into Harish-Chandra series.
More precisely, the crystal of this Fock space should represent the Harish-Chandra branching graph for the associated unitary group.
In particular, \cite[Conjecture 5.7]{GerberHissJacon2014} implies the existence of two different labelings for the same Harish-Chandra series.
The crystal isomorphism constructed in the present paper (Corollary \ref{corcci}) then permits to check 
that these two labelings are indeed compatible, this is \cite[Theorem 7.8]{GerberHissJacon2014}.

\medskip

This article is concerned about the more specific study of crystal graphs within Fock spaces. 
There are several connected components in such a graph, each of which is entirely determined 
(and hence parametrised) by its so-called highest weight vertex.
In particular, the connected component with highest weight vertex the empty $l$-partition is
the crystal graph of the highest weight representation $V(\bs)$.
What we call a crystal isomorphism within Fock spaces is essentially a graph isomorphism. 
We demand that it maps the highest weight vertex of some connected component to
the highest weight vertex of some other connected component (a priori of a different Fock space),
and that it intertwines the structure of oriented colored graph.
Because of the ubiquitous combinatorics in Fock spaces, we expect these crystal isomorphisms
to have an nice combinatorial description.

In their papers \cite{JaconLecouvey2010} and \cite{JaconLecouvey2012}, Jacon and Lecouvey have gathered partial information
about such mappings.
Indeed, in \cite{JaconLecouvey2010}, in order to generalise the results of \cite{Jacon2007} for $l=2$,
they described the crystal isomorphisms mapping the crystal graph of $V(\bs)$ to that of $V(\br)$,
when $\bs$ and $\br$ are in the same orbit under the action of $\widehat{\fS_l}$.
In \cite{JaconLecouvey2012}, they explained how an arbitrary crystal isomorphism should act on a highest weight vertex.

The point of this article is to solve the problem entirely, by determining crystal isomorphisms in Fock spaces in a more general setting.
By Proposition \ref{defcanonicaliso}, each connected component ot the crystal graph of
$\cF_\bs$ is isomorphic to the crystal of some $V(\br)$, 
where $\br$ belongs to a particular fundamental domain for the action of $\widehat{\fS_l}$ for which a simple combinatorial description of this crystal is known
(its vertices are the "FLOTW" $l$-partitions defined in \ref{defflotw}).
The question is then to determine the associated crystal isomorphism $\Phi$.
Precisely, we aim to express explicitely and combinatorially the action of $\Phi$ on any $\bla\in\cF_\bs$.
We call this particular mapping the \textit{canonical $\Ueprime$-crystal isomorphism} (Definition \ref{defcanonicaliso}).
Drawing a parallel with the non-affine case (representations of $\Uqe$, or $\Uinf$),
we can then regard the construction $\bla \mapsto \Phi(\bla)$ as an affine analogue of Schensted's bumping algorithm.
Indeed, in the case of regular type $A$, it is known (Theorem \ref{thmrs}) 
that the canonical crystal isomorphism is exactly the bumping procedure.
Roughly speaking, we replace the bumping algorithm on semistandard tableaux by a rectification 
procedure on symbols (also called keys or tabloids in the literature) yielding, in turn, semistandard symbols, cylindric symbols, and FLOTW symbols.

\medskip

The paper is organized as follows.
Section \ref{problem} contains the basic notations and definitions needed to handle the combinatorics of crystals in Fock spaces, 
as well as a quick review on the theory of $\Ueprime$-representations, and more particularly the Fock space representations.
We also formally express the problem we wish to solve, which translates to finding the canonical $\Ueprime$-crystal isomorphism.

In Section \ref{e=inf}, we first roughly explain how the limit case $e\ra\infty$ gives another structure on Fock spaces,
namely that of a $\Uinf$-module. 
We then recall some classic results about $\Uinf$-crystals (and their relation to Schensted's bumping procedure) 
and explain how this solves our problem when Fock spaces are considered as $\Uinf$-modules.
In this perspective, the  canonical $\Ueprime$-crystal isomorphism we construct in the last section 
can be seen as an affine version of the bumping algorithm.

Then, we show in Section \ref{reduction} that both structures are compatible. More precisely, any $\Uinf$-crystal isomorphism 
is also a  $\Ueprime$-crystal isomorphism (Proposition \ref{propcompat}).
Using this, we determine a way to restrict ourselves only to the study of so-called cylindric multipartitions.
This requires the results about $\Ueprime$-crystal isomorphisms obtained in \cite{JaconLecouvey2010}.
In particular, there exists a particular crystal isomorphism, which consists of "cycling" multipartitions,
and which is typical of the affine case.
Note that this cyclage procedure is constructed in a similar way as the cyclage procedure on tableaux
defined by Lascoux and Sch\"utzenberger in \cite[Chapter 5]{Lothaire2002} in the non-affine case, see also \cite{Shimozono2001}.

Section \ref{cylindric} treats the case of cylindric multipartitions.
The key ingredient is Theorem \ref{thmisopsi}, and the expected canonical crystal isomorphism is described in Theorem \ref{thmisophi}.
Together with Section \ref{reduction}, this eventually enables the determination of the canonical crystal isomorphism $\Phi$ in full generality.
We also explain in which sense we regard our isomorphism $\Phi$ as an affine analogue of Schensted's bumping procedure.

Finally, Section \ref{hw} is an application of these results. We deduce a direct (i.e. pathfinding-free) algorithmic
characterisation of the vertices of any connected component of the crystal graph of the Fock space.
Note that this requires the invertibility of the map $\Phi$. This is achieved by adding some "recording data" to the construction of $\Phi$,
and gives an affine analogue of the whole (one-to-one) Robinson-Schensted-Knuth correspondence.

Appendix \ref{appendix} contains the proofs of three technical key lemmas stated in Section \ref{cylindric}.

\section{Formulation of the problem}

\label{problem}

\subsection{Generalities}

\label{generalities}

We recall the usual notations and definitions about the combinatorics of multipartitions. 

Fix $n\in\N$ and $l\in\Z_{>0}$.
A \textit{partition} $\la$ of $n$ is a sequence $(\la_1,\la_2,\dots)$ such that $\la_a\geq \la_{a+1}$ for all $a$, and $\sum_a \la_a =n$.
The integer $n$ is then called the \textit{rank} of $\la$ and denoted by $|\la|$. Each $\la_a$ is called a \textit{part} of $\la$.
For convenience, we often consider that a partition has infinitely many parts equal to zero.
The set of partitions of $n$ is denoted by $\Pi(n)$.
We also set $h(\bla)=\max_{\la_a\neq 0} a$, and call $h(\bla)$ the \textit{height} of $\bla$.
Out of simplicity, we use the multiplicative notation for a partition $\la$. For instance, $(4,2,2,1,0,\dots)=(4.2^2.1)$.
A \textit{$l$-partition} (or simply \textit{multipartition}) $\bla$ of $n$ is a $l$-tuple 
of partitions $\la^c$, for $c\in\llb 1, l\rrb$ such that $\sum_{c=1}^l |\la^c| =n$.
The integer $n$ is again called the \textit{rank} of $\bla$ and denoted by $|\bla|$;
and we write $\Pi_l(n)$ for the set of $l$-partitions of $n$.
Moreover, we denote by $\emptyset$ the partition $(0,0,\dots)$ and by $\bemptyset$ the multipartition $(\emptyset,\dots,\emptyset)$.

A multipartition $\bla$ is often identified with its Young diagram $[\bla]$:

$$[\bla]:=\Big\{ \ \ (a,b,c) \ \ ;  \ \ a\geq 1 , c \in\llb 1, l\rrb, b\in\llb 1, \la_a^c\rrb \ \ \Big\}$$

In turn, the Young diagram of $\bla$ is itself depicted by a $l$-tuple of array where the $c$-th array is the superposition of $\la_a^c$ boxes, $a\geq1$.
For example, $[(1, 2^2,\emptyset)]= \left( \; \tiny \yng(1) \; , \; \yng(2,2) \; , \; \emptyset  \right)$.
Each box in this diagram is then labeled by an element $(a,b,c)$ of $[\bla]$. These elements are called the \textit{nodes} of $\bla$.
A node $\ga$ of $\bla$ is said to be \textit{removable} (or a node \textit{of type $R$}) if $[\bla]\backslash \{ \ga \}$ is still the Young diagram of some $l$-partition $\bmu$.
In this case, we also call $\ga$ an \textit{addable} node (or a node \textit{of type $A$}) of $\bmu$.

A \textit{$l$-charge} (or simply \textit{multicharge}) is a $l$-tuple $\bs=(s_1,\dots s_l)$ of integers.
A \textit{charged multipartition} is a formal pair denoted $|\bla,\bs\rangle$, where $\bla$ is a $l$-partition and $\bs$ is a $l$-charge.
We can then define the \textit{content} of a node $\ga=(a,b,c)$ of $\bla$ charged by $\bs$ as follows:

$$\cont_{\bla}(\ga)= b - a + s_c .$$

Also, given $e\in\Z_{>1}$, we define the \textit{residue} of $\ga$ by $$\res_{\bla} (\ga) = \cont_{\bla}(\ga) \mod e.$$

Let $i\in\llb 0, e-1 \rrb$. A node $\ga$ of $\bla$ is called an \textit{$i$-node} if $\res_{\bla}(\ga) = i $.
If $\ga=(a,b,c)$ is a node of $\bla$, denote by $\ga^+$ the node $(a,b+1,c)$, that is
the node located on the right of $\ga$. Similarly, denote by $\ga^-$ the node $(a,b-1,c)$, the node located on the left of $\ga$.
Of course, $\mu=\ga^+ \eq \ga=\mu^-$. Also, if $\ga$ is an $i$-node, $\ga^+$ is an $(i+1)$-node.

We can represent each charged multipartition $|\bla,\bs \rangle$ by its Young diagram whose boxes are filled by the associated contents.
\newcommand{\moinsun}{$-$1}
\newcommand{\moinsdeux}{$-$2}
For instance, $$|(4.1, 2^2, 3 ),(0,3,-2) \rangle = \left( \; \tiny \young(0123,\moinsun) \; , \; \young(34,23) \; , \; \young(\moinsdeux\moinsun0)  \right).$$

\medskip

There is an equivalent way to represent charged multipartitions by another combinatorial object, namely by the so-called \textit{symbols}.
Let us recall their definition, following \cite{GeckJacon2011}.
Having fixed a multipartition $\bla$ and a multicharge $\bs$, take $p\geq \max_c(1-s_c+h(\la^c))$.
We can first define $$\fb_a^c (\bla) := \la_a^c - a + p + s_c,$$ for $1\leq c \leq l$ and $1\leq a \leq p+s_c$.
We then set $\fB_\bs^c(\bla) = ( \fb_{p+s_c}^c(\bla) , \dots, \fb_1^c(\bla) ),$ and the $\bs$-symbol of $\bla$ of size $p$ is the following $l$-tuple:
$$\fB_\bs(\bla) = (\fB_\bs^1(\bla),\dots,\fB_\bs^l (\bla) ).$$
It is pictured in an array whose $c$-th row, numbered from bottom to top, is $\fB_\bs^c(\bla)$.

\begin{exa}\label{exasymbol}
Take $\bla=(4.1, 2^2, 3 )$ and $\bs=(0,3,-2)$.
Then we can take $p=4$, and we get $\fB_\bs^1(\bla)= (0,1,3,7)$,  $\fB_\bs^2(\bla)=(0,1,2,3,4,7,8)$ and $\fB_\bs^3(\bla)=(0,4)$, which is represented by 
$$\fB_\bs(\bla)=\begin{pmatrix}0 & 4 & & & & &\\ 
                 0 & 1  & 2& 3 & 4 & 7 & 8 \\
                 0 & 1 & 3 & 7 & & &
                \end{pmatrix} .$$
\end{exa}

Note that one recovers the elements $\fb_a^c(\bla)$ encoding non-zero parts by translating by $p$ the contents of the rightmost nodes in the Young diagram.
In fact, it is easy to see that both objects encode exactly the same data.
Throughout this paper, we often switch between the classic approach of "Young diagrams with contents" on the one hand, and the use of symbols on the other hand.

\begin{defi}\label{defsemistandard} Let $\bs\in\Z^l$ and $\bla\in\cF_\bs$.
The symbol $\fB_\bs(\bla)$ is called \textit{semistandard} if the three following conditions are satisfied:
\begin{itemize}
\item $s_1\leq s_2 \leq \dots \leq s_l$,
\item the entries in each column of $\fB_\bs(\bla)$ are non-decreasing,
\item the entries in each row of $\fB_\bs(\bla)$ are increasing.
\end{itemize}
\end{defi}

\medskip

For $e\in\Z_{>1}$, we denote $$\sS_e=\{ \bs \in \Z^l \, | \, 0\leq s_c - s_{c'} < e \text{ for } c'<c \}.$$

We define a family of particular multipartitions, the \textit{FLOTW multipartitions} (for Foda, Leclerc, Okado, Thibon, Welsh \cite{FLOTW1999}).
We will see in the Section \ref{equivalentmp} why this is an object of interest in this paper.

\begin{defi}\label{defflotw}
Let $\bs\in\sS_e$.
A charged multipartition $|\bla,\bs\rangle$ is called FLOTW
\footnote{In the paper \cite{FLOTW1999}, the authors use the original terminology of \cite{JMMO1991},
namely the FLOTW multipartitions are called \textit{highest-lift} multipartitions. 
In \cite{JMMO1991}, the combinatorics are expressed in terms of \textit{paths} and \textit{patterns},
and cylindric multipartitions correspond to the so-called \textit{normalized} patterns.
}
if:
\begin{itemize}
 \item For all $1\leq c \leq l-1$, $\la_a^c\geq \la_{a+s_{c+1}-s_c}^{c+1}$ for all $a\geq 1$; and $\la_a^l\geq \la_{a+e+s_1-s_l}$ for all $a\geq 1$.
\item For all $\al>0$, the residues of the nodes $(a,\al,c)$ with $\la_a^c=\al$ (i.e. the rightmost nodes of the rows of length $\al$ of $\bla$)
do not cover $\llb 0, e-1 \rrb$.
\end{itemize}
If $|\bla,\bs\rangle$ only verifies the first condition, we say that it is \textit{cylindric}.
\end{defi}

\begin{rem}
In particular, the symbol of a FLOTW $l$-partition is semistandard.
\end{rem}

\medskip

Multipartitions are the natural objects used to understand combinatorially the representations of the quantum algebras of affine type $A$.
In the following section, we recall some definitions and facts about these algebras, their representations, 
before focusing in Section \ref{equivalentmp} on the crystal structure they are endowed with.

\subsection{Fock spaces representations of quantum algebras}

\label{fockspace}

In the sequel, we fix $e\in\Z_{>1}$.

For the purpose of this article, it is not necessary to review the theory of quantum algebras.
The reader is invited to refer to \cite{Ariki2002} and \cite{HongKang2002} for detailed definitions and properties.
However, we recall quickly below the theory of highest weight integrable representations of $\Ueprime$. 

Essentially, the quantum algebra $\Ueprime$ is a $\C(q)$-algebra which is a one-parameter deformation 
of the universal enveloping algebra of the affine Kac-Moody algebra $\widehat{\fs\fl_e}$,
whose underlying Weyl group is the affine group of type $A_{e-1}$.
It is defined by generators, denoted by $e_i, f_i, t_i^{\pm 1}$ for $i\in\llb 0, e-1\rrb$, and some relations.
Besides, it is possible to endow it with a coproduct, which makes it a Hopf algebra.

\medskip

Denote $\La_i$, $i\in\llb 0, e-1 \rrb$ and $\de$ the fundamental weights for $\widehat{\fs\fl_e}$.
Recall that the simple roots are then given by $\al_i:=-\La_{i-1 \text{mod} e} + 2\La_{i} - \La_{i+1\text{mod} e}$ for $0\leq i \leq e-1$.
According to \cite[Chapter 6]{Ariki2002}, to each dominant integral weight $\La$ is associated a particular $\Ueprime$-module $V(\La)$,
called the \textit{the highest weight module of highest weight $\La$},
verifying the following property:

\begin{prope}\label{ueprime}
We have $V(\La^{(1)}) \simeq V(\La^{(2)})$ as $\Ueprime$-modules if and only if $\La^{(1)}-\La^{(2)}\in\Z\de$.
\end{prope}

Moreover, provided we work in the category of so-called "integrable" $\Ueprime$-modules,
it is known that:
\begin{enumerate}
\item For all dominant integral weight $\La$, the module $V(\La)$ is irreducible,
\item Each irreducible $\Ueprime$-module is isomorphic to some $V(\La)$,
\item Each $\Ueprime$-module is semisimple.
\end{enumerate}

%
%
%
%
%

%
%

\medskip

We now construct a practical representation of $\Ueprime$, the \textit{Fock space representation},
which turns out to be integrable and has nice combinatorial properties. 
In particular, this will yield a realisation of the highest weight $\Ueprime$-modules $V(\La)$.
In this perspective, we fix $l\in\Z_{>0}$.

\begin{defi}\label{deffockspace} Let $\bs=(s_1,\dots,s_l)\in\Z^l$.
The Fock space associated to $\bs$ is the following $\C(q)$-vector space: $$\cF_\bs := \bigoplus_{n\in\N} \bigoplus_{\la\in\Pi_l(n)} \C(q) |\bla,\bs \rangle.$$
\end{defi}

This vector space is made into an integrable $\Ueprime$-module via the action of the generators of $\Ueprime$ detailed in \cite[Proposition 3.5]{JMMO1991}.
We do not recall here what theses actions are, since it is not important in our context.
However, with this definition of the action, it is easy to see that the element $|\bemptyset,\bs\rangle$ is a highest weight vector. Denote by $\La(\bs)$ its weight.
By \cite[Section 4.2]{Uglov1999} or \cite[Proposition 3.7]{Yvonne2005}, we now that for all $|\bla,\bs\rangle$ in $\cF_\bs$,
\begin{equation}\label{wt}
 \wt(|\bla,\bs\rangle)=\sum_{c=1}^l\La_{s_c \text{ mod} e} -\sum_{i=0}^{e-1}M_i(\bla,\bs)\al_i - \De(\bs)\de,
\end{equation}

where $M_i(\bla,\bs)$ is the number of $i$-nodes in $\bla$, 
and where $\De(\bs)$ is a coefficient depending only on $\bs$ and $e$.
In particular, we see that
\begin{equation}\label{weight} \La(\bs)=\wt(|\bemptyset,\bs\rangle) = \sum_{c=1}^l\La_{s_c \text{ mod} e} - \De(\bs)\de. \end{equation}
Consider the module $$V(\bs):= \Ueprime.|\bemptyset,\bs\rangle.$$
It is an irreducible integrable highest weight $\Ueprime$-module with highest weight $\La(\bs)$. 
Hence, by the previous properties of $\Ueprime$-representations, we know that this module $V(\bs)$ is isomorphic to the "abstract" $\Ueprime$-module $V(\La(\bs))$.

\medskip

Using Property \ref{ueprime} and Relation (\ref{weight}), we have a $\Ueprime$-module isomorphism between $V(\bs)$ and $V(\br)$
whenever $\br$ is equal to $\bs$ up to a permutation of its components and translations by a multiple of $e$.
In fact, when two multicharges are related in such a way, we regard them as being in the same orbit under some group action.
This is why we now introduce the \textit{extended affine symmetric group} $\widehat{\fS}_l$.
It is the group with the following presentation:
\begin{itemize}
 \item Generators: $\si_i$, $i\in\llb 1, l-1\rrb$ and $y_i$, $i\in\llb 1, l\rrb$.
 \item Relations : \begin{itemize}
                       \item $\si_i^2=1$ for $i\in\llb 1, l-1\rrb$,
                       \item $\si_i\si_{i+1}\si=\si_{i+1}\si_i\si_{i+1}$  for $i\in\llb 1, l-2\rrb$,
                       \item $\si_i\si_j=\si_j\si_i$ if $i-j\neq 1 \mod l$,
                       \item $y_iy_j=y_jy_i$ for $i,j\in\llb 1, l\rrb$,
                       \item $\si_i y_j=y_j\si_i$ for $i\in\llb 1, l-1\rrb$ and $j\in\llb 1, l\rrb$ such that $j\neq i, i+1 \mod l$,
                       \item $\si_i y_i\si_i=y_{i+1}$ for $i\in\llb 1, l-1\rrb$.
                    \end{itemize}

\end{itemize}

It can be regarded as the semi-direct product $\Z^l\rtimes\fS_l$,
by considering that the elements $y_i$ form the standard basis of $\Z^l$, 
and that the elements $\si_i$ are the usual generators of $\fS_l$ (i.e. the transpositions $(i , i+1)$).

Now, there is a natural action of $\widehat{\fS}_l$ on $\Z^l$ as follows. Let $\bs=(s_1,\dots s_l) \in\Z^l$. We set
\begin{itemize}
 \item $\si_i . \bs = (s_1, \dots, s_{i-1},s_{i+1},s_i, \dots, s_l)$ for all $i\in\llb 1, l-1\rrb$, and
 \item $y_i .\bs=(s_1,\dots,s_{i-1},s_i+e,s_{i+1},\dots, s_l)$ for all $i\in\llb 1, l\rrb$.
\end{itemize}

It is easy to see that a fundamental domain for this action is given by
$$\sD_e=\{ \bs \in \Z^l \, | \, 0\leq s_1 \leq \dots \leq s_l < e \}.$$

\begin{rem}
Note that 
$$\sD_e \subset \sS_e.$$
It is important not to be confused between $\sD_e$ and the set  $\sS_e$ defined just before Definition \ref{defflotw}.
Indeed, it is sufficient to work in $\sS_e$ to have the explicit combinatorial caracterisation of the crystal of $V(\bs)$,
but it is necessary to work with $\sD_e$ if we expect some unicity properties (as in upcoming Proposition \ref{defcanonicaliso}).
\end{rem}

With these notations, it is clear that $V(\bs)\simeq V(\br)$ as $\Ueprime$-modules if and only if $\br$ and $\bs$ are
in the same orbit under the action of $\widehat{\fS_l}$.
Actually, there is a tight connection between these highest weight $\Ueprime$-representations $V(\bs)$ and the modular
representations of Ariki-Koike algebras (the Hecke algebras of the complex reflection groups $G(l,1,n)$),
which are defined using the parameters $e$ and $\bs$, and which are invariant when $\bs$ varies inside a given orbit under the action of $\widehat{\fS_l}$.
The main link is known as Ariki's theorem \cite[Theorem 12.5]{Ariki2002},\cite{Ariki1996}, which is a proof of the LLT conjecture \cite{LLT1996}.
A good review on this subject can also be found in the book of Geck and Jacon \cite[Chapters 5 and 6]{GeckJacon2011}.

\medskip

From now on, we allow ourselves to write $\bla$ instead of $|\bla,\bs\rangle$ for an element of a Fock space  if there is no possible confusion on the multicharge.

\subsection{Crystal isomorphisms and equivalent multipartitions}

\label{equivalentmp}

One of the most important features of Fock spaces is the existence of a \textit{crystal}, in the sense of Kashiwara \cite{Kashiwara1991}. 
This yields a nice combinatorial structure, which is in particular encoded in the \textit{crystal graph} of $\cF_\bs$.
Its definition requires an action of the so-called \textit{crystal operators} $\te_i$ and $\tf_i$, for $0\leq i \leq e-1$, on the Fock space $\cF_\bs$.
We do not give their original definition, since the only result we need in the sequel is upcoming Theorem \ref{thmcrystaloperators}.

In order to determine this action, we introduce an order on the set of nodes of a charged multipartition $|\bla,\bs\rangle$.
Let $|\bla,\bs\rangle \in \cF_\bs$ and $\ga=(a,b,c)$ and $\ga'=(a',b',c')$ be two removable or addable $i$-nodes of $\bla$.
We write  $$\ga\prec_\bs \ga' \text{ if }   \left\{      \begin{array}{l}      b-a+s_c<b'-a'+s_{c'} \text{ or }
\\  b-a+s_{c}=b'-a'+s_{c'} \mand c>c'.   \end{array}   \right.$$
Note that this order is needed to define the action of $\Ueprime$ on $\cF_\bs$ in Section \ref{fockspace}.

For all $i\in\llb 0, e-1 \rrb$, the \textit{$i$-word} for $\bla$ is the sequence obtained by writing the addable and removable $i$-nodes of $\bla$
in increasing order with respect to this order $\prec_\bs$, each of them being encoded by a letter $A$ if it is addable, 
and a letter $R$ if it is removable. We denote it by $w_i(\bla)$.
The \textit{reduced $i$-word} $\hat{w}_i(\bla)$ is the word in the letters $A$ and $R$ obtained by deleting recursively all occurences $RA$ in $w_i(\bla)$.
Hence $\hat{w}_i(\bla)=A^\al R^\be$ for some non-negative integers $\al$ and $\be$.
If it exists, the rightmost $A$ (respectively the leftmost $R$) in $\hat{w}_i(\bla)$ encodes a node which is called 
the \textit{good} addable (respectively removable) $i$-node of $\bla$.

\begin{exa}
 If $|\bla,\bs\rangle= \left( \; \tiny \young(0123,\moinsun) \; , \; \young(34,23) \; , \; \young(\moinsdeux\moinsun0)  \right)$, $e=3$,
and $i=0$, then $w_0(\bla)=ARARR$ and therefore  $\hat{w}_0(\bla)=ARR$.
The good addable $0$-node of $\bla$ is thus $(2,1,3)$, and the good removable $0$-node is $(2,2,2)$.
\end{exa}

\medskip

\begin{thm}[\cite{JMMO1991}]\label{thmcrystaloperators}
Let $\bla \in\cF_\bs$.
The crystal operators act as follows:
\begin{itemize}
 \item $\te_i(\bla) = \left\{      \begin{array}{ll}     \bla\backslash\{\ga\} & \text{ \quad if $\ga$ is the good removable $i$-node of $\bla$}
\\  0 & \text{ \quad if $\bla$ has no good removable $i$-node},  \end{array}   \right.$
 \item $\tf_i(\bla) = \left\{      \begin{array}{ll}     \bla\cup\{\ga\} & \text{ \quad if $\ga$ is the good addable $i$-node of $\bla$}
\\  0 & \text{ \quad if $\bla$ has no good addable $i$-node}.  \end{array}   \right.$
\end{itemize}
\end{thm}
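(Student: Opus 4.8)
The plan is to prove this the way one always establishes such combinatorial formulas for crystal operators of an integrable module: reduce to the representation theory of $\mathcal{U}_q(\mathfrak{sl}_2)$, one colour $i\in\llb 0,e-1\rrb$ at a time. Fix such an $i$ and let $\mathcal{U}_i\subset\Ueprime$ be the subalgebra generated by $e_i,f_i,t_i^{\pm1}$, a copy of $\mathcal{U}_q(\mathfrak{sl}_2)$. First I would write down explicitly, from \cite[Proposition 3.5]{JMMO1991}, the action of $e_i$ and $f_i$ on $\cF_\bs$: the coefficient of $|\bla\cup\{\ga\},\bs\rangle$ in $f_i|\bla,\bs\rangle$ (with $\ga$ an addable $i$-node of $\bla$) is $q$ raised to an explicit power recording the position of $\ga$ relative to the other addable and removable $i$-nodes of $\bla$ in the order $\prec_\bs$, and symmetrically for $e_i$. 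This is exactly where $\prec_\bs$, and hence the $i$-word $w_i(\bla)$, enters the picture.

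Next I would organise the basis of $\cF_\bs$ into $\mathcal{U}_i$-submodules. Call two charged multipartitions \emph{$i$-equivalent} if one is obtained from the other by adding and removing $i$-nodes; each equivalence class spans a $\mathcal{U}_i$-submodule, and these submodules exhaust $\cF_\bs$. To such a class one attaches the $\prec_\bs$-increasingly ordered list of its ``$i$-slots'' — the positions $(a,b,c)$ that are $i$-nodes and are addable or removable for the members of the class — and one shows that the associated $\mathcal{U}_i$-submodule is isomorphic, as a $\mathcal{U}_q(\mathfrak{sl}_2)$-module, to a tensor product $V\otimes\dots\otimes V$ of two-dimensional modules, one factor per slot, the $q$-powers appearing above being precisely those produced by the iterated coproduct. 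Under this isomorphism the $i$-word $w_i(\bla)$ becomes the sequence of tensor factors, read off as $R$ for an occupied slot and $A$ for a free one.

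Then I would invoke the standard description of the crystal of a tensor power of the $2$-dimensional $\mathcal{U}_q(\mathfrak{sl}_2)$-module: its crystal lattice is spanned by the monomial basis, and Kashiwara's tensor-product (signature) rule — which, after matching conventions, is exactly the recursive deletion of the factors $RA$ defining $\hat w_i(\bla)=A^\al R^\be$ — computes $\te_i$ and $\tf_i$ by acting on the rightmost surviving $A$ for $\tf_i$ (adding that node) and on the leftmost surviving $R$ for $\te_i$ (removing that node), and by returning $0$ when no such letter exists. Transporting this back through the isomorphism yields precisely the stated formulas, and along the way one obtains that the crystal lattice of $\cF_\bs$ is $\bigoplus_{\bla}\C[q]_{(q)}|\bla,\bs\rangle$ with crystal basis the classes $|\bla,\bs\rangle \bmod qL$.

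The main obstacle is the middle step: checking that the explicit coefficients of \cite{JMMO1991} for $e_i$ and $f_i$ are literally the matrix coefficients of the $(m-1)$-fold comultiplication on $V\otimes\dots\otimes V$ ordered by $\prec_\bs$. This is a bookkeeping computation with powers of $q$; equivalently, one can bypass the tensor-product language altogether and instead compute $\tf_i|\bla,\bs\rangle \bmod qL(\bs)$ directly from the action formula, where the coefficient of $|\bla\cup\{\ga\},\bs\rangle$, after the usual normalisation, is a Gaussian-binomial-type expression tending to $1$ exactly when $\ga$ is the good addable $i$-node and to $0$ otherwise — but this is the same computation in disguise. Everything else is the classical $\mathfrak{sl}_2$ tensor-product crystal machinery of \cite{Kashiwara1991}.
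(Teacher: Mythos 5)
The paper does not prove this statement; it cites it to \cite{JMMO1991} and treats it as a black box, so there is no internal proof to compare yours against. Your sketch is nonetheless a correct and essentially standard route to the result, in substance the argument in the cited source and in later expositions: restrict to the $i$-th copy of $\mathcal{U}_q(\mathfrak{sl}_2)$, observe that $\cF_\bs$ splits under it into the spans of $i$-equivalence classes, match the $q$-powers in the Fock-space formulas for $e_i,f_i$ with the iterated coproduct on a tensor power of the two-dimensional module ordered by $\prec_\bs$, and read off $\te_i,\tf_i$ from Kashiwara's signature rule, which is exactly the $RA$-cancellation defining $\hat{w}_i(\bla)$. Two points you should tighten rather than wave at. First, verifying that $L(\bs)=\bigoplus_{\bla}\C[q]_{(q)}\,|\bla,\bs\rangle$ is a crystal lattice with $\{|\bla,\bs\rangle \bmod qL(\bs)\}$ as crystal basis should not be relegated to ``standard machinery'': it is precisely the content of the theorem, and it is what the tensor-product identification together with Kashiwara's theorem on tensor products of $\mathcal{U}_q(\mathfrak{sl}_2)$-crystal bases delivers. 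Second, the suggested ``bypass'' of computing $\tf_i|\bla,\bs\rangle \bmod qL(\bs)$ directly from the action of $f_i$ conflates $f_i$ with $\tf_i$: the latter is defined through the $i$-string (divided-power) decomposition of $|\bla,\bs\rangle$, which is not visible from the raw $f_i$-action alone; producing that decomposition is exactly what the tensor-product identification accomplishes, so there is no genuine shortcut there.
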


\begin{rem}
 Clearly, because of Relation (\ref{wt}), we have $\wt(\te_i(\bla))=\wt(\bla)+\al_i$ and $\wt(\tf_i(\bla))=\wt(\bla)-\al_i$.
\end{rem}

\medskip

We can now define the \textit{crystal graph} of $\bla$.

\begin{defi}\label{defcrystal}
The crystal graph $B(\bla,\bs)$ of $\bla\in\cF_\bs$ is the oriented colored graph with:
\begin{itemize}
 \item vertices : the multipartitions obtained from $\bla$ after applying any combination of the operators $\te_i$ and $\tf_i$.
\item arrows : $\bla \overset{i}{\lra} \bmu$ whenever $\bmu = \tf_i(\bla)$.
\end{itemize}
\end{defi}

Theorem \ref{thmcrystaloperators} says that when $\te_i$ acts non-trivially on $\bla$ (i.e. when $\bla$ has a good removable $i$-node),
then $\te_i$ removes a node in $\bla$.
Hence, any sequence $\dots \te_{i_k}\te_{i_{k-1}} \dots \te_{i_1} (\bla)$ in $B(\bla,\bs)$ has at most $|\bla|$ elements.
In particular, it is finite, and there is a sequence of maximal length $m$ of operators $\te_{i_k}$ such that 
$\te_{i_p}\te_{i_{p-1}} \dots \te_{i_1} (\bla)$ is a multipartition and $\te_{j}\te_{i_{p}} \dots \te_{i_1} (\bla) = 0$ for all $j\in\llb 0, e-1 \rrb$.
Consider the multipartition $$\overset{\bullet}{\bla}:=\te_{i_p}\te_{i_{p-1}} \dots \te_{i_1} (\bla)$$
It is not complicated to show that it does not depend on the maximal sequence of operators $\te_{i_k}$ chosen.
In other words, all sequences give the same multipartition $\overset{\bullet}{\bla}$, which we call the \textit{highest weight vertex} of $B(\bla,\bs)$.
Also, every vertex $\bmu$ in $B(\bla,\bs)$ writes $\bmu=\tf_{i_r}\tf_{i_{r-1}}\dots \tf_{i_1} (\overset{\bullet}{\bla})$ for some $r\in\N$.
Therefore, any crystal graph is entirely determined by its highest weight vertex,
and if we know this highest weight vertex, all other vertices are recursively computable.
It turns out that in one particular case, we know an explicit combinatorial description of the vertices of $B(\bla,\bs)$.

\textbf{Notation:}
When $\overset{\bullet}{\bla}=\bemptyset$, we write $B(\bemptyset,\bs)=:B(\bs)$.

\medskip

\begin{thm}[\cite{FLOTW1999}]\label{thmflotw} Let $\bs\in\sS_e$.
The vertices of $B(\bs)$ are the FLOTW multipartitions.
\end{thm}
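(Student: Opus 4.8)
The plan is to prove the two inclusions separately, working entirely with the combinatorial description of the crystal operators from Theorem \ref{thmcrystaloperators}. Since $B(\bs)$ is by definition the connected component of $\bemptyset$ in the crystal graph of $\cF_\bs$, it suffices to show: (i) every multipartition obtained from $\bemptyset$ by a sequence of operators $\tf_i$ is FLOTW, and (ii) every FLOTW multipartition arises in this way. The whole argument then reduces to three combinatorial facts about FLOTW multipartitions, all with $\bs\in\sS_e$ fixed.

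First I would check that $\bemptyset$ is FLOTW --- both conditions of Definition \ref{defflotw} being vacuous for the empty multipartition --- and then prove the key \emph{stability} statement: if $|\bla,\bs\rangle$ is FLOTW and $i$ is such that $\tf_i(\bla)\neq 0$ (resp.\ $\te_i(\bla)\neq 0$), then $|\tf_i(\bla),\bs\rangle$ (resp.\ $|\te_i(\bla),\bs\rangle$) is again FLOTW. To do this one identifies the good addable (resp.\ removable) $i$-node $\ga$ as the node encoded by the rightmost $A$ (resp.\ leftmost $R$) of the reduced $i$-word $\hat{w}_i(\bla)=A^\al R^\be$, and then verifies that adding (resp.\ removing) $\ga$ preserves (a) the cylindricity inequalities $\la_a^c\ge\la_{a+s_{c+1}-s_c}^{c+1}$ and $\la_a^l\ge\la_{a+e+s_1-s_l}^1$, and (b) the condition that, for each $\al>0$, the residues of the rightmost nodes of the length-$\al$ rows of $\bla$ do not exhaust $\llb 0,e-1\rrb$. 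Throughout, the hypothesis $0\le s_c-s_{c'}<e$ for $c'<c$ is exactly what lets one pin down the position of $\ga$ relative to the order $\prec_\bs$. The third fact is that a \emph{nonempty} FLOTW multipartition always possesses a good removable node: one would locate it by examining the $\prec_\bs$-largest node of $\bla$, whose residue $i$ forces the reduced $i$-word to terminate in an unmatched $R$, again using the cylindricity inequalities to see that no addable $i$-node lies further to the right.

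Granting these facts, the theorem follows quickly. By the first two and induction on the rank, every vertex of $B(\bs)$ --- being of the form $\tf_{i_r}\cdots\tf_{i_1}(\bemptyset)$ --- is FLOTW, which is (i). Conversely, starting from a FLOTW $|\bla,\bs\rangle$ and repeatedly applying a well-chosen $\te_i$ --- legal by the third fact, and keeping us inside the FLOTW set by stability --- we produce a chain of FLOTW multipartitions of strictly decreasing rank; by the third fact this chain can only stop at a multipartition of rank $0$, namely $\bemptyset$. Reading the chain backwards through the corresponding $\tf_i$'s exhibits $\bla$ as a vertex of $B(\bs)$, which is (ii).

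The hard part will be the stability statement, and within it the preservation of the second FLOTW condition. The cylindricity inequalities are essentially local and behave predictably under a single box move, but the second condition is global in the row lengths: adding one node can create a new length-$\al$ row while simultaneously removing the last length-$(\al-1)$ row, so one must rule out the sudden appearance of all $e$ residues among the rightmost nodes of rows of a fixed length. Controlling this cleanly requires the precise interplay between the order $\prec_\bs$, the $RA$-cancellation rule defining good nodes, and the constraint $\bs\in\sS_e$. A more conceptual alternative would be to realize $B(\bs)=B(\La(\bs))$ as the path model attached to the relevant perfect crystal and to match paths with FLOTW multipartitions directly --- this is close to the original ``highest-lift'' viewpoint of \cite{JMMO1991} and \cite{FLOTW1999} --- but that route requires importing the perfect-crystal formalism, whereas the stability argument above stays within the combinatorial setup already developed here.
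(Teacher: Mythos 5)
The paper does not actually prove this theorem; it is stated as a citation to \cite{FLOTW1999}, so there is no in-text proof to compare your attempt against. That said, your three-lemma framework (stability of FLOTW under $\tf_i$ and $\te_i$, plus existence of a good removable node for any nonempty FLOTW multipartition, plus induction on rank) is the standard combinatorial route to this result, close in spirit to the treatment in Geck--Jacon's book and genuinely different from the original path/pattern proof in \cite{FLOTW1999} and \cite{JMMO1991}, which you correctly flag as the alternative. So the overall decomposition is sound, and you have correctly identified that the preservation of the second FLOTW condition is where the real difficulty lies.

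There is, however, a concrete error in your sketch of the third fact. You propose to find the good removable node by examining the $\prec_\bs$-largest node of $\bla$, arguing that its residue $i$ forces $\hat{w}_i(\bla)$ to end in an unmatched $R$. But the $\prec_\bs$-largest node of a nonempty multipartition need not be removable at all: for $l=1$, $\bs=(0)$, $\bla=(2^2)$, the node of maximal content is $(1,2,1)$, which is not removable since $\la_1=\la_2$. Even if one instead takes the $\prec_\bs$-largest \emph{removable} node $\ga$, one still has to rule out the existence of an addable $i$-node $\de$ with $\ga\prec_\bs\de$, whose $A$ could cancel the $R$ encoding $\ga$ in the $RA$-reduction; that is exactly where the cylindricity inequalities and the hypothesis $\bs\in\sS_e$ have to do real work, and the sketch does not address it. Together with the fact that the stability lemma is stated but not proved, the proposal is a correct plan rather than a complete argument; as written it would not substitute for the cited reference.
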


Actually, for all $\bs\in\Z^l$, $B(\bs)$ is the crystal graph of the irreducible highest weight $\Ueprime$-module $V(\bs)$ in the sense of Kashiwara \cite{Kashiwara1991}.
Accordingly, the combinatorics of $B(\bla,\bs)$ has some algebraic interpretation.
In particular, the vertices of $B(\bla,\bs)$ (the FLOTW multipartitions if $\bs\in\sS_e$) label both the so-called crystal and global (or canonical) basis of $V(\bs)$.
We also refer to \cite{GeckJacon2011} for details.

\medskip

We can also define a crystal graph for the whole Fock space $\cF_\bs$.
Its vertices are all the $l$-partitions, and the arrows are defined as in Definition \ref{defcrystal}. We denote it by $B(\cF_\bs)$.
It has several connected components, each of them being parametrised by its highest weight vertex. In other terms,
\begin{equation}\label{decompcrystal} B(\cF_\bs) = \bigsqcup_{\overset{\bullet}{\bla}} B(\overset{\bullet}{\bla},\bs),\end{equation}
where the union is taken over all highest weight vertices $\overset{\bullet}{\bla}$, i.e. vertices without any removable node.

We can now introduce the notion of crystal isomorphism.

\begin{defi}\label{crystaliso}Let $\bla\in\cF_\bs$ and $\bmu\in\cF_\br$.
A \textit{$\Ueprime$-crystal isomorphism} is a map $\phi : B(\bladot,\bs) \lra B(\bmudot,\br)$ verifying:
\begin{enumerate}
 \item $\phi(\overset{\bullet}{\bla})=\overset{\bullet}{\bmu}$,
\item $\phi \circ \tf_i = \tf_i \circ \phi$ whenever $\tf_i$ acts non trivially.
\end{enumerate}
\end{defi}

By 2., the image of $B(\bladot,\bs)$ under $\phi$ is the whole crystal $B(\bmudot,\br)$.
In fact, this definition just says that $\phi$ intertwines the graph structures of $B(\bladot,\bs)$ and $B(\bmudot,\br)$.
%

\medskip

\begin{defi}\label{defequivmp}
Let $|\bla,\bs\rangle \in \cF_\bs$ and $|\bmu, \br \rangle \in \cF_\br$.
We say that $|\bla,\bs\rangle$ and $|\bmu, \br \rangle$ (or simply $\bla$ and $\bmu$) are \textit{equivalent} if 
there is a $\Ueprime$-crystal isomorphism $\phi$ between $B(\bladot,\bs)$ and $B(\bmudot,\br)$ such that $\phi(|\bla,\bs\rangle) = |\bmu,\br\rangle$.
\end{defi}

\begin{rem}
In other terms, $\bla$ and $\bmu$ are equivalent if they appear at the same place in their respective crystal graphs.
\end{rem}

\medskip

The isomorphisms of $\Ueprime$-modules $V(\bs)\simeq V(\br)$ whenever $\br \in \bs \ \text{mod} \widehat{\fS_l}$ (cf. Section \ref{fockspace})
yield isomorphisms of crystal graphs between $B(\bs)$ and $B(\br)$.
There exist also other natural crystal isomorphisms. 

\begin{propdef}\label{defcanonicaliso} Let $\bla \in \cF_\bs$.
There exists a unique $l$-charge $\br\in\sD_e$ and a unique FLOTW $l$-partition $\bmu\in\cF_\br$ such that
$|\bla,\bs\rangle$ and $|\bmu,\br\rangle$ are equivalent.
The associated $\Ueprime$-crystal isomorphism is called the \textit{canonical} crystal isomorphism.
\end{propdef}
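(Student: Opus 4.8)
The plan is to establish existence and uniqueness separately, both essentially reducing to facts about the orbit structure of $\widehat{\fS_l}$ on $\Z^l$ and Theorem \ref{thmflotw}. First I would prove existence. Starting from $|\bla,\bs\rangle$, pass to its highest weight vertex $\bladot$ in $B(\bla,\bs)$; this does not change the equivalence class, and it suffices to treat highest weight vertices. Now $B(\bladot,\bs)$ is, by the discussion following Theorem \ref{thmflotw}, the crystal graph of an irreducible highest weight $\Ueprime$-module, whose highest weight is a dominant integral weight of the form $\sum_{c=1}^l \La_{s'_c\bmod e}$ for a suitable shift of $\bs$. Choose $\br\in\sD_e$ in the $\widehat{\fS_l}$-orbit determined by the multiset $\{s_c \bmod e\}$ (this $\br$ exists and is unique since $\sD_e$ is a fundamental domain for the action). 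By Property \ref{ueprime} and relation (\ref{weight}), $V(\br)\simeq V(\bladot,\bs)$ as $\Ueprime$-modules — more precisely, one must check that $\La(\bladot,\bs)$ and $\La(\br)$ differ by a multiple of $\de$, which follows because they have the same $\La_i$-components and $\de$ is the only ambiguity. An isomorphism of irreducible integrable modules induces an isomorphism of crystals, hence a $\Ueprime$-crystal isomorphism $B(\bladot,\bs)\to B(\br)=B(\bemptyset,\br)$; since $\br\in\sD_e\subset\sS_e$, Theorem \ref{thmflotw} identifies the vertices of $B(\br)$ with FLOTW $l$-partitions, so the image of $|\bla,\bs\rangle$ under this isomorphism is a charged FLOTW $l$-partition $|\bmu,\br\rangle$, giving existence.

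Next I would prove uniqueness. Suppose $|\bla,\bs\rangle$ is equivalent to both $|\bmu,\br\rangle$ and $|\bmu',\br'\rangle$ with $\br,\br'\in\sD_e$ and $\bmu,\bmu'$ FLOTW. Composing the two crystal isomorphisms gives a $\Ueprime$-crystal isomorphism $\psi\colon B(\bmudot,\br)\to B(\bmudot',\br')$ sending $|\bmu,\br\rangle$ to $|\bmu',\br'\rangle$. Since $\bmu$ and $\bmu'$ are FLOTW with $\br,\br'\in\sS_e$, they are highest weight vertices of their crystals, i.e. $\bmudot=\bmu$ and $\bmudot'=\bmu'$. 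Thus $\psi$ is an isomorphism between the crystal graphs of the irreducible highest weight modules $V(\br)$ and $V(\br')$; two such modules have isomorphic crystal graphs iff they are isomorphic as modules (crystals determine the module in the integrable category), hence $V(\br)\simeq V(\br')$, which by the discussion in Section \ref{fockspace} forces $\br$ and $\br'$ to lie in the same $\widehat{\fS_l}$-orbit; as both lie in the fundamental domain $\sD_e$, we get $\br=\br'$. It then remains to see $\bmu=\bmu'$: $\psi$ is now an automorphism of $B(\br)$ fixing the highest weight vertex $\bemptyset$ and commuting with all $\tf_i$, hence (since every vertex is reached from $\bemptyset$ by a sequence of $\tf_i$'s, and $\psi$ intertwines these) $\psi$ is the identity, so $\bmu=\psi(\bmu)=\bmu'$.

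The step I expect to be the main obstacle is the assertion that a crystal graph isomorphism (a purely combinatorial, colored-graph isomorphism compatible with the $\tf_i$) forces an isomorphism of the underlying $\Ueprime$-modules — in other words, that the crystal is a complete invariant of irreducible integrable highest weight modules. This needs the uniqueness part of Kashiwara's crystal basis theory: an irreducible integrable highest weight module is determined by its highest weight, and its highest weight can be read off from the crystal graph (from the weights of the vertices, or simply from the number of $i$-arrows out of the highest weight vertex, which recovers the Cartan integers $\langle \La,\al_i^\vee\rangle$). Care must be taken because of Property \ref{ueprime}: the crystal only sees the weight modulo $\Z\de$, but that is exactly the ambiguity allowed by $V(\La^{(1)})\simeq V(\La^{(2)})\Leftrightarrow \La^{(1)}-\La^{(2)}\in\Z\de$, so no information is actually lost. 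Once this principle is in hand, both existence and uniqueness are bookkeeping about the fundamental domain $\sD_e$ and Theorem \ref{thmflotw}; I would state the module-theoretic input as a lemma (citing \cite{Kashiwara1991} or \cite{Ariki2002}) and keep the combinatorial argument self-contained.
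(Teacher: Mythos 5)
Your overall route (pass to the highest weight vertex, read off the target multicharge from its weight, use $\sD_e$ being a fundamental domain, then pin down uniqueness) is the same route the paper takes, and the meta-observation that the module-theoretic input is Kashiwara's theory is sound. But the write-up has two concrete slips, one of which is a genuine conceptual gap in the existence argument.

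In the existence half you define $\br$ as ``the representative in $\sD_e$ of the $\widehat{\fS_l}$-orbit determined by $\{s_c\bmod e\}$.'' That only works when $\bladot=\bemptyset$. When $\bladot\neq\bemptyset$ the highest weight of the component $B(\bladot,\bs)$ is $\wt(\bladot)=\sum_c\La_{s_c\bmod e}-\sum_iM_i(\bladot,\bs)\al_i-\De(\bs)\de$, and because the $\al_i$ are not multiples of $\de$, this weight is \emph{not} in the same $\widehat{\fS_l}$-orbit as $\La(\bs)$ in general; ``a suitable shift of $\bs$'' does not describe what happens. You have to read the new multicharge off the coefficients $a_i$ in $\wt(\bladot)=\sum_ia_i\La_i+d\de$ (that is, $\br$ has $a_i$ entries congruent to $i$), which is what the paper does. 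If you keep $\br$ attached to $\{s_c\bmod e\}$, the claimed isomorphism $V(\br)\simeq V(\bladot,\bs)$ simply fails outside the principal component.

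In the uniqueness half you assert that since $\bmu$, $\bmu'$ are FLOTW, ``they are highest weight vertices of their crystals, i.e. $\bmudot=\bmu$.'' That is backwards: Theorem \ref{thmflotw} says FLOTW $l$-partitions are exactly the vertices of $B(\br)=B(\bemptyset,\br)$, so $\bmudot=\bemptyset$, not $\bmu$ (e.g. $((1),\emptyset,\dots,\emptyset)$ is FLOTW and is not a highest weight vertex). Fortunately your next sentence is what you actually need---$\psi$ is an isomorphism $B(\bemptyset,\br)\to B(\bemptyset,\br')$---and the rest of the argument (same orbit, both in the fundamental domain, then $\psi$ fixes $\bemptyset$ and commutes with the $\tf_i$, hence is the identity) is correct. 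For reference, the paper handles uniqueness in one line, appealing directly to $\sD_e$ being a fundamental domain; your longer version is fine in spirit once the ``FLOTW $\Rightarrow$ highest weight vertex'' misstatement is repaired.
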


\begin{proof}
First of all, if $\bla\in B(\bs)$ then, by the remark just above Proposition \ref{defcanonicaliso}, 
there is a crystal isomorphism between $B(\bs)$ and $B(\br)$ where $\br$ is the representative of $\bs$ in the fundamental domain $\sD_e$.

Suppose now that $|\bla,\bs\rangle\in\cF_\bs$ such that $B(\bla,\bs)\neq B(\bs)$. 
This means that $\bla$ (as a vertex in its crystal graph) is not in the connected component whose highest weight vertex is the empty multipartition.
Then there is a sequence $(i_1,\dots,i_p)$ such that $\te_{i_p}\dots \te_{i_1}(\bla) = \overset{\bullet}{\bla}$, the highest weight vertex in $B(\bla,\bs)$.
Write $\wt(\overset{\bullet}{\bla})=\sum_{i=0}^{e-1} a_i\La_i +d\de$ 
and define $\br$ to be the increasing $l$-charge containing $a_i$ occurences of $i$. In particular, $\br\in\sD_e$.
Then we have a natural crystal isomorphism $B(\bla,\bs) \overset{\phi}{\simeq} B(\br)$,
and therefore there is a FLOTW $l$-partition $\bmu:=\phi(\bla)=\tf_{i_1}\dots\tf_{i_p}(\bemptyset)$ in $B(\br)$ equivalent to $\bla$.

These elements are clearly unique, since $\sD_e$ is a fundamental domain for the action of $\widehat{\fS_l}$.
\end{proof}

The goal of this paper is to find a direct and purely combinatorial way to determine this canonical crystal isomorphism,
without having to determine the sequence of operators leading to the highest weight vertex and taking the reverse path in $B(\bemptyset,\br)$
as explained in the previous proof.
In the case where $\bla$ is a highest weight vertex, then the canonical crystal isomorphism is simply the so-called "peeling procedure" described in \cite{JaconLecouvey2012},
and maps $|\bla,\bs\rangle$ to $|\bemptyset,\br\rangle$, where $\br$ is the only charge in $\sD_e$ which is in the orbit of $\bs$ under the action of the extended
affine symmetric group.

\medskip

Let us also mention that in his paper \cite{Tingley2008}, Tingley describes a combinatorial procedure, called the tightening of a descending $l$-abacus.
Endowing the set of $l$-abacuses with a crystal structure, he then shows \cite[Lemma 3.12 and Theorem 3.14]{Tingley2008} that 
this tightening procedure is a crystal isomorphism, and moreover that this crystal, restricted to the set of tight descending $l$-abacuses with fixed $l$-charge (or compactification),
gives a realisation of the crystal of the associated abstract irreducible highest weight $\widehat{\fs\fl_e}$-module.
It is well-known that there is a one-to-one correspondence between (charged) $l$-partitions and $l$-abacuses (with origin), see \cite{JamesKerber1984}, or Uglov's paper
\cite{Uglov1999} for our purpose.
Unfortunately, Tingley's crystal structure on abacuses is somewhat different from Uglov's, which is the one that we use here and that is usually preferred
when it comes to interpretations in terms of modular representation theory of $G(l,1,n)$.
It should be interesting to replace Tingley's result in our context and see how this tightening procedure is possibly related to our canonical crystal isomorphism,
see also Remark \ref{remabacus}.

\section{The case $e=\infty$}

\label{e=inf}

In this section, we consider the particular (and easier) case where $e=\infty$. 
This means that we regard Fock spaces as $\Uinf$-modules, where $\Uinf$ is defined as the direct limit of the quantum algebras $\cU_q(\mathfrak{sl}_e)$.
We refer e.g. to \cite{Kac1984} for detailed background on $\Uinf$.
Actually, there is an action of $\Uinf$ on Fock spaces which generalises the action of $\Ueprime$ when $e$ tends to $\infty$.
In particular, $\cF_\bs$ is made into an integrable $\Uinf$-module, and all the properties of the $\Ueprime$-representation $\cF_\bs$ stated in Section \ref{fockspace}
still hold for the $\Uinf$-module structure.
With this point of view, the algebra $\Uinf$ is the natural way to extend $\Ueprime$ when $e\ra\infty$.

In this setting, the notion of being FLOTW for a multipartition simply translates to its symbol being semistandard,
with an increasing multicharge.
And as a matter of fact, we know a $\Uinf$-crystal isomorphism which associates to each multipartition a new multipartition whose symbol is semistandard.
This is the point of the following section.

\subsection{Schensted's bumping algorithm and solution of the problem}

\label{rs}

Let $\bla\in\cF_\bs$.

We first introduce the reading of the symbol $\fB_\bs(\bla)$.
It is the word obtained by writing the elements of $\fB_\bs(\bla)$ from right to left, starting from the top row.
Denote it by $\reading(\bla,\bs)$.
The Robinson-Schensted insertion procedure (or simply Schensted procedure, or bumping procedure) enables to construct a semistandard symbol starting from such a word.
We only recall it on an example (\ref{exars}) below, see also Example \ref{exainvertiblityrs} in Section \ref{hw}.
For proper background, the reader can refer to e.g. \cite{Fulton1997} or \cite{Lothaire2002}.
Denote by $\sP(\reading(\bla,\bs))$ the semistandard symbol obtained from $\reading(\bla,\bs)$ by applying this insertion procedure.
Finally, we set $\rs(\bs)$ and $\rs(\bla)$ to be the FLOTW multicharge and multipartition determined by $\fB_{\rs(\bs)}(\rs(\bla))= \sP(\reading(\bla,\bs))$

We further write $\rs$ for the map $$\begin{array}{cccc} \rs : & B(\bladot,\bs) & \lra & B(\overset{\bullet}{\rs(\bla)},\rs(\bs)) \\
                                                                     &  |\bla,\bs\rangle &  \longmapsto  &  | \rs(\bla), \rs(\bs) \rangle. \end{array}$$

\begin{thm}\label{thmrs}
$|\bla,\bs\rangle$ and $|\bmu,\br\rangle$ are equivalent if and only if 
$\sP(\reading(\bla,\bs))=\sP(\reading(\bmu,\br))$.
\end{thm}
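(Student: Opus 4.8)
The plan is to transport the statement to the classical theory of Schensted insertion on words over $\Z$, by regarding the reading of the symbol as an element of a crystal of words.

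The first and main step is to identify the $\Uinf$-crystal structure on a connected component $B(\bla,\bs)$ with the one it induces on reading words. For $e=\infty$ the residue of a node equals its content, so an addable (resp.\ removable) $i$-node in row $a$ of the $c$-th component is exactly a place where the beta-number $\fb_a^c(\bla)$ equals $p+i-1$ and $p+i$ is missing from that row (resp.\ $\fb_a^c(\bla)=p+i$ and $p+i-1$ is missing), and applying $\tf_i$ (resp.\ $\te_i$) replaces that beta-number by $p+i$ (resp.\ by $p+i-1$) in $\fB_\bs(\bla)$. Encoding $\fB_\bs(\bla)$ by the word $\reading(\bla,\bs)$ on the alphabet $\Z$, I would check that the order $\prec_\bs$ on addable and removable $i$-nodes corresponds to the left-to-right order of the associated letters of $\reading(\bla,\bs)$, so that the prescription of Theorem~\ref{thmcrystaloperators} for the good $i$-node becomes precisely Kashiwara's signature (bracketing) rule for $\te_i,\tf_i$ on the $\Uinf$-crystal of words. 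Consequently $\bla\mapsto\reading(\bla,\bs)$ is an injective morphism of $\Uinf$-crystals from $B(\bla,\bs)$ into the crystal of words on $\Z$, intertwining $\te_i$ and $\tf_i$. This compatibility is implicit in the work of Uglov and of Jacon--Lecouvey; it is the only place where the specific combinatorics of Fock spaces is used, and I expect the bookkeeping matching $\prec_\bs$ with the reading order to be the one genuinely delicate point, everything else being formal.

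Granting this, I would invoke the standard facts about Schensted insertion on words over $\Z$ (see e.g.\ \cite{Fulton1997} or \cite{Lothaire2002}): the map $w\mapsto P(w)$ sending a word to its insertion tableau restricts, on each connected component of the crystal of words, to an isomorphism onto the crystal $B(\mathrm{sh}\,P(w))$ realized by semistandard tableaux, under which $w$ itself is sent to the tableau $P(w)$; here $P(\reading(\bla,\bs))=\sP(\reading(\bla,\bs))$ by definition. Since two semistandard tableaux coincide as soon as they occupy the same position in isomorphic connected components — connected normal $\Uinf$-crystals being rigid, and components attached to distinct dominant weights being non-isomorphic — the words $\reading(\bla,\bs)$ and $\reading(\bmu,\br)$ sit at the same place of isomorphic connected components of the word crystal if and only if $\sP(\reading(\bla,\bs))=\sP(\reading(\bmu,\br))$. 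Combining this with the first step, a $\Uinf$-crystal isomorphism $\phi$ with $\phi(|\bla,\bs\rangle)=|\bmu,\br\rangle$ exists if and only if $\sP(\reading(\bla,\bs))=\sP(\reading(\bmu,\br))$, which is the claim. Along the way one also records the (standard) fact that the insertion tableau of the reading word of a size-$p$ symbol is again the array of a symbol of a charged multipartition — so that $\rs(\bla)$ and $\rs(\bs)$ are well defined and, being semistandard with increasing charge, FLOTW in the $e=\infty$ sense of Theorem~\ref{thmflotw} — whence $\rs$ is a genuine map of crystal graphs and, by the above, a $\Uinf$-crystal isomorphism.
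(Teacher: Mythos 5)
Your outline is correct and is precisely the argument the paper itself invokes by reference: the paper offers no proof of Theorem~\ref{thmrs}, citing instead \cite{LascouxLeclercThibon1995} and \cite{Lecouvey2007}, whose content is exactly what you sketch (embed $B(\bladot,\bs)$ into the $\Uinf$-crystal of words via $\reading$, translate the $\prec_\bs$ signature rule into Kashiwara's bracketing, then use the fact that Schensted insertion realizes the crystal isomorphism onto semistandard tableaux together with rigidity of normal connected $\Uinf$-crystals). You are also right to single out the matching of $\prec_\bs$ with the reading order as the one genuinely delicate step: since $\prec_\bs$ breaks ties by \emph{larger} component first while $\reading$ scans the top row first, a direct left-to-right Kashiwara tensoring of $\reading(\bla,\bs)$ selects the good node at the wrong end of the reduced $i$-word, and one must adopt the ``right-to-left'' tensor convention used in \cite{LascouxLeclercThibon1995} (equivalently, reverse the reading) for the dictionary to close.
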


For a proof of this statement, see for instance \cite[Section 3]{LascouxLeclercThibon1995} or \cite{Lecouvey2007}, 
which state the result for $\cU_q(\mathfrak{sl}_e)$-crystals, relying on the original arguments of Kashiwara in \cite{Kashiwara1991} and \cite{KashiwaraNakashima1994}.
Moreover, since the symbol associated to $|\rs(\bla),\rs(\bs)\rangle$ is semistandard, we have the following result.

\begin{cor}\label{corrs}
 $\rs$ is the canonical $\Uinf$-crystal isomorphism.
\end{cor}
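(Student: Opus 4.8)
The plan is to deduce the statement directly from Theorem~\ref{thmrs} and from the characterization of the canonical $\Uinf$-crystal isomorphism (the $e=\infty$ analogue of Proposition-Definition~\ref{defcanonicaliso}): the canonical isomorphism sends each $|\bla,\bs\rangle$ to the unique charged multipartition $|\bmu,\br\rangle$ such that $\br$ is an increasing multicharge, $\bmu$ is FLOTW for $e=\infty$, and $|\bla,\bs\rangle$ is equivalent to $|\bmu,\br\rangle$ --- recalling that for $e=\infty$ "FLOTW" simply means "semistandard symbol". So I must show that $(\rs(\bs),\rs(\bla))$ is precisely this normalized representative of $|\bla,\bs\rangle$, for every $\bla$.

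Fix $\bla\in\cF_\bs$. First, by the very definition of $\rs$ the symbol $\fB_{\rs(\bs)}(\rs(\bla))=\sP(\reading(\bla,\bs))$ is semistandard; hence $\rs(\bs)$ is an increasing multicharge and $\rs(\bla)$ is FLOTW for $e=\infty$. Second, $|\bla,\bs\rangle$ and $|\rs(\bla),\rs(\bs)\rangle$ are equivalent: using the classical fact that inserting the reading word of a semistandard symbol returns that same symbol (see \cite{Fulton1997} or \cite{Lothaire2002}), one has
\[ \sP(\reading(\rs(\bla),\rs(\bs)))=\fB_{\rs(\bs)}(\rs(\bla))=\sP(\reading(\bla,\bs)), \]
so equivalence follows from Theorem~\ref{thmrs}. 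Therefore $(\rs(\bs),\rs(\bla))$ is the normalized representative of $|\bla,\bs\rangle$, which is unique by Proposition-Definition~\ref{defcanonicaliso}.

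Carrying this out for every vertex $\bla$ of a connected component $B(\bladot,\bs)$ shows that $\rs$ coincides on $B(\bladot,\bs)$ with the canonical $\Uinf$-crystal isomorphism: indeed, the uniqueness in Proposition-Definition~\ref{defcanonicaliso} --- which uses that the increasing multicharges form a fundamental domain for $\fS_l$ acting on $\Z^l$ --- means that the canonical isomorphism is nothing but the assignment sending $\bla$ to the unique $e=\infty$-FLOTW, increasingly charged multipartition equivalent to it, and the second paragraph identifies this assignment with $\rs$. In particular this forces all $\rs(\bla)$ with $\bla\in B(\bladot,\bs)$ to share one multicharge $\rs(\bs)$ and to lie in the single component $B(\rs(\bs))=B(\bemptyset,\rs(\bs))$, so that $\rs\colon B(\bladot,\bs)\to B(\rs(\bs))$ is a genuine colored-graph isomorphism, as required.

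The substance of the argument lies entirely in Theorem~\ref{thmrs} and in the uniqueness clause of the $e=\infty$ version of Proposition-Definition~\ref{defcanonicaliso}; the rest is bookkeeping. The one point deserving care is the consistency just mentioned --- checking that reading off the normalized representative vertex by vertex yields a well-defined isomorphism onto a \emph{single} connected component, rather than a map whose target a priori varies with the vertex --- but this is exactly what the uniqueness clause delivers.
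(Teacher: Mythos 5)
Your proof is correct and takes essentially the same route as the paper: the paper's one-line justification (``since the symbol associated to $|\rs(\bla),\rs(\bs)\rangle$ is semistandard'') is exactly what you spell out, namely that $\sP(\reading(\rs(\bla),\rs(\bs)))=\fB_{\rs(\bs)}(\rs(\bla))=\sP(\reading(\bla,\bs))$ because Schensted insertion of the reading word of a semistandard symbol recovers that symbol, so Theorem~\ref{thmrs} gives the equivalence, and the uniqueness of the increasing-multicharge, semistandard representative gives canonicity.
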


\medskip

\begin{exa}\label{exars}
 $\bs=(0,2,-1)$ and $\bla=(2.1,3,4.1^2)$.

\medskip

Then $\fB_\bs(\bla)=\begin{pmatrix}
                     0 & 2 & 3 & 7 &   &   &  \\
		     0 & 1 & 2 & 3 & 4 & 5 & 9\\
		     0 & 1 & 2 & 4 & 6 &   &  
                    \end{pmatrix}$.

\medskip

The associated reading is $\reading(\bla)=7 3 2 0 9 5 4 3 2 1 0 6 4 2 1 0$, and the Schensted algorithm yields

\medskip

$\sP(\reading(\bla))=\begin{pmatrix}
       0 & 1 & 2 & 3 & 4 & 5 & 7 \\
      0 &  1 & 2 & 3 & 6 & 9 & \\
      0 & 2 & 4  &   &   &   &
      \end{pmatrix}$.

\medskip

Hence $=\rs(\bs)=(-2,1,2)$ and $=\rs(\bla)=(2.1,4.2,1)$.

\end{exa}

\medskip

\begin{prope}\label{propertyrs}
Suppose $\bs$ is such that $s_1\leq \dots \leq s_l$. Take $\bla\in\cF_\bs$.
Then $|\rs(\bla)| \leq |\bla|$. Moreover, $|\rs(\bla)|=|\bla|$ if and only if $\rs(\bla)=\bla$.
\end{prope}

\begin{proof}
Because of Corollary \ref{corrs}, we know in particular that $\rs(\bla)$ is in the connected component of $B(\cF_{\rs(\bs)})$
whose highest weight vertex is $\bemptyset$.
Hence, if we write $\bemptyset = \te_{i_m}\dots \te_{i_1} (\rs(\bla))$, we have $|\rs(\bla)|= m $.
Now, because $\rs$ is a crystal isomorphism, we have $\overset{\bullet}{\bla}=\te_{i_m}\dots \te_{i_1}(\bla)$.
If $\bla \neq \rs(\bla)$, then the symbol of $|\bla,\bs\rangle$ is not semistandard,
and the fact that $s_1\leq\dots \leq s_l$ ensures that $\overset{\bullet}{\bla}\neq \bemptyset$.
Therefore, we have $|\bla|= |\overset{\bullet}{\bla}| + m > m = |\rs(\bla)|$.
\end{proof}

\subsection{Another $\Uinf$-crystal isomorphism}

\label{rmatrix}

Let $\si\in\fS_l$, and for $\bs=(s_1,\dots, s_l)\in\Z^l$ denote $\bs^\si=(s_{\si(1)},\dots, s_{\si(l)})$.

$$ \begin{array}{cccc} 
    \chi_\si : &  B(\bs) & \lra & B({\bs^\si}) \\
           & \bla & \longmapsto & \chi_\si(\bla)
   \end{array}$$
   
In fact, in \cite[Corollary 2.3.3]{JaconLecouvey2010}, the map $\chi_\si$ is described in the case where $\si$ is a transposition $(c,c+1)$.
We do not recall here the combinatorial construction of $\chi_\si(\bla)$, since it is not really important for our purpose.
However, we notice the following property. It will be used in the proof that the algorithm we construct in Section \ref{reduction} terminates.

\begin{prope}\label{propertyrmatrix}
For all $\si\in\fS_l$, and for all $\bla \in V(\bs)$, $$|\chi_\si(\bla)| = |\bla|.$$
\end{prope}

We also denote simply by $\chi$ the isomorphism corresponding to a permutation $\si$ verifying $s_{\si(1)}\leq s_{\si(2)} \leq \dots \leq s_{\si(l)}$
(i.e. the reordering of $\bs$).

\section{General case and reduction to cylindric multipartitions}

\label{reduction}

\subsection{Compatibility between $\Ueprime$-crystals and $\Uinf$-crystals}

\label{compat}

In this section, we use the subscript or superscript $e$ or $\infty$ 
to specify which module structure we are interested in, in particular for the (reduced) $i$-word, crystal operators, crystal graph.

\medskip

The aim is to show that any $\Uinf$-crystal isomorphism is also a $\Ueprime$-crystal isomorphism.
This comes as a natural consequence of the existence of an embedding of $B_e(\bladot)$ in $B_\infty(\bladot)$,
as explained in \cite[Section 4]{JaconLecouvey2010}.
Note that the embedding in our case will just map any $\bla\in B(\bladot,\bs)$ onto itself, unlike in \cite{JaconLecouvey2010}.

\begin{lem}\label{lemcompat1} Let $i\in\llb 0, e-1 \rrb$.
Suppose there is an arrow $\bla \overset{i}{\lra} \bmu$ in the crystal graph $B_e(\bla)$,
and denote $\ga:=[\bmu]\backslash [\bla]$. 
Then there is an arrow $\bla \overset{j}{\lra} \bmu$ in the crystal graph $B_\infty(\bla)$, where $j=\cont(\ga)$.
\end{lem}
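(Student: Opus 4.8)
The statement compares the $\Ueprime$-crystal operators (which use residues mod $e$ and the order $\prec_\bs$) with the $\Uinf$-crystal operators (which use contents, i.e. residues for $e=\infty$). The key observation is that an arrow $\bla \overset{i}{\lra}\bmu$ in $B_e(\bla)$ means that $\ga$ is the \emph{good} addable $i$-node of $\bla$, obtained by cancelling all $RA$-factors in the $i$-word $w_i^e(\bla)$. I want to show that this same node $\ga$ is the good addable $j$-node of $\bla$ for the $\Uinf$-structure, where $j=\cont(\ga)$. So first I would fix notation: let $j=\cont_{\bla}(\ga)$, so that $j\equiv i \pmod e$, and recall that the $\Uinf$-$j$-word $w_j^\infty(\bla)$ records exactly those addable/removable nodes of $\bla$ whose content is exactly $j$ (not just congruent to $j$ mod $e$), ordered by $\prec_\bs$ — but note that for nodes of a fixed content, the order $\prec_\bs$ is simply the order by the component index $c$ (decreasing), since $b-a+s_c$ is constant.

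The heart of the argument is to compare $w_i^e(\bla)$ and $w_j^\infty(\bla)$. The nodes appearing in $w_j^\infty(\bla)$ form a \emph{sub-word} of $w_i^e(\bla)$: they are the $i$-nodes of content exactly $j$, and crucially they occupy a contiguous block in $w_i^e(\bla)$ (consecutive with respect to $\prec_\bs$, because contents congruent to $i$ mod $e$ that equal $j$ are strictly separated from those equal to $j\pm e$ by the ordering). So I would partition $w_i^e(\bla)$ into blocks according to the actual content value $\dots, j-e, j, j+e, \dots$, each block being a maximal $\prec_\bs$-interval. Then I claim that the reduction process (deleting $RA$ factors) interacts with these blocks in a controlled way: after full reduction $\hat w_i^e(\bla)=A^\al R^\be$, and the rightmost surviving $A$ (the one encoding $\ga$) lies in the content-$j$ block. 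I would argue this by a cancellation/bracketing lemma: reducing $w_i^e(\bla)$ can be done block by block from left to right, and the rightmost uncancelled $A$ in the global reduction corresponds to the rightmost uncancelled $A$ within its own block after the block-local reductions have absorbed cross-block cancellations — this is the standard fact that the Lascoux–Schützenberger bracketing is associative/local. Since $\ga$ is this rightmost $A$, it lies in the $j$-block, and within that block it is still the rightmost uncancelled $A$, i.e. it is the good addable $j$-node for $\Uinf$. Hence $\tf_j^\infty(\bla)=\bmu$, giving the arrow $\bla\overset{j}{\lra}\bmu$ in $B_\infty(\bla)$.

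I expect the main obstacle to be making the ``block by block / locality of bracket cancellation'' step fully rigorous: one must verify that when the $e$-word is cut into the content-indexed blocks, a factor $RA$ cancelled in $w_i^e(\bla)$ either lies within a single block or, if it straddles two blocks, its cancellation does not change which block contains the final rightmost surviving $A$. The cleanest way is probably the classical lemma that in the bracketing of a word $uv$, the unmatched part is obtained by first reducing $u$ and $v$ separately and then matching the result, applied inductively across the blocks from left to right; combined with the fact that unmatched $R$'s come before unmatched $A$'s within each reduced block, one sees the surviving $A$'s all pile up to the right, with the very last one being $\ga$'s letter and sitting in the $j$-block. Once locality is established, the identification $j=\cont(\ga)$ and the congruence $j\equiv i\pmod e$ are immediate, and the reverse direction (that a good addable $i$-node mod $e$ must be good in its content-block) is exactly the same bracketing bookkeeping. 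A sanity check against the running example with $e=3$ confirms the mechanism: the good $0$-node there has a definite content, and it is indeed good among nodes of that exact content.
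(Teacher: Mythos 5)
Your approach is essentially the paper's: factor $w_i^e(\bla)$ into contiguous content blocks $w_{j+ke}^\infty(\bla)$, $k\in\Z$, and compare the $RA$-cancellation of $w_i^e$ with the block-local cancellation of $w_j^\infty$. The paper's proof runs exactly this way (it even states the concatenation identity $w_i^e(\bla)=\prod_{k\in\Z}w_{i+ke}^\infty(\bla)$ and then argues by contradiction in two steps: first that the $A$ encoding $\ga$ cannot cancel inside $w_j^\infty$ without cancelling in $w_i^e$, then that no $A$ can survive to its right in $\hat w_j^\infty$ without also surviving in $\hat w_i^e$). The locality/factorisation principle you isolate --- reduce each block first, then reduce the concatenation --- is indeed the engine, and is standard.

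One slip worth correcting: you write that ``unmatched $R$'s come before unmatched $A$'s within each reduced block'' and that ``the surviving $A$'s all pile up to the right''. In this paper the cancellable factor is $RA$, so a reduced block has the form $A^{\alpha}R^{\beta}$: unmatched $A$'s precede unmatched $R$'s, and the surviving $A$'s sit at the \emph{left} of the block. The correct bookkeeping is then: when the reduced blocks $A^{\alpha_k}R^{\beta_k}$ are concatenated, cross-block cancellation consumes the \emph{leftmost} $A$'s of each block, so the globally surviving $A$'s in block $k$ form a \emph{suffix} of its $\alpha_k$ local survivors; hence if $\ga$'s $A$ survives globally and is the global rightmost $A$, it must be the rightmost among the local survivors of its own block, i.e. the good addable $j$-node for $\Uinf$. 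With that sign corrected, your plan coincides with the paper's proof.
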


\begin{proof}
Denote by $w_i^e(\bla)$ (resp. $w_i^\infty(\bla)$) the $i$-word for $\bla$ with respect to the $\Ueprime$-crystal (resp. $\Uinf$-crystal) structure.
Then $w_i^e(\bla)$ is the concatenation of $i$-words for the $\Uinf$-crystal structure, precisely
 \begin{equation}\label{iwords} w_i^e(\bla) = \prod_{k\in\Z} w_{i+ke}^\infty (\bla). \end{equation}

We further denote $\hat{w}_i^e(\bla)$ and $\hat{w}_i^\infty(\bla)$ the reduced $i$-words (that is, after recursive cancellation of the factors $RA$).
The node $\ga$ is encoded in both $w_i^e(\bla)$ and $w_j^\infty(\bla)$ by a letter $A$.
Now if this letter $A$ does not appear in $\hat{w}_j^\infty(\bla)$, this means that there is a letter $R$ in $w_j^\infty(\bla)$ which simplifies with this $A$.
Hence, because of (\ref{iwords}), this letter $R$ also appears in $w_i^e(\bla)$ and simplifies with the $A$ encoding $\ga$, and 
$\ga$ cannot be the good addable $i$-node of $\bla$ for the $\Ueprime$-crystal structure, whence a contradiction.
Thus $\ga$ produces a letter $A$ in $\hat{w}_j^\infty(\bla)$.

In fact, this letter $A$ is the rightmost one in $\hat{w}_j^\infty(\bla)$. 
Indeed, suppose there is another letter $A$ in $\hat{w}_j^\infty(\bla)$ to the right of the $A$ encoding $\ga$.
Then it also appears in $\hat{w}_i^e(\bla)$ at the same place (again because of Relation (\ref{iwords})).
This contradicts the fact that $\ga$ is the good addable $i$-node for the $\Ueprime$-crystal structure.

Therefore, $\ga$ is the good addable $j$-node of $\bla$ for the $\Uinf$-crystal structure.

\end{proof}

\begin{lem}\label{lemcompat2}
Let $i\in\llb 0, e-1 \rrb$, and let $\varphi$ be a $\Uinf$-crystal isomorphism.
Suppose there is an arrow $\bla \overset{i}{\lra} \bmu$ in the crystal graph $B_e(\bla)$,
and denote $\ga:=[\bmu]\backslash [\bla]$. 
Then 
\begin{enumerate}
\item there is an arrow $\varphi(\bla) \overset{i}{\lra} \bnu$ in the crystal graph $B_e(\varphi(\bla))$,
\item $\bnu=\tf_j^\infty(\varphi(\bla))$, where $j=\cont(\ga)$.
\end{enumerate}
\end{lem}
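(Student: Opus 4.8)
The plan is to lift the given $\Ueprime$-arrow to the $\Uinf$-crystal by means of Lemma \ref{lemcompat1}, push it through $\varphi$, and then come back down to the $\Ueprime$-crystal. First, by Lemma \ref{lemcompat1}, the arrow $\bla\overset{i}{\lra}\bmu$ in $B_e(\bla)$ produces an arrow $\bla\overset{j}{\lra}\bmu$ in $B_\infty(\bla)$, i.e. $\bmu=\tf_j^\infty(\bla)$ with $j=\cont(\ga)$. Since $\tf_j^\infty$ then acts non-trivially on $\bla$ and $\varphi$ is a $\Uinf$-crystal isomorphism, we get $\tf_j^\infty(\varphi(\bla))=\varphi(\tf_j^\infty(\bla))=\varphi(\bmu)$, which is non-zero because $\varphi$ sends vertices to vertices. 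Setting $\bnu:=\varphi(\bmu)$, this already yields assertion (2), namely $\bnu=\tf_j^\infty(\varphi(\bla))$, together with an arrow $\varphi(\bla)\overset{j}{\lra}\bnu$ in $B_\infty(\varphi(\bla))$.

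It remains to establish assertion (1), i.e. $\tf_i^e(\varphi(\bla))=\bnu$. The key point is that the $\Ueprime$-operator $\tf_i^e$, when it acts non-trivially, is itself read off from the $\Uinf$-crystal. Concretely, using the factorisation $w_i^e=\prod_{k\in\Z}w_{i+ke}^\infty$ of Relation (\ref{iwords}) (valid for any charged multipartition) and reducing this concatenation by recursive deletion of occurrences $RA$, one checks the following general fact: $\tf_i^e$ acts non-trivially on a multipartition if and only if at least one letter $A$ survives in $\hat w_i^e$; and in that case its good addable $i$-node coincides with its good addable $j_0$-node for $\Uinf$, where $j_0\equiv i\pmod e$ is the label of the factor $w_{i+ke}^\infty$ carrying the rightmost surviving $A$ of $\hat w_i^e$. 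By the signature rule for concatenation, whether some $A$ survives and the value of $j_0$ depend only on the numbers of surviving letters $A$ and $R$ in each reduced word $\hat w_{i+ke}^\infty=A^{\alpha_k}R^{\beta_k}$ — equivalently, on the largest $n$ with $(\tf_{i+ke}^\infty)^n$, resp. $(\te_{i+ke}^\infty)^n$, acting non-trivially. These integers are preserved by any $\Uinf$-crystal isomorphism, hence are the same at $\bla$ and at $\varphi(\bla)$. Consequently $\tf_i^e$ acts non-trivially on $\varphi(\bla)$ (because it does on $\bla$), and the associated $j_0$ is the same for $\bla$ and for $\varphi(\bla)$. Finally, Lemma \ref{lemcompat1} forces this common value to be $j$: the good addable $i$-node of $\bla$ has content $j$, hence is the good addable $j$-node of $\bla$ for $\Uinf$. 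Therefore $\tf_i^e(\varphi(\bla))=\tf_j^\infty(\varphi(\bla))=\bnu$, which is assertion (1).

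The main obstacle is exactly the general fact invoked in the second paragraph: pinning down precisely which factor of the concatenation (\ref{iwords}) carries the good addable $i$-node, and checking that this choice — and the fact of $\tf_i^e$ acting non-trivially at all — is governed solely by $\Uinf$-crystal data, so that it is transported unchanged by $\varphi$. This is in substance the content of the embedding $B_e(\bladot)\hookrightarrow B_\infty(\bladot)$ constructed in \cite[Section 4]{JaconLecouvey2010}; the work here is to isolate from that construction the precise invariance statement used above, rather than to re-derive the bracketing combinatorics from scratch.
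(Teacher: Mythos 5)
Your proof is correct and follows essentially the same argument as the paper: both rest on the factorisation $w_i^e=\prod_k w_{i+ke}^\infty$, on the $\varphi$-invariance of the exponents in $\hat w_{k}^\infty=A^{\alpha_k}R^{\beta_k}$ (the paper's relation (\ref{iwords1})), and on Lemma \ref{lemcompat1} to identify the relevant $\Uinf$-label as $j$. The only difference is organisational: you settle assertion (2) first via the $\Uinf$-equivariance of $\varphi$ and then reconcile it with $\tf_i^e$, whereas the paper proves assertion (1) first by concluding $\hat w_i^e(\bla)=\hat w_i^e(\varphi(\bla))$ from the invariance of each factor, and derives (2) afterwards.
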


\begin{proof}
First, for all $k$, we have the following relation:
\begin{equation}\label{iwords1}
\hat{w}_k^\infty(\bla) = \hat{w}_k^\infty(\varphi(\bla)).
\end{equation}
Indeed, if $\hat{w}_k^\infty(\bla) = A^\al R^\be$, then $\al$ can be seen as the number of consecutive arrows labeled by $k$ in $B_\infty(\bla)$ starting from $\bla$,
and $\be$ as the number of consecutive arrows labeled by $k$ leading to $\bla$.
Subsequently, the integers $\al$ and $\be$ are invariant by $\varphi$, and the relation (\ref{iwords1}) is verified.
Hence, by concatenating, we get 
\begin{equation}\label{iwords1bis}
\prod_{k\in\Z} \hat{w}_{i+ke}^\infty(\bla) = \prod_{k\in\Z} \hat{w}_{i+ke}^\infty(\varphi(\bla)),
\end{equation}
and therefore \begin{equation}\label{iwords2}\hat{w}_i^e(\bla) = \hat{w}_i^e(\varphi(\bla)),\end{equation}
which proves the first point.

Besides, we know by Lemma \ref{lemcompat1} that $\tf_i^e$ acts like $\tf_j^\infty$ on $\bla$.
Together with (\ref{iwords1bis}), this implies that $\tf_i^e$ acts like $\tf_j^\infty$ on $\varphi(\bla)$.
In other terms, $\bnu = \tf_j^\infty(\varphi(\bla))$, and the second point is proved.
\end{proof}

\begin{prop}\label{propcompat}
Every $\Uinf$-crystal isomorphism is also a $\Ueprime$-crystal isomorphism.
\end{prop}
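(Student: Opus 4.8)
The plan is to reduce the statement of Proposition~\ref{propcompat} to the two technical lemmas that precede it, namely Lemma~\ref{lemcompat1} and Lemma~\ref{lemcompat2}. Let $\varphi : B(\bladot,\bs) \lra B(\bmudot,\br)$ be a $\Uinf$-crystal isomorphism. I must check the two conditions of Definition~\ref{crystaliso} for $\varphi$ regarded as a map of $\Ueprime$-crystals: first that it sends the highest weight vertex $\bladot$ of $B_e(\bladot,\bs)$ to the highest weight vertex $\bmudot$ of $B_e(\bmudot,\br)$, and second that it commutes with the $\Ueprime$-crystal operators $\tf_i^e$, $i\in\llb 0,e-1\rrb$, whenever these act non-trivially.

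For the second (and main) point, I would argue as follows. Fix $i\in\llb 0,e-1\rrb$ and suppose $\tf_i^e$ acts non-trivially on some vertex $\bla$ of $B_e(\bladot,\bs)$, say $\bla \overset{i}{\lra} \bmu$ in $B_e$, and set $\ga := [\bmu]\backslash[\bla]$ and $j := \cont(\ga)$. By Lemma~\ref{lemcompat1}, this same arrow is an arrow $\bla \overset{j}{\lra}\bmu$ in $B_\infty(\bladot)$, i.e.\ $\bmu = \tf_j^\infty(\bla)$; since $\varphi$ is a $\Uinf$-crystal isomorphism, $\varphi(\bmu) = \tf_j^\infty(\varphi(\bla))$. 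On the other hand, Lemma~\ref{lemcompat2} gives that $\tf_i^e$ acts non-trivially on $\varphi(\bla)$, producing an arrow $\varphi(\bla)\overset{i}{\lra}\bnu$ in $B_e$ with $\bnu = \tf_j^\infty(\varphi(\bla))$. Comparing the two displays, $\varphi(\tf_i^e(\bla)) = \varphi(\bmu) = \tf_j^\infty(\varphi(\bla)) = \bnu = \tf_i^e(\varphi(\bla))$, which is exactly condition~2 of Definition~\ref{crystaliso}. (Strictly, one should also note the symmetric statement, that if $\tf_i^e$ acts non-trivially on $\varphi(\bla)$ then it acts non-trivially on $\bla$; this follows by applying the same reasoning to $\varphi^{-1}$, which is again a $\Uinf$-crystal isomorphism, or directly from Relation~(\ref{iwords2}) in the proof of Lemma~\ref{lemcompat2}, which shows $\hat{w}_i^e(\bla) = \hat{w}_i^e(\varphi(\bla))$ so that $\bla$ has a good addable $i$-node if and only if $\varphi(\bla)$ does.)

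For the first point, I would observe that the highest weight vertex of a connected component for the $\Ueprime$-crystal structure is characterised as the unique vertex with no good removable $i$-node for any $i\in\llb 0,e-1\rrb$. By Relation~(\ref{iwords2}) (equivalently, by concatenating the equalities $\hat{w}_k^\infty(\bla) = \hat{w}_k^\infty(\varphi(\bla))$ over $k$ in a fixed residue class mod $e$, as in~(\ref{iwords1bis})), $\bla$ has a good removable $i$-node if and only if $\varphi(\bla)$ does, for every $i$. Hence $\varphi$ maps the $\Ueprime$-highest weight vertex of the source to a $\Ueprime$-highest weight vertex of the target; since $\varphi$ restricts to a bijection of connected components of the $\Uinf$-crystal graphs and every $\Ueprime$-connected component sits inside a $\Uinf$-connected component, this image is the highest weight vertex $\bmudot$ of $B_e(\bmudot,\br)$. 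I should also remark that the underlying map is genuinely a graph isomorphism: it is a bijection on vertices (being a $\Uinf$-crystal isomorphism) and, by the commutation just proved together with Theorem~\ref{thmcrystaloperators}, it sends $i$-arrows to $i$-arrows in both directions.

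The only real subtlety — and the step I expect to need the most care — is the bookkeeping in the first point: making sure that ``no good removable $i$-node in the $\Ueprime$-sense'' is correctly matched across $\varphi$, since an $i$-node for $\Ueprime$ corresponds to a whole family of $j$-nodes with $j\equiv i \pmod e$ for $\Uinf$, and the reduced word $\hat{w}_i^e$ is obtained by concatenating the $\hat{w}_j^\infty$ in the correct order dictated by Relation~(\ref{iwords}). Once one trusts that $\hat{w}_i^e(\bla) = \hat{w}_i^e(\varphi(\bla))$ — which is precisely what~(\ref{iwords1bis}) and~(\ref{iwords2}) deliver — both points follow formally, and the proof is short.
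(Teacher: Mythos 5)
Your proof is correct and follows essentially the same approach as the paper's: the core argument — establishing the commutation $\varphi\circ\tf_i^e=\tf_i^e\circ\varphi$ by combining Lemma~\ref{lemcompat1} and Lemma~\ref{lemcompat2} via the chain $\varphi(\tf_i^e(\bla))=\varphi(\tf_j^\infty(\bla))=\tf_j^\infty(\varphi(\bla))=\tf_i^e(\varphi(\bla))$ — is identical to the paper's. The one thing you add, and the paper silently skips, is the explicit verification of condition~1 of Definition~\ref{crystaliso} (that $\varphi(\bladot)=\bmudot$) together with the remark that non-trivial action of $\tf_i^e$ on $\bla$ and on $\varphi(\bla)$ are equivalent; both follow cleanly from $\hat{w}_i^e(\bla)=\hat{w}_i^e(\varphi(\bla))$, as you note, so your write-up is a slightly more complete version of the same argument rather than a different one.
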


\begin{proof}
The fact that $\varphi$ is a $\Uinf$-crystal isomorphism is encoded in the following diagram:

$$ \xymatrix@!0 @R=1.5cm @C=2cm{
  \bla  \ar[r]^-{\varphi}\ar[d]_-{\tf_j^\infty}
&
  \varphi(\bla) 
  \ar[d]^-{\tf_j^\infty} 
\\
    \bmu \ar[r]^-{\varphi}
&
   \varphi(\bmu)
  }$$

The first point of Lemma \ref{lemcompat2} tells us that we have:

$$ \xymatrix@!0 @R=1.5cm @C=2cm{
  \bla  \ar[r]^-{\varphi}\ar[d]_-{\tf_i^e}
&
  \varphi(\bla) 
  \ar[d]^-{\tf_i^e} 
\\
    \bmu
&
   \bnu 
  }$$

Besides,  $$\begin{array}{ccll} \bnu & =  & \tf_j^\infty(\varphi(\bla)) & \text{\quad by Point 2 of Lemma \ref{lemcompat2} } \\
                                & = &  \varphi(\tf_j^\infty(\bla)) & \text{\quad because $\varphi$ is a $\Uinf$-crystal isomorphism} \\
                                & =  & \varphi(\bmu). & 
           \end{array}$$

Hence we can complete the previous diagram in 

$$ \xymatrix@!0 @R=1.5cm @C=2cm{
  \bla  \ar[r]^-{\varphi}\ar[d]_-{\tf_i^e}
&
  \varphi(\bla) 
  \ar[d]^-{\tf_i^e} 
\\
    \bmu \ar[r]^-{\varphi}
&
   \varphi(\bmu)
  }$$
which illustrates the commutation between $\tf_i^e$ and $\varphi$.
\end{proof}

\medskip

As a consequence, the two particular $\Uinf$-crystal isomorphisms $\rs$ and $\chi_\si$, defined respectively in Section \ref{rs} and \ref{rmatrix},
are $\Ueprime$-crystal isomorphisms (for all values of $e\in\N_{>1}$).

\subsection{The cyclage isomorphism}

\label{cyclage}

One of the most natural $\Ueprime$-crystal isomorphisms to determine is the so-called \textit{cyclage} isomorphism.
For $\bs=(s_1,\dots,s_l)$, let $\bs':=(s_{l}-e,s_1,\dots,s_{l-1})$.
Then the following result is easy to show 
(see for instance \cite[Proposition 5.2.1]{JaconLecouvey2010}, or \cite[Proposition 3.1]{Jacon2007} for the simpler case $l=2$):

\begin{prop}\label{propcyclage}	
The map $$\begin{array}{cccc}
           \xi : & B(\bladot,\bs) & \lra & B(\cF_{\bs'}) \\
                 & (\la^1,\dots,\la^l)  & \longmapsto & (\la^l,\la^1,\dots,\la^{l-1})
          \end{array}$$
is a $\Ueprime$-crystal isomorphism. It is called the \textit{cyclage} isomorphism.
\end{prop}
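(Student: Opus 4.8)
The plan is to check the two conditions of Definition~\ref{crystaliso} by hand, the substance of the matter being condition~2, the commutation with the operators $\tf_i$. Let $\sigma$ be the cyclic permutation of $\llb 1,l\rrb$ with $\sigma(c)=c+1$ for $c<l$ and $\sigma(l)=1$, so that $\xi$ moves the partition in component $c$ of $\bla$ to component $\sigma(c)$ of $\xi(\bla)$, the multicharge $\bs'$ being built precisely so that $s'_{\sigma(c)}=s_c$ for $c<l$ and $s'_{\sigma(l)}=s'_1=s_l-e$. For a node $\gamma$ of $\bla$ in component $c$, write $\gamma^\flat$ for the corresponding node of $\xi(\bla)$ (same row and column, component $\sigma(c)$); then $\gamma\mapsto\gamma^\flat$ is a bijection between the nodes of $\bla$ and those of $\xi(\bla)$ that visibly preserves the property of being addable, and that of being removable, since $\xi$ only shuffles the components. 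As the whole behaviour of the crystal operators (Theorem~\ref{thmcrystaloperators}) is dictated, for each $i$, by the $i$-word $w_i$ --- read off from the addable and removable $i$-nodes sorted by $\prec_\bs$ --- it is enough to show that $\gamma\mapsto\gamma^\flat$ preserves residues and is order-preserving when restricted to $i$-nodes.

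I would first dispose of the residue statement, which is immediate: for $c<l$ one has $\cont_{\xi(\bla)}(\gamma^\flat)=\cont_\bla(\gamma)$ verbatim, and for $c=l$ one has $\cont_{\xi(\bla)}(\gamma^\flat)=\cont_\bla(\gamma)-e$; in both cases $\res_{\xi(\bla)}(\gamma^\flat)=\res_\bla(\gamma)$. Hence, for every $i$, the map $\gamma\mapsto\gamma^\flat$ restricts to a bijection from the addable and removable $i$-nodes of $\bla$ onto those of $\xi(\bla)$.

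The crux --- and the only point I expect to need genuine care --- is compatibility with $\prec$. Fix $i$ and two addable or removable $i$-nodes $\gamma,\gamma'$ of $\bla$, in components $c,c'$ and of contents $x,y$; since $x\equiv y\equiv i\pmod e$, the difference $x-y$ is a multiple of $e$. If $c,c'<l$ the contents are untouched and $\sigma$ is increasing on $\{1,\dots,l-1\}$, so $\prec$ is plainly preserved; if $c=c'=l$ both contents drop by $e$ and the tie-breaking clause of $\prec$ is vacuous, so again nothing changes. This leaves the case $c<l=c'$ (the reverse being symmetric): in $\bla$, since $c<c'=l$, the clause ``$c>c'$'' is never triggered, so $\gamma\prec_\bs\gamma'\iff x<y\iff x-y\le-e$; in $\xi(\bla)$, $\gamma^\flat$ sits in component $\sigma(c)\ge2$ and ${\gamma'}^\flat$ in component $1$, with contents $x$ and $y-e$ respectively, so now the clause ``$\sigma(c)>1$'' is in force and $\gamma^\flat\prec_{\bs'}{\gamma'}^\flat\iff x\le y-e\iff x-y\le-e$. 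The two conditions are identical --- and this is exactly where the congruence $x\equiv y\pmod e$ is spent, to absorb the shift by $e$ into the change of tie-break. Consequently $w_i(\bla)=w_i(\xi(\bla))$ as words in $A$ and $R$, hence $\hat{w}_i(\bla)=\hat{w}_i(\xi(\bla))$, and $\gamma\mapsto\gamma^\flat$ carries the good addable (resp.\ good removable) $i$-node of $\bla$ to the good addable (resp.\ good removable) $i$-node of $\xi(\bla)$. By Theorem~\ref{thmcrystaloperators} this gives $\xi\circ\tf_i=\tf_i\circ\xi$ and $\xi\circ\te_i=\te_i\circ\xi$ whenever the operator involved acts nontrivially, which is condition~2.

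Finally I would derive condition~1. Writing $\bladot=\te_{i_p}\cdots\te_{i_1}(\bla)$, the commutation above gives $\xi(\bladot)=\te_{i_p}\cdots\te_{i_1}(\xi(\bla))$, so $\xi(\bladot)$ lies in the connected component containing $\xi(\bla)$; and applying the previous paragraph to the multipartition $\bladot$ yields $\hat{w}_j(\bladot)=\hat{w}_j(\xi(\bladot))$ for all $j$, so $\xi(\bladot)$ has no good removable node and is therefore the highest weight vertex of that component. Thus $\xi$ is a well-defined map $B(\bladot,\bs)\to B(\xi(\bladot),\bs')$ sending highest weight vertex to highest weight vertex and intertwining the $\tf_i$, i.e.\ a $\Ueprime$-crystal isomorphism (injectivity being clear, and surjectivity onto $B(\xi(\bladot),\bs')$ automatic from condition~2), as claimed.
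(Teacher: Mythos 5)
Your proof is correct. The paper does not actually supply its own argument for this proposition---it labels the result ``easy to show'' and points the reader to \cite[Proposition 5.2.1]{JaconLecouvey2010} (and to \cite[Proposition 3.1]{Jacon2007} for $l=2$)---so there is nothing internal to compare against, but your direct verification is exactly the natural route: track the componentwise shuffle $\gamma\mapsto\gamma^\flat$, observe that it preserves the addable/removable property and the residue (the component-$l$ content drops by $e$, which vanishes modulo $e$), and check that it is order-preserving with respect to $\prec$. The only delicate point, the mixed case $c<l=c'$ and its mirror, is handled correctly: you correctly spend the congruence $x\equiv y\pmod e$ to trade the strict inequality plus vanishing tie-break on the $\bs$ side for the weak inequality plus active tie-break on the $\bs'$ side, so that $w_i(\bla)$ and $w_i(\xi(\bla))$ coincide letter for letter. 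The deduction of condition~1 from condition~2 by pushing the string of $\te_{i_k}$'s through $\xi$ and rereading the $i$-words at $\bladot$ is also sound.
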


Therefore, in the sequel, we denote $\xi(\bs):=(s_{l}-e,s_1,\dots,s_{l-1})$ and $\xi(\bla):=(\la^l,\la^1,\dots,\la^{l-1})$.

\medskip

\begin{rem}\label{remcyclage0}
Actually, we have more than this.
Indeed, the map $\xi$ is clearly invertible.
Hence, because $\te_i\circ\tf_i = \tf_i\circ\te_i = \Id$ whenever they act non trivially,
we have $$\begin{array}{rccc}
         & \tf_i \circ \xi & = & \xi \circ \tf_i \\
         \text{i.e.} \quad  & \xi & = & \te_i \circ \xi \circ \tf_i \\
         \text{i.e.} \quad & \xi \circ \te_i & = & \te_i \circ \xi.
        \end{array}$$
        
Hence, $\te_i$ and $\xi$ also commute.

\end{rem}

\medskip

\begin{rem}
Note that in \cite{JaconLecouvey2010}, the cyclage is defined slightly differently, namely by 
$$\begin{array}{cccc}
   \ze : & B(\bladot,\bs) & \lra & B(\cF_{(s_2,s_3,\dots, s_l, s_{1+e})}) \\
            &      (\la^1,\dots,\la^l)  & \longmapsto & (\la^2,\dots,\la^{l},\la^1).
  \end{array}$$
It is easy to see that both definitions are equivalent. Indeed, one recovers $\xi$ by:
\begin{enumerate}
\item applying $l-1$ times $\ze$,
\item translating all components of the multicharge $\ze^{l-1}(\bs)$ by $-e$ 
(which is a transformation that has clearly no effect on the multipartition).
\end{enumerate}

\end{rem}

\medskip

\begin{rem}\label{remcyclage1}
Note that the cylindricity condition defined in Definition \ref{defflotw} is conveniently expressible in terms of symbols using the cyclage $\xi$.
Precisely, $|\bla,\bs\rangle$ is cylindric if and only if the three following conditions are verified:
\begin{enumerate}
 \item $\bs\in\sS_e$,
 \item $\fB_\bs(\bla)$ is semistandard,
\item $\fB_{\xi(\bs)}(\xi(\bla))$ is semistandard.
\end{enumerate}
 
\end{rem}

\begin{rem}\label{remcyclage2}
We will see (Remark \ref{remisopsi2}) that the canonical isomorphism we aim to determine can be naturally regarded as a generalisation of this cyclage isomorphism...
\end{rem}

Finally, it is straightforward from the definition of $\xi$ that the following property holds:

\begin{prope}\label{propertycyclage}
For all $\bla\in\cF_\bs$, we have $|\xi(\bla)|=|\bla|$.
\end{prope}

\subsection{Finding a cylindric equivalent multipartition}

\label{algorithm}

In this section, we make use of the $\Ueprime$-crystal isomorphisms $\rs$ (see Section \ref{rs}) and $\xi$ (defined in Proposition \ref{propcyclage})
to construct an algorithm which associates to any charged multipartition $|\bla,\bs\rangle$ an equivalent charged multipartition $|\bmu,\br\rangle$ 
which is cylindric (see Definition \ref{defflotw}).
In the sequel, we will denote by $\sC_\bs$ the subset of $\cF_\bs$ of cylindric $l$-partitions.
In particular, this implies that $\bs\in\sS_e$.
First of all, let us explain why restricting ourselves to cylindric multipartitions is relevant.

\begin{prop}\label{propstability} Let $\bs\in\sS_e$.
The set $\sC_\bs$ is stable under the action of the crystal operators.
\end{prop}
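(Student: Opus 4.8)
The goal is to show that if $\bs\in\sS_e$ and $|\bla,\bs\rangle$ is cylindric, then $\te_i(\bla)$ and $\tf_i(\bla)$ are cylindric whenever they are nonzero. Since $\te_i$ and $\tf_i$ are mutually inverse when both act nontrivially, it suffices to check stability under $\tf_i$: if $\bmu=\tf_i(\bla)\neq 0$, then $\bla=\te_i(\bmu)$, so stability under $\tf_i$ on all of $\sC_\bs$ automatically gives stability under $\te_i$ as well. So I would fix $i\in\llb 0,e-1\rrb$, assume $|\bla,\bs\rangle\in\sC_\bs$, set $\bmu=\tf_i(\bla)$ and $\ga=(a,b,c)$ the good addable $i$-node, and prove $|\bmu,\bs\rangle$ is cylindric. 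Note $\bs$ is unchanged, so the condition $\bs\in\sS_e$ is automatic; what must be checked is the first bullet of Definition \ref{defflotw} (the "cylindricity inequalities"), equivalently, by Remark \ref{remcyclage1}, that both $\fB_\bs(\bmu)$ and $\fB_{\xi(\bs)}(\xi(\bmu))$ are semistandard.

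\textbf{Key steps.} First I would reformulate cylindricity of $|\bla,\bs\rangle$ as a system of inequalities $\la_a^c \ge \la_{a+s_{c+1}-s_c}^{c+1}$ for $1\le c\le l-1$ and $\la_a^l \ge \la_{a+e+s_1-s_l}^1$. Equivalently (and this is the cleaner bookkeeping device), cylindricity says: whenever $(a,b,c)$ is a node of $\bla$, the corresponding "shifted" position in the next component — namely $(a-s_{c+1}+s_c, b, c+1)$ if $c<l$, or $(a-e-s_1+s_l,b,1)$ if $c=l$ — is also a node of $\bla$, provided its row index is $\ge 1$. The point is that a node of content $\kappa$ in component $c$ forces a node of the same content $\kappa$ in the "next" component in the cyclic order $1\to 2\to\dots\to l\to 1$ (with the charge on component $1$ dropped by $e$ after wrapping around). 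Since $\te_i$ and $\tf_i$ only add/remove $i$-nodes, i.e. nodes whose content is $\equiv i \pmod e$, and the cylindricity relations link nodes of equal content, the natural strategy is: if $\ga=(a,b,c)$ is added to get $\bmu$, I must check (i) the node forced by $\ga$ in the next component is present in $\bmu$, and (ii) $\ga$ is not itself forced by a node in the previous component that is absent. For (i): the node in the next component with the same content as $\ga$ is either already in $\bla$ (fine) or must itself be an addable $i$-node; using the ordering $\prec_\bs$ and the structure of the $i$-word, I would argue that this addable node lies to the \emph{left} of $\ga$ in the $i$-word, hence cannot be the good one unless $\ga$'s forced node is already present — more precisely, I'd show that if $\ga$ is the rightmost $A$ surviving in $\hat w_i(\bla)$ then every addable $i$-node forced by cylindricity to precede the existence of $\ga$ is already filled. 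For (ii), the symmetric argument: the node forcing $\ga$ sits to the right in the $i$-word, and if it were absent, $\ga$ being good would be contradicted. Concretely I would phrase both using the translation-by-content picture and the fact (Remark \ref{remcyclage1}) that cylindricity $=$ semistandardness of $\fB_\bs$ \emph{and} of $\fB_{\xi(\bs)}(\xi(\cdot))$, then invoke the known fact (Corollary \ref{corrs} / Theorem \ref{thmflotw}) that the crystal operators preserve semistandardness of symbols in the $e=\infty$ sense, and apply it twice — once to $\bs$ and once to $\xi(\bs)$ — noting that $\xi$ commutes with $\te_i,\tf_i$ (Remark \ref{remcyclage0}, Proposition \ref{propcyclage}) so that $\xi(\tf_i\bla)=\tf_i(\xi\bla)$.

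\textbf{The cleanest route.} Actually the slick proof is the last one: by Remark \ref{remcyclage1}, $|\bla,\bs\rangle\in\sC_\bs$ iff $\bs\in\sS_e$, $\fB_\bs(\bla)$ is semistandard, and $\fB_{\xi(\bs)}(\xi(\bla))$ is semistandard. The set of multipartitions with semistandard symbol (for a fixed increasing charge) is exactly the image of $\rs$, which by Corollary \ref{corrs} is a union of connected components of $B(\cF_\bs)$ in the $\Uinf$-crystal; and by Proposition \ref{propcompat} such components are stable under the $\Ueprime$-operators $\te_i,\tf_i$. Hence $\fB_\bs(\tf_i\bla)$ is semistandard. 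Applying the same to the charge $\xi(\bs)\in\sS_e$ (still increasing up to the allowed $e$-gap — here one must check $\xi(\bs)$ lies in an appropriate increasing range, which holds since $\bs\in\sS_e$) and using $\xi\tf_i=\tf_i\xi$, we get that $\fB_{\xi(\bs)}(\xi(\tf_i\bla))=\fB_{\xi(\bs)}(\tf_i(\xi\bla))$ is semistandard. Therefore $|\tf_i\bla,\bs\rangle\in\sC_\bs$, and by the inverse remark $\te_i$ preserves $\sC_\bs$ too.

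\textbf{Main obstacle.} The delicate point is matching up the two notions of "semistandard symbol": the $\Uinf$-crystal-stability argument requires the charge to be weakly increasing, and $\sS_e$ only guarantees $0\le s_c-s_{c'}<e$ for $c'<c$, not $s_1\le\dots\le s_l$. So I expect the real work to be (a) justifying that semistandardness of $\fB_\bs(\bla)$ for $\bs\in\sS_e$ still places $\bla$ in a $\Uinf$-crystal component that is $\tf_i$-stable — perhaps by first reducing to the increasing-charge case via a permutation isomorphism $\chi_\si$ (Property \ref{propertyrmatrix}), which commutes suitably with the operators — and (b) checking that $\xi(\bs)=(s_l-e,s_1,\dots,s_{l-1})$ is again in the range where the semistandardness-preservation applies, i.e. that the induced ordering on $\xi(\bs)$ is compatible. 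Once these bookkeeping compatibilities are nailed down, the statement follows formally; the combinatorial heart is entirely contained in the already-cited Corollary \ref{corrs} and Proposition \ref{propcompat}.
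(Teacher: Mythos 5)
Your proposal is correct and follows the same skeleton as the paper's proof: reformulate cylindricity via Remark \ref{remcyclage1} as semistandardness of both $\fB_\bs(\bla)$ and $\fB_{\xi(\bs)}(\xi(\bla))$, show that semistandardness is preserved by $\te_i$ and $\tf_i$, then invoke the commutation of $\xi$ with the crystal operators (Proposition \ref{propcyclage}, Remark \ref{remcyclage0}). The one point of genuine divergence is how the middle step is established. The paper gives a short local argument directly on the symbol: the good addable $i$-node is encoded by an entry $j+p$ with no entry sitting below it in its column, so replacing $j+p$ by $j+p+1$ keeps the columns weakly increasing and the rows strictly increasing, and dually for removal. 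Your \emph{cleanest route} instead leans on the $\Uinf$ infrastructure: since $\rs$ is a $\Uinf$-crystal isomorphism (Corollary \ref{corrs}) and fixes precisely the multipartitions with semistandard symbol, it acts as the identity on any connected $\Uinf$-component containing one of them, so the semistandard locus is a union of $\Uinf$-components; and because every $\Ueprime$-crystal arrow is also a $\Uinf$-crystal arrow, that union is $\Ueprime$-stable. The correct reference for this last step is really Lemma \ref{lemcompat1} rather than Proposition \ref{propcompat} (the proposition is about morphisms, the lemma about arrows, and what you need is the arrow statement), but that is a minor citation slip. Both routes are valid; the paper's is more self-contained, yours gains leverage from the machinery already developed.

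Your \emph{Main obstacle} paragraph is a phantom worry and can be deleted. The set $\sS_e$ is defined by $0\leq s_c-s_{c'}<e$ for $c'<c$, which already entails $s_1\leq\dots\leq s_l$, so no reordering via $\chi_\si$ is needed. Likewise $\xi(\bs)=(s_l-e,s_1,\dots,s_{l-1})$ is automatically weakly increasing when $\bs\in\sS_e$, since $s_l-s_1<e$ gives $s_l-e<s_1$; that is all the semistandardness-preservation argument requires (it does not need $\xi(\bs)\in\sS_e$, only that the charge be weakly increasing).
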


\begin{proof} 

Let $\bla\in\sC_\bs$.
By Remark \ref{remcyclage1}, we know that $\fB_\bs(\bla)$ is semistandard and $\fB_{\xi(\bs)}(\xi(\bla))$ is semistandard.

It is easy to see that $\fB_\bs(\tf_i(\bla))$ (resp. $\fB_\bs(\te_i(\bla))$) is still semistandard, whenever $\tf_i$ (resp. $\te_i$) acts non trivially on $\bla$.
Indeed, denote $\ga$ the good addable $i$-node and let $j=\cont(\ga)$.
It is encoded by an entry $j+p$, where $p$ is the size of the symbol, see Section \ref{generalities}.
By definition of being a good node is the leftmost $i$-node amongst all $i$-node of content $j$.
Hence, there is no other no entry below the entry $j+p$ encoding $\ga$.
Since $\te_i$ just turns this $j+p$ into $j+p+1$, the symbol of $\tf_i(\bla)$ is still semistandard.
The similar argument applies to $\te_i(\bla)$.

Since the symbol of $\xi(\bla)$ is semistandard, by the same argument as above, we deduce that the symbol of $\tf_i(\xi(\bla))$ is still semistandard,
i.e. the symbol of $\xi(\tf_i(\bla))$ is semistandard, since $\xi$ commutes with $\tf_i$ (by Proposition \ref{propcyclage}).
This result also holds for $\xi(\te_i(\bla))$ because $\xi$ also commutes with $\te_i$ (see Remark \ref{remcyclage0}).

By Remark \ref{remcyclage1}, this means that $\tf_i(\bla)$ and $\te_i(\bla)$ are both cylindric.

\end{proof}

\medskip

The algorithm expected can now be stated.
Firstly, if $\fB_\bs(\bla)$ is not semistandard, then we can apply $\rs$ to get a charged multipartition whose symbol is semistandard.
Hence we can assume that $\fB_\bs(\bla)$ is semistandard. In particular, this implies that $s_c \leq s_{c+1}$ for all $c\in\llb 1, l-1 \rrb$.

Then,

\begin{enumerate}
 \item If $s_l-s_1<e$, then:
 \begin{enumerate}
 \item if $\fB_{\xi(\bs)}(\xi(\bla))$ is semistandard, then $|\bla,\bs\rangle$ is cylindric, hence we stop and take $\bmu=\bla$ and $\br=\bs$.
 \item if $\fB_{\xi(\bs)}(\xi(\bla))$ is not semistandard, then put $\bla \leftarrow \rs(\xi(\bla))$ and $\bs \leftarrow \rs(\xi(\bs))$ and start again.
 \end{enumerate}
 \item If $s_l-s_1 \geq e$, then put  $\bla \leftarrow\rs(\xi(\bla))$ and $\bs \leftarrow \rs(\xi(\bs))$ and start again.
\end{enumerate}

\begin{prop}\label{propalgorithm}
 The algorithm above terminates.
\end{prop}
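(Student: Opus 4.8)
The plan is to exhibit a quantity that strictly decreases at each iteration of the loop, so that the algorithm cannot run forever. The natural candidate is the rank $|\bla|$ of the current multipartition, controlled via Properties \ref{propertyrs} and \ref{propertycyclage}. First I would note that the map $\xi$ preserves rank (Property \ref{propertycyclage}), so after one application of the cyclage the rank is unchanged; the only place the rank can change is when we apply $\rs$. By Property \ref{propertyrs}, provided the underlying multicharge is non-decreasing, $|\rs(\bmu)| \leq |\bmu|$, with equality if and only if $\rs(\bmu) = \bmu$, i.e. the symbol is already semistandard. So each pass through the loop either strictly decreases $|\bla|$, or leaves it unchanged while replacing $\bla$ by $\rs(\xi(\bla))$.

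The main obstacle is exactly the equality case: what if the rank never drops? I would argue that it cannot stay constant forever. Suppose we are in a run of iterations during which the rank is constant. Then at each such step $\rs(\xi(\bla)) = \xi(\bla)$, meaning $\fB_{\xi(\bs)}(\xi(\bla))$ is \emph{already} semistandard before applying $\rs$; so effectively the step just replaces $|\bla,\bs\rangle$ by $|\xi(\bla),\xi(\bs)\rangle$, with semistandard symbol, and with $\xi(\bs)$ still non-decreasing (one needs to check: if $\bs \in \sS_e$ and the cyclaged symbol is semistandard, then $\xi(\bs) = (s_l - e, s_1, \dots, s_{l-1})$ is non-decreasing — this follows from the semistandardness of $\fB_{\xi(\bs)}(\xi(\bla))$, whose first condition forces $s_l - e \leq s_1$). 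Now track the multicharge: one application of $\xi$ sends $\bs = (s_1,\dots,s_l)$ to $(s_l - e, s_1, \dots, s_{l-1})$, so the sum $\sum_c s_c$ decreases by $e > 0$ at every rank-preserving step. But the multipartition stays fixed in rank, and there are only finitely many multipartitions of a given rank; meanwhile, repeatedly cyclaging and decreasing the charge sum without bound is incompatible with staying in $\sS_e$. More precisely, after $l$ consecutive rank-preserving cyclage steps the multicharge is $(s_1 - e, \dots, s_l - e)$, a global shift by $-e$, which represents the same charged multipartition up to the $\widehat{\fS_l}$-translation $y_1\cdots y_l^{-1}$... so I would instead phrase it as: a rank-preserving step strictly decreases $s_l - s_1$ is false, but it strictly decreases, say, the quantity $s_1$ (mod nothing) — hence cannot recur indefinitely while $\bs$ remains in $\sS_e$, because once $s_l - s_1 < e$ and the cyclaged symbol is semistandard we are in case 1(a) and the algorithm \textbf{stops}.

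Putting this together, I would organize the proof as follows. (1) Recall $|\xi(\bla)| = |\bla|$ and that each loop body applies at most one $\rs$, to a multipartition whose charge is non-decreasing, so $|\bla|$ is non-increasing along the run. (2) Since ranks are non-negative integers, after finitely many steps the rank stabilizes at some value $n_0$; consider the tail of the run where $|\bla| = n_0$ permanently. (3) In that tail, every step has $\rs(\xi(\bla)) = \xi(\bla)$ (equality case of Property \ref{propertyrs}), so $\fB_{\xi(\bs)}(\xi(\bla))$ is semistandard at each such step; in particular at the \emph{first} step of the tail, if moreover $s_l - s_1 < e$, the algorithm halts via case 1(a), contradiction — so we must be in case 2, $s_l - s_1 \geq e$. (4) Applying $\xi$ replaces $(s_1,\dots,s_l)$ by $(s_l - e, s_1, \dots, s_{l-1})$; I claim iterating this while in case 2 must eventually bring us to $s_l - s_1 < e$ with a semistandard cyclaged symbol (hence a halt). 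The cleanest way: after the step, the new charge gap is $\max - \min$ of $(s_l - e, s_1, \dots, s_{l-1})$; since $s_1 \leq \cdots \leq s_l$ and $s_l - e \geq s_1$ would be needed for this to stay ordered, and when it is not ordered the next $\rs$ in the following iteration reorders it, reducing the gap. The key numeric fact is that the gap $s_l - s_1$ is bounded below by $0$ and the construction forces it to decrease (or the algorithm to halt), which terminates the argument. The genuinely delicate point, and the one I would spend the most care on, is verifying that in the rank-stable tail the multicharge gap $s_l - s_1$ is forced to strictly decrease until it drops below $e$ — this is where the interplay between semistandardness of the cyclaged symbol and the reordering effect of $\rs$ must be pinned down precisely.
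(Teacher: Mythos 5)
Your high-level plan is right, and it matches the paper's: show the rank $|\bla|$ is non-increasing, and then control the rank-stable tail of the algorithm by a second statistic on the multicharge. But the execution has two genuine gaps, and you flag only one of them yourself.

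First, the claim in your step (3) — that in the rank-stable tail $\rs(\xi(\bla)) = \xi(\bla)$, and hence $\fB_{\xi(\bs)}(\xi(\bla))$ is semistandard — is incorrect. In case 2 one has $s_l - s_1 \geq e$, so $\xi(\bs) = (s_l - e, s_1,\dots,s_{l-1})$ has $s_l - e \geq s_1$, which means $\xi(\bs)$ is \emph{not} non-decreasing (unless $s_l - s_1 = e$ exactly). Property~\ref{propertyrs} does not apply to a multicharge that is not non-decreasing, and by definition $\fB_{\xi(\bs)}(\xi(\bla))$ cannot be semistandard since its multicharge is out of order. The paper closes this gap by observing that $\rs\circ\xi$ factors as $\rs\circ\chi\circ\xi$, where $\chi$ reorders the multicharge (and preserves rank, Property~\ref{propertyrmatrix}). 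It then applies Property~\ref{propertyrs} to $\chi(\xi(\bla))$, for which the hypothesis does hold; the equality case is then $\rs(\chi(\xi(\bla))) = \chi(\xi(\bla))$, not $\rs(\xi(\bla)) = \xi(\bla)$. Your argument does not mention $\chi$ at all, which is why it cannot correctly characterize the rank-stable steps.

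Second, the statistic you propose for the tail, the gap $s_l - s_1$, simply does not strictly decrease. Take $l=3$, $e=2$, $\bs = (0,4,4)$: after $\xi$ you get $(2,0,4)$, and after reordering $(0,2,4)$, so the gap stays $4$. You acknowledge this as "the genuinely delicate point," but it is more than delicate — the chosen quantity is the wrong one, and there is no fix with it alone. The paper's statistic is $\|\bs\| := \sum_{k=2}^{l}(s_k - s_1)$; a short computation using $s_l - e \geq s_1$ (so the new minimum is still $s_1$ after reordering) shows $\|\bs\|$ drops by exactly $e$ at every rank-preserving step in case 2, which — combined with $\|\bs\| \geq 0$ — finishes the proof. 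Without identifying some such well-chosen statistic and the $\chi$-factorization, your proof does not go through.
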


\begin{proof}
For a multicharge $\bs=(s_1,\dots s_l)$, we denote $||\bs||:= \sum_{k=2}^l (s_k-s_1)$.
Hence, if $\fB_\bs(\bla)$ is semistandard, we have $||\bs||\geq 0$. 
In particular, at each step in the algorithm, this statistic is always non-negative, since we replace $\bs$ by $\rs(\xi(\bs))$.

Suppose we are in case 1.(b). Since $s_l-s_1<e$ and $s_1\leq s_2\leq\dots\leq s_l$, we have 
$(s_l-e)<s_1\leq s_2 \leq \dots \leq s_{l-1}$. In other terms, the multicharge $\xi(\bs)=(s_l-e,s_1 , \dots, s_{l-1})$ is an increasing sequence.
Hence Property \ref{propertyrs} applies, and we have $|\rs(\xi(\bla))|<|\bla|$.

Suppose we are in case 2. 
The first thing to understand is that  we get the same multipartition and multicharge applying $\xi$ and $\rs$, 
or applying $\xi$, then $\chi$ (see Section \ref{rmatrix}) and $\rs$.
Indeed, $\chi$ just reorders the multicharge and gives the associated multipartition, which is a transformation already included in $\rs$,
(which gives a multipartition whose symbol is semistandard). 
Hence, we consider that $\rs(\xi(\bla))$ is obtained by applying successively $\xi$, then $\chi$, and finally $\rs$, to $\bla$.
In this procedure, it is possible that $\rs$ acts trivially (i.e. that $\chi(\xi(\bla))$ is already semistandard).
In fact,
\begin{itemize}
 \item If $\rs$ acts non trivially, then on the one hand $\chi(\xi(\bla))$ is non semistandard; 
and on the other hand $\chi(\xi(\bs))$ is an increasing sequence (by definition of $\chi$).
Thus, we have 
 $$\begin{array}{ccll} |\rs(\chi(\xi(\bla)))|  & <  & |\chi(\xi(\bla))|  & \text{ \quad applying Property \ref{propertyrs} }  \\
                                           & = &  |\bla| \hspace{0.82cm} & \text{ \quad by Properties \ref{propertyrmatrix} and \ref{propertycyclage} }.
   \end{array}$$
Hence in this case,  $|\rs(\xi(\bla))| < |\bla|$.
 \item If $\rs$ acts trivially, then this argument no longer applies.
 However, we have $|| \rs(\chi(\xi(\bs))) || = || \chi(\xi(\bs))) || < ||\bs ||$.
 Indeed, denote $\bs'= \chi(\xi(\bs))$. 
Since $s_l-s_1 \geq e $, we have $s_l-e \geq s_1$. 
This implies that the smallest element of $\xi(\bs)=(s_l-e,s_1,\dots,s_{l-1})$ is again $s_1$, and that $s'_1=s_1$.
Hence $$\begin{array}{cc>{\ds}l}  ||\bs'|| & =  & \sum_{k=2}^l (s'_k-s'_1) \\
                                & =   & \sum_{k=2}^l (s'_k- s_1) \\
                                &   = & \sum_{k=2}^{l-1} (s_k - s_1) + (s_l-e) - s_1 \\
                                & = & \sum_{k=2}^l (s_k-s_1) - e \\
                                &  = &  ||\bs || - e \\
                                & <  & ||\bs || \end{array}$$
Note also that in this case,  $|\rs(\xi(\bla))|= |\chi(\xi(\bla))|=|\bla|$ by Properties \ref{propertyrmatrix} and \ref{propertycyclage}.
\end{itemize}

We see that at each step, the rank $|.|$ can never increase.
In fact, since it is always non-negative, there is necessarily a finite number of steps at which this statistic decreases.
Moreover, when the rank does not increase, then the second statistic $||.||$ decreases.
Since it can never be negative (as noted in the beginning of the proof), there is also finite number of such steps.
In conclusion, there is a finite number of steps in the algorithm, which means that it terminates.

\end{proof}

\begin{rem}\label{remalgorithm}
This algorithm can also be stated in the simpler following way:
\begin{enumerate}
\item If $|\bla,\bs\rangle$ is cylindric, then stop and take $\bmu=\bla$ and $\br=\bs$.
\item Else, put $\bla \leftarrow \rs(\xi(\bla))$ and $\bs \leftarrow \rs(\xi(\bs))$ and start again.
\end{enumerate}

\medskip

\end{rem}

In other terms, we have proved that for each charged $l$-partition $|\bla,\bs\rangle$, there exists $m\in\N$ such that 
$((\rs\circ \xi)^m\circ \rs) \; (|\bla,\bs\rangle)$ is cylindric.
This integer $m$ a priori depends on $\bla$. The following proposition claims that it actually does not depend on $\bla$,
but only on the connected component $B(\bladot,\bs)$.

\begin{prop}\label{mconstant}
Let $B(\bladot,\bs)$ be a connected component of $B(\cF_\bs)$.
Then there exists $m\in\N$ such that for all $\bla\in B(\bladot,\bs)$,
\begin{itemize}
 \item $((\rs\circ \xi)^{m} \circ \rs) \; (|\bla,\bs\rangle)$ is cylindric, and
\item $((\rs\circ \xi)^{m'} \circ \rs) \; (|\bla,\bs\rangle)$ is not cylindric for all $m'<m$.
\end{itemize}
\end{prop}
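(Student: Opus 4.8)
The plan is to show that the number of steps needed to reach a cylindric multipartition is an invariant of the connected component. The key observation is that each elementary operation in the algorithm of Remark \ref{remalgorithm}, namely $\rs$ and $\xi$, is a $\Ueprime$-crystal isomorphism (for $\rs$ this is the combination of Corollary \ref{corrs} and Proposition \ref{propcompat}; for $\xi$ it is Proposition \ref{propcyclage}). Therefore, for each fixed $m\in\N$, the composite $(\rs\circ\xi)^m\circ\rs$ is a $\Ueprime$-crystal isomorphism from $B(\bladot,\bs)$ to some connected component $B(\bnudot,\bt)$ of some Fock space $\cF_\bt$, sending $\bla$ to a vertex we shall denote $\bla^{(m)}$ (and, crucially, sending the whole component to the whole target component, by the remark following Definition \ref{crystaliso}).

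First I would fix a connected component $B(\bladot,\bs)$ and run the algorithm on its highest weight vertex $\bladot$; let $m$ be the (finite, by Proposition \ref{propalgorithm}) number of $\rs\circ\xi$ steps it takes, so that $\bladot^{(m)}$ is cylindric but $\bladot^{(m')}$ is not cylindric for $m'<m$. I claim this same $m$ works for every $\bla\in B(\bladot,\bs)$. The central point is the following: a vertex of the Fock space is cylindric if and only if the highest weight vertex of its connected component is cylindric. Indeed, by Proposition \ref{propstability}, the set $\sC_\bt$ of cylindric multipartitions in $\cF_\bt$ (for $\bt\in\sS_e$) is stable under the crystal operators $\te_i$ and $\tf_i$; hence if any vertex of a connected component is cylindric, then so is its highest weight vertex (obtained by applying $\te_i$'s) and, conversely, so is every vertex of that component (obtained from the highest weight vertex by applying $\tf_i$'s). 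Since $\rs$ and $\xi$ are $\Ueprime$-crystal isomorphisms, the vertex $\bla^{(m)}$ lies in the connected component $B(\bladot^{(m)},\bt)$ whose highest weight vertex is precisely $\bladot^{(m)}$; so $\bla^{(m)}$ is cylindric if and only if $\bladot^{(m)}$ is cylindric.

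I would then assemble these facts: for the given $m$, the vertex $\bladot^{(m)}$ is cylindric, hence so is $\bla^{(m)}$ for every $\bla$ in the component; and for each $m'<m$, the vertex $\bladot^{(m')}$ is not cylindric, hence neither is $\bla^{(m')}$, by the same equivalence applied in the component $B(\bladot^{(m')},\bt')$. This is exactly the statement of the proposition. I also need to check one mild point to make the ``$m'<m$'' clause meaningful, namely that the algorithm does not terminate before step $m'$ for any intermediate $\bla$; but this too follows, since if $\bla^{(m')}$ were cylindric for some $m'<m$ then $\bladot^{(m')}$ would be cylindric, contradicting the choice of $m$. The only genuinely load-bearing ingredient is the equivalence ``vertex cylindric $\Leftrightarrow$ highest weight vertex of its component cylindric,'' which rests squarely on Proposition \ref{propstability}; everything else is a formal consequence of $\rs$ and $\xi$ being crystal isomorphisms. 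I would expect no serious obstacle beyond stating this equivalence cleanly and making sure the multicharges stay in $\sS_e$ throughout so that Proposition \ref{propstability} applies at each stage (which it does, since $\rs$ always outputs an increasing multicharge and, after the first $\rs$, the running multicharge is always of the required form whenever the running multipartition is cylindric).
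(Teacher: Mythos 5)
Your proof is correct and follows essentially the same route as the paper's: both run the algorithm on the highest weight vertex $\bladot$ to obtain $m$, then use the facts that $\rs$ and $\xi$ commute with the crystal operators together with Proposition \ref{propstability} to transfer cylindricity (and non-cylindricity for $m'<m$) from $\bladot^{(m)}$ to $\bla^{(m)}$. You package the argument as an explicit equivalence ``vertex cylindric $\Leftrightarrow$ highest weight vertex of its component cylindric,'' whereas the paper does the same thing by directly writing $\bla=\tf_{i_p}\cdots\tf_{i_1}(\bladot)$ and commuting, but the content is identical.
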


\begin{proof}
Because of Proposition \ref{propalgorithm}, we know that
for each $\bla\in B(\bladot,\bs)$, there exists $m(\bla)\in\N$ verifying this property. Write $m_0 = m(\bladot)$.
Now, take any $\bla\in B(\bladot,\bs)$, and write $\bla = \tf_{i_p} \dots \tf_{i_1} (\bladot)$.
Because $\rs$ and $\xi$ are crystal isomorphisms, they commute with the crystal operators $\tf_i$.
Hence
$$\begin{array}{rcl}
 ((\rs\circ\xi)^{m_0}\circ \rs) \; (|\bla,\bs\rangle) & = & ((\rs\circ\xi)^{m_0}\circ \rs) \; (\tf_{i_p} \dots \tf_{i_1} (|\bladot,\bs\rangle) ) \\
                       & = & \tf_{i_p} \dots \tf_{i_1} ( ((\rs\circ\xi)^{m_0}\circ \rs) \;(|\bladot,\bs\rangle) ) \\
                       & = & \tf_{i_p} \dots \tf_{i_1} (|\overset{\bullet}{\bmu},\br\rangle) \text{ \quad where $|\overset{\bullet}{\bmu},\br\rangle$ is cylindric} \\
                       & =: & |\bmu,\br \rangle \text{ \quad, which is cylindric because of Proposition \ref{propstability} }.
\end{array}$$
Moreover, is there exists $m'<m_0$ such that $((\rs\circ\xi)^{m'}\circ \rs) \; (|\bla,\bs\rangle)$ is cylindric, then by the same argument
$((\rs\circ\xi)^{m'}\circ \rs) \;(|\bladot,\bs\rangle)$ is cylindric, which contradicts the minimality of $m_0$.

Therefore $m(\bla)=m_0$.
\end{proof}

\newcommand{\moinstrois}{$-$3}
\newcommand{\moinsquatre}{$-$4}
\newcommand{\moinscinq}{$-$5}
\newcommand{\moinssix}{$-$6}
\newcommand{\moinssept}{$-$7}
\newcommand{\moinshuit}{$-$8}
\newcommand{\moinsneuf}{$-$9}
\newcommand{\moinsdix}{$-$10}
\newcommand{\moinsonze}{$-$11}
\newcommand{\moinsdouze}{$-$12}
\newcommand{\moinstreize}{$-$13}
\newcommand{\moinsquatorze}{$-$14}
\newcommand{\moinsquinze}{$-$15}

\setcounter{MaxMatrixCols}{20}

\begin{exa}\label{exaalgo}
Set $e=4$, $l=3$, $s=(0,9,5)$, and $\bla=(4.2^2.1^3,5.2^3.1^4,7^2.6.4^2.2^2.1^3) \in \cF_\bs$.
Firstly, we see that 
$$\fB_\bs(\bla)=
\begin{pmatrix}
 0 & 1 & 3 & 4 & 5 & 7 & 8 & 11 & 12 & 15 & 17 & 18 & & &  & \\
 0 & 1 & 2 & 3 & 4 & 5 & 6 & 7 & 9 & 10 & 11 & 12 &  14 & 15 & 16 & 20\\
  0 & 2 & 3 & 4 & 6 & 7 & 10 & & & & & & & &  &
\end{pmatrix}
$$ 
is not semistandard. Thus we first compute $\tbla:=\rs(\bla)$ and $\tbs:=\rs(\bs)$.
We obtain 
$$ \fB_{\tbs}(\tbla) =
\begin{pmatrix}
   0 & 1 & 2 & 3 & 4 & 5 & 6 & 7 & 8 & 10 & 11 & 12 & 14 & 15 & 16 & 17 & 18 \\
  0 & 1 & 3 & 4 & 5 & 7 & 9 & 11 & 12 & 15 & 20 & & & & & & \\
 0 & 2 & 3 & 4 & 6 & 7 & 10 & & & & & & & & & &
  \end{pmatrix},$$
i.e. $\tbla=(4.2^2.1^3,10.6.4^2.3.2.1^3,2^5.1^3)$ and $\tbs=(0,4,10)$.
We see that $|\tbla,\tbs\rangle$ is not cylindric.
Hence we compute $\bla^{(1)} := (\rs\circ \xi) (\bla)$ and $\bs^{(1)}:= (\rs\circ \xi) (\bla)$.
We get 
$$\fB_{\bs^{(1)}}(\bla^{(1)}) =
\begin{pmatrix}
 0 & 1 & 2 & 3 & 4 & 5 & 7 & 8 & 9 & 11 & 12 & 13 & 14 & 15 & 20 \\
 0 & 1 & 3 & 4 & 6 & 7 & 10 & 11 & 12 &&&&&& \\
 0 & 2 & 3 & 4 & 6 & 7 & 10 &&&&&&&&
\end{pmatrix},$$
i.e. $\bla^{(1)}=(4.2^2.1^3,4^3.2^2.1^2, 6.2^5.1^3)$ and $s^{(1)}=(0,2,8)$.

We keep on applying $\rs\circ\xi$ until ending up with a cylindric multipartition.
In fact, if we denote $\bla^{(k)}:= (\rs\circ\xi)^k(\bla)$ and $\bs^{(k)}:= (\rs\circ\xi)^k(\bs)$,
we can compute $|\bla^{(2)}, \bs^{(2)}\rangle$, $|\bla^{(3)}, \bs^{(3)}\rangle$, $|\bla^{(4)}, \bs^{(4)}\rangle$, and we finally have
$$\fB_{\bs^{(5)}}(\bla^{(5)})= 
\begin{pmatrix}
 0 & 1 & 2 & 3 & 4 & 6 & 7 & 9 & 11 & 12 \\
0 & 1 & 3 & 4 & 5 & 7 & 8 & 10 & 11 & 16 \\
0 & 2 & 3 & 4 & 6 & 7 & 10 &&& \\
\end{pmatrix},$$
i.e. $\bla^{(5)}=(4.2^2.1^3,7.3^2.2^2.1^3,3^2.2.1^2)$ and $\bs^{(5)}=(-4,-1,-1)$.
We see that $|\bla^{(5)}, \bs^{(5)}\rangle$ is cylindric.

This charged multipartition has the following Young diagram with contents:
$$|\bla^{(5)}, \bs^{(5)}\rangle = 
\left( \;\young(\moinsquatre\moinstrois\moinsdeux\moinsun,\moinscinq\moinsquatre,\moinssix\moinscinq,\moinssept,\moinshuit,\moinsneuf) \; 
, \; \young(\moinsun012345,\moinsdeux\moinsun0,\moinstrois\moinsdeux\moinsun,\moinsquatre\moinstrois,\moinscinq\moinsquatre,\moinssix,\moinssept,\moinshuit) \; 
, \; \young(\moinsun01,\moinsdeux\moinsun0,\moinstrois\moinsdeux,\moinsquatre,\moinscinq) \; \right).$$

With this representation, we see that $|\bla^{(5)}, \bs^{(5)}\rangle$ is not FLOTW (cf. Definition \ref{defflotw}).
Therefore, It remains to understand how to obtain an equivalent FLOTW multipartition from a cylindric multipartition.
This is the point of the next section, which contains the main result of this paper (Theorem \ref{thmisopsi}).

\end{exa}

\section{The case of cylindric multipartitions}

\label{cylindric}

Recall that for $\bs\in\sS_e$, we have denoted $\sC_\bs$ the set of cylindric $l$-partitions.

\subsection{Pseudoperiods in a cylindric multipartition}

\label{pp}

Let $|\bla,\bs\rangle \in \sC_\bs$ such that $\bla$ is not FLOTW.
Then there is a set of parts of the same size, say $\al$, such that the residues at the end of these parts cover $\llb 0 , e-1 \rrb$.
This is formalised in the following definition.

\begin{defi}\label{defpp} \hfil

\begin{itemize}
 \item The \textit{first pseudoperiod} of $\bla$ is the sequence $P(\bla)$ of its rightmost nodes $$\ga_1=(a_1,\al,c_1), \dots, \ga_e=(a_e,\al,c_e)$$ verifying:
\begin{enumerate}
\item there is a set of parts of the same size $\al\geq 1$ such that the residues at the rightmost nodes of these parts cover $\llb 0 , e-1 \rrb$.
\item $\al$ is the maximal integer verifying 1.
\item $\ds\cont(\ga_1)=\max_{\substack{c\in \llb 1,l \rrb \\ a \in \llb 1,\h(\la^c) \rrb}} \cont(a,\al,c)$ \quad  and \quad 
 $\ds c_1= \min_{\substack{a \in \llb 1,\h(\la^c) \rrb \\ \cont(a,\al,c)=\cont(\ga_1)}} c$,
\item 
for all $i\in\llb 2,e \rrb$, \quad 
$\ds \cont(\ga_i) =  
\max_{\substack{c\in \llb 1,l \rrb \\ a \in \llb 1,\h(\la^c) \rrb \\ \cont(a,\al,c)<\cont(\ga_{i-1})}} \cont(a,\al,c) $ 
\quad 
\newline 
and  \quad $\ds c_i  =   \min_{\substack{a \in \llb 1,\h(\la^c) \rrb \\ \cont(a,\al,c)=\cont(\ga_i)}} c$.
\end{enumerate}
In this case, $P(\bla)$ is also called a \textit{$\al$-pseudoperiod} of $\bla$, and $\al$ is called the \textit{width} of $P(\bla)$.

\item Denote $\bla^{[1]}:=\bla\backslash P(\bla)$,
that is the multipartition obtained by forgetting 
\footnote{This means that one considers only the nodes of $\bla$ that are in parts whose rightmost node is not in $P(\bla)$,
but without changing the indexation nor the contents of these nodes.}
in $\bla$ the parts $\al$ whose rightmost node belongs to $P(\bla)$.
Let $k\geq 2$. Then the \textit{$k$-th pseudoperiod of $\bla$} is defined recursively as being 
the first pseudoperiod of $\bla^{[k]}$, if it exists, where $\bla^{[k]}:=\bla^{[k-1]}\backslash P(\bla^{[k-1]})$.
\end{itemize}

Any $k$-th pseudoperiod of $\bla$ is called a \textit{pseudoperiod} of $\bla$.

\end{defi}

\begin{exa}\label{exapp}
Let $e=3$, $\bs=(2,3,4)$, and $\bla=(2.1^2,2.1^3,2.1^4)$.
One checks that $\bla$ is cylindric for $e$ but not FLOTW.
Then $\bla$ has the following diagram with contents:

\newcommand{\bluethree}{\textcolor[rgb]{0,0,1}{3}}
\newcommand{\bluefour}{\textcolor[rgb]{0,0,1}{4}}
\newcommand{\bluefive}{\textcolor[rgb]{0,0,1}{5}}
\newcommand{\redone}{\textcolor[rgb]{1,0,0}{1}}
\newcommand{\redtwo}{\textcolor[rgb]{1,0,0}{2}}
\newcommand{\redthree}{\textcolor[rgb]{1,0,0}{3}}
\newcommand{\greenzero}{\textcolor[rgb]{0,1,0}{0}}
\newcommand{\greenone}{\textcolor[rgb]{0,1,0}{1}}
\newcommand{\greentwo}{\textcolor[rgb]{0,1,0}{2}}

$$ \bla= \left( \; \young(2\bluethree,\redone,\greenzero) \; , \; \young(3\bluefour,\redtwo,\greenone,0) \; , \; \young(4\bluefive,\redthree,\greentwo,1,0) \;\right).$$

Then $\bla$ has one $2$-pseudoperiod and two $1$-pseudoperiods. Its first pseudoperiod consists of
$\ga_1=(1,2,3), \ga_2=(1,2,2)$ and $\ga_3=(1,2,1)$, with respective contents $5$, $4$ and $3$, colored in blue.
The second pseudoperiod is $(\ga_1=(2,1,3), \ga_2=(2,1,2), \ga_3=(2,1,1) )$, with red contents; and the third (and last) pseudoperiod is
$(\ga_1=(3,1,3), \ga_2=(3,1,2), \ga_3=(3,1,1) )$, with green contents.
\end{exa}

\begin{lem}\label{lempp1}
\begin{enumerate}
 \item $\cont(\ga_i)=\cont(\ga_{i-1})-1$ for all $i \in\llb 2, e \rrb$.
 In other terms, the contents of the elements of the pseudoperiod are consecutive.
 \item $c_i \leq c_{i-1}$ for all $i \in\llb 2, e \rrb$.
\end{enumerate}

\end{lem}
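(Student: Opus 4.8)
The plan is to prove the two statements in order, deducing (2) from (1). First note that the nodes $\ga_1,\dots,\ga_e$ are well defined: by condition 1 of Definition \ref{defpp} the rightmost nodes of the parts of size $\al$ realise all $e$ residues, so they take at least $e$ distinct content values, and the greedy choice in conditions 3--4 then produces $e$ nodes with $\cont(\ga_1)>\cont(\ga_2)>\dots>\cont(\ga_e)$. To prove that these contents are consecutive I would argue by contradiction. Set $M=\cont(\ga_1)$ and let $M-j$, with $1\le j\le e-1$, be the largest integer below $M$ that is \emph{not} the content of any rightmost node of a part of size $\al$. Since $M,M-1,\dots,M-j+1$ occupy $j$ consecutive residue classes, the residue class of $M-j$ is distinct from all of them; by condition 1 there is a part of size $\al$ whose rightmost node has that residue, and since $M$ is the \emph{maximal} content attained by such a node, this node must have content $\le M-j-e$. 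The idea is then to \emph{lift} this part using the cylindricity inequalities of Definition \ref{defflotw} (first bullet) read cyclically around the $l$ components: a part of size $\al$ at content $\gamma$ is dominated by a part at content $\gamma$ in the previous component when $c\ge 2$, and by a part at content $\gamma+e$ in component $l$ when $c=1$, and iterating raises the content by at least $e$ per full loop while never decreasing the width of the part. Carrying the part up until its content reaches $M-j$ then contradicts the choice of $M-j$.

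\textbf{Main obstacle.} The crux is that the lifting step must keep the width equal to $\al$ rather than enlarging it, and this is exactly where the maximality of $\al$ (condition 2) is used: any strict increase of the width along the lift would, by continuing the lift and descending again, produce a set of parts of a common size $>\al$ whose rightmost nodes cover $\llb 0,e-1\rrb$, contradicting the maximality of $\al$. One must also treat the boundary situations where a row index drops below $1$ before the target content is reached, which is handled directly with the cylindricity inequalities near the top of the components involved. This is the only genuinely delicate point of the proof; everything else is bookkeeping with contents.

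\textbf{Part (2), assuming (1).} Write $\gamma:=\cont(\ga_{i-1})$, so that $\cont(\ga_i)=\gamma-1$ by (1), and suppose for contradiction that $c_i>c_{i-1}$. Chaining the inequalities $\la_a^c\ge\la_{a+s_{c+1}-s_c}^{c+1}$ for $c_{i-1}\le c<c_i$ gives $\la_a^{c_{i-1}}\ge\la_{a+(s_{c_i}-s_{c_{i-1}})}^{c_i}$ for all $a\ge 1$. Applying this at $a=a_{i-1}$ and writing $r:=a_{i-1}+(s_{c_i}-s_{c_{i-1}})\ge a_{i-1}$, we get $\la_r^{c_i}\le\la_{a_{i-1}}^{c_{i-1}}=\al$. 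If $\la_r^{c_i}<\al$, then $a_i<r$ (the parts of $\la^{c_i}$ are weakly decreasing and $\la_{a_i}^{c_i}=\al$), hence $a_i-a_{i-1}<s_{c_i}-s_{c_{i-1}}$ and therefore $\cont(\ga_i)=\al-a_i+s_{c_i}>\al-a_{i-1}+s_{c_{i-1}}=\cont(\ga_{i-1})$, contradicting $\cont(\ga_i)=\gamma-1<\gamma$. So $\la_r^{c_i}=\al$, and its content is $\al-r+s_{c_i}=\al-a_{i-1}+s_{c_{i-1}}=\gamma$; thus $r<a_i$ and all rows of $\la^{c_i}$ from $r$ to $a_i$ equal $\al$, in particular $\la_{r+1}^{c_i}=\al$. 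Applying the chained inequality at $a=a_{i-1}+1$ gives $\la_{a_{i-1}+1}^{c_{i-1}}\ge\la_{r+1}^{c_i}=\al$, and since $\la_{a_{i-1}+1}^{c_{i-1}}\le\la_{a_{i-1}}^{c_{i-1}}=\al$ we conclude $\la_{a_{i-1}+1}^{c_{i-1}}=\al$. Hence $(a_{i-1}+1,\al,c_{i-1})$ is a rightmost node of a part of size $\al$ with content $\al-(a_{i-1}+1)+s_{c_{i-1}}=\gamma-1=\cont(\ga_i)$ lying in component $c_{i-1}<c_i$, which contradicts the minimality of $c_i$ in conditions 3--4 of Definition \ref{defpp}. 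Therefore $c_i\le c_{i-1}$.

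Finally, I note that once part (1) is available, part (2) is the short synthetic argument above; alternatively one could phrase everything through symbols using Remark \ref{remcyclage1}, since the semistandardness of $\fB_\bs(\bla)$ and of $\fB_{\xi(\bs)}(\xi(\bla))$ repackages exactly the cylindricity inequalities exploited here.
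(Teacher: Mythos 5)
Your part (2) is correct, and it fills in the one line the paper devotes to this claim with a clean and complete argument. Your part (1), however, has a genuine gap exactly at the point you flagged as the ``main obstacle,'' and the resolution you propose does not work.

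You correctly set things up: if some $\gamma=M-j$ with $1\le j\le e-1$ is missed, then by the residue condition there is an $\al$-part whose rightmost node has content $\gamma'\le\gamma-e$ with $\gamma'\equiv\gamma\ (\mathrm{mod}\ e)$, and the cylindricity inequalities let you ``lift'' this part, with the content rising by at least $e$ per full loop through the components and the width never decreasing. The problem is what to do when the width does increase strictly. Your proposal is to invoke the maximality of $\al$: you assert that a strict increase of width, ``by continuing the lift and descending again,'' yields a set of parts of a \emph{common} size $>\al$ whose rightmost residues cover $\llb 0,e-1\rrb$. But nothing in the argument produces a \emph{common} size. Lifting past a part of size $\al'>\al$ only gives you a non-decreasing sequence of sizes $\ge\al'$; descending only gives sizes $\le$ the one you descend from; neither step produces $e$ parts of one and the same size hitting all residues. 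Maximality of $\al$ is in fact not the right tool here, and it is a signal that the argument has gone off track.

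The correct resolution uses cylindricity again, but at the \emph{top} of the gap rather than at $\al$. The lift (which is exactly the statement that the sizes read column by column, top row to bottom row, wrapping by $e$ columns, form a non-decreasing sequence, by semistandardness of $\fB_\bs(\bla)$ and of $\fB_{\xi(\bs)}(\xi(\bla))$) shows that \emph{every} entry in the column of content $\gamma$ has size $\ge\al$, regardless of whether intermediate widths grew. By hypothesis none equals $\al$, so every entry there has size $>\al$. Now use that $\gamma+1$ \emph{is} the content of an $\al$-part rightmost node, say in component $c^*$. The part of $\la^{c^*}$ one row down (content $\gamma$) has size at most $\al$, because parts of a partition weakly decrease; this directly contradicts ``all sizes in that column exceed $\al$.'' No appeal to maximality of $\al$ is needed. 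Your boundary issue (row index dropping below $1$ during the lift) is also genuine in the partition-index formulation and is not dealt with; it is handled automatically in the symbol formulation (take the size $p$ of the symbol large enough), which is what the paper does, and which you yourself note as an alternative at the end of your write-up.

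In summary: part (2) is fine; part (1) has a real gap where you appeal to maximality of $\al$, and the fix is a short additional semistandardness/monotonicity argument on the column of the missing content, not a detour through maximality.
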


\begin{proof}
\begin{enumerate}
 \item Suppose there is a gap in the sequence of these contents. Then the pseudoperiod must spread over $e+1$ columns in the symbol $\fB_\bs(\bla)$.
Denote by $\fb$ the integer of $\fB_\bs(\bla)$ corresponding to the last element of $P(\bla)$, and $k$ the column where it appears.
The integer of $\fB_\bs(\bla)$ corresponding to the gap must be in column $k+1$, and since $\bla$ is cylindric, it must be greater than or equal to $\fb+e$.
In fact, it cannot be greater than $\fb+e$ since the corresponding part is below a part of size $\al$, and it has to correspond to a part of size $\al$, 
and there cannot be a gap, whence a contradiction.

\item Since the nodes of $P(\bla)$ are the rightmost nodes of parts of the same size $\al$, 
together with the fact $s_1\leq \dots \leq s_l$, and point 1., $\ga_i$
is necessarily either to the left of $\ga_{i-1}$ or in the same component.

\end{enumerate}
\end{proof}

\begin{rem}\label{rempp2}
If $\al=\max_{i,j} \la^i_j$, then the first pseudoperiod corresponds to a "period" in $\fB_\bs(\bla)$, accordingly to \cite[Definition 2.2]{JaconLecouvey2012}
This is the case in Example \ref{exapp}.
In the case where each pseudoperiod corresponds to a period in the symbol associated to $\bla^{[k]}$,
one can directly recover the empty $l$-partition and the corresponding multicharge using the "peeling procedure" explained in \cite{JaconLecouvey2012}.
However, in general, $\fB_\bs(\bla)$ might not have a period, as shown in the following example.
\end{rem}

\begin{exa}\label{exapp2}
 $e=4, \;\; \bs=(5,6,8) \mand \bla=(6^2.2.1,3.2^3.1^2,6.2^2.1^3)$. Then $\bla\in\sC_\bs$ but is not FLOTW for $e$.
It has the following Young diagram with residues:

$$\bla=\left( \; \young(56789\dix,456789,34,2,1) \; , \; \young(678,56,45,34,2,1) \; , \; \young(89\dix\onze\douze\treize,78,67,5,4,3) \; \right)$$

Then there is a $2$-pseudoperiod and a $1$-pseudoperiod.
The first pseudoperiod of $\bla$ consists of the nodes $\ga_1=(2,2,3)$, $\ga_2=(3,2,3)$, $\ga_3=(2,2,2)$ and $\ga_4=(3,2,2)$,
with respective contents $8$, $7$, $6$ and $5$.
The $1$-pseudoperiod is $((4,1,3),(5,1,3),(6,1,3),(4,1,1))$.

\end{exa}

Of course, one could also describe pseudoperiods on the $\bs$-symbol associated to $\bla$.
However, this approach is not that convenient, and in the setting of cylindric multipartitions,
we favour the "Young diagram with contents" approach, which encodes the same information.
Nevertheless, we notice this property, which will be used in the proof of Lemma \ref{lemisopsi1}:

\medskip

\begin{prop} \label{proppp}
Let $P(\bla)$ be a pseudoperiod of $\bla$. 
Denote by $B$ the set of entries of $\fB_\bs(\bla)$ corresponding to the nodes of $P(\bla)$.
Then each column of $\fB_\bs(\bla)$ contains at most one element of $B$.
Moreover, the elements of $B$ appear in consecutive columns of $\fB_\bs(\bla)$.
\end{prop}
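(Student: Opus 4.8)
The plan is to pass from the ``Young diagram with contents'' picture to the symbol by the standard dictionary between nodes and $\fb$-numbers, and then simply read off column indices. Recall that if $\ga=(a,b,c)$ is the rightmost node of its row, i.e. $b=\la_a^c$, then the entry of $\fB_\bs(\bla)$ attached to the pair $(a,c)$ is $\fb_a^c(\bla)=\la_a^c-a+p+s_c=\cont_\bla(\ga)+p$, and it lies in the $c$-th row of the symbol. Since $\fB_\bs^c(\bla)=(\fb_{p+s_c}^c(\bla),\dots,\fb_1^c(\bla))$ is listed from left to right, this entry occupies the column of index $(p+s_c)-a+1$, counting columns from the left. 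So the first step is to record the formula $\mathrm{col}(a,c):=p+s_c-a+1$ for the column containing the symbol entry of the rightmost node of row $a$ in component $c$; note also that, by the choice $p\ge\max_c(1-s_c+h(\la^c))$, whenever $\la_a^c\neq 0$ one has $1\le a\le h(\la^c)<p+s_c$, so this entry genuinely appears in the displayed array.

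Now let $P(\bla)=(\ga_1,\dots,\ga_e)$ be the pseudoperiod in question, with $\ga_i=(a_i,\al,c_i)$; each $\ga_i$ is the rightmost node of a part of $\bla$ of size $\al$, so $\la_{a_i}^{c_i}=\al$ and the entry of $\ga_i$ in $\fB_\bs(\bla)$ sits in column $\mathrm{col}(a_i,c_i)$. The only input needed is Lemma \ref{lempp1}(1): $\cont_\bla(\ga_i)=\cont_\bla(\ga_{i-1})-1$ for $2\le i\le e$. Expanding contents, $\al-a_i+s_{c_i}=\al-a_{i-1}+s_{c_{i-1}}-1$, that is $a_i-s_{c_i}=(a_{i-1}-s_{c_{i-1}})+1$; substituting into the column formula gives
$$\mathrm{col}(a_i,c_i)=p-(a_i-s_{c_i})+1=p-(a_{i-1}-s_{c_{i-1}})=\mathrm{col}(a_{i-1},c_{i-1})-1.$$
Hence the columns occupied by the entries of $B$ are $\mathrm{col}(a_1,c_1),\ \mathrm{col}(a_1,c_1)-1,\ \dots,\ \mathrm{col}(a_1,c_1)-(e-1)$: a block of $e$ pairwise distinct, consecutive columns. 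Reading this off yields both assertions at once — no column of $\fB_\bs(\bla)$ carries two entries of $B$, and the columns meeting $B$ are consecutive.

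For a $k$-th pseudoperiod with $k\ge 2$ the argument is literally the same: by construction the nodes of $P(\bla^{[k]})$ are rightmost nodes of parts of $\bla$ of a common size, their contents computed in $\bla$ are unchanged, and Lemma \ref{lempp1}(1) holds for every pseudoperiod, so nothing extra is required; in particular Lemma \ref{lempp1}(2) plays no role in this statement. I do not expect a genuine obstacle here: the only two points deserving care are fixing the convention for numbering the columns of a symbol and checking that the symbol entry of each $\ga_i$ actually lies within the displayed array — both dispatched in the first step — after which the proposition reduces to the one-line substitution above.
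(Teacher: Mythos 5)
Your proof is correct and follows essentially the same route as the paper's one-line argument, which likewise deduces the result from Lemma \ref{lempp1} together with the observation that all nodes of $P(\bla)$ are rightmost nodes of parts of the common size $\al$. You have simply made the dictionary $\ga=(a,\al,c)\leftrightarrow\fb_a^c(\bla)$ and the column index $p+s_c-a+1$ explicit, and you are right that only part~(1) of Lemma \ref{lempp1} is actually invoked.
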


\begin{proof}
This is direct from the fact that the nodes of $P(\bla)$ are all rightmost nodes of parts of the same size $\al$,
together with Lemma \ref{lempp1}.
\end{proof}

\bigskip

We will now determine the canonical $\Ueprime$-crystal isomorphism for cylindric multipartitions.
In the following section, we only determine the suitable multicharge. In Section \ref{isopsi}, we explain how to construct the actual corresponding FLOTW multipartition.

\subsection{Determining the multicharge}

\label{mc}

Take $\bs\in\sS_e$ and $|\bla,\bs\rangle \in \sC_\bs$.
By the discussion in the proof of Proposition \ref{defcanonicaliso}, there is only one $l$-charge $\varphi(\bs)\in\sD_e$ such that $|\bla,\bs\rangle$ and $|\bemptyset,\varphi(\bs)\rangle$
are equivalent, namely the representative of $\bs$ in the fundamental domain $\sD_e$ modulo $\widehat{\fS}_l$.

Besides, since $\bs\in\sS_e$, it is clear that $\varphi(\bs)$ is obtained from $\bs$ only by applying a certain number of times the operator $\xi$ or $\xi^{-1}$.
In other terms, we have:

\begin{prop}\label{propmc}
Let $\bs\in\sS_e$.
There exists $k\in\Z$ such that $\xi^k(\bs) \in \sD_e$. 
In fact, we have $\varphi(\bs) = \xi^k(\bs)$.
\end{prop}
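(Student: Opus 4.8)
The plan is to track the effect of $\xi$ on the statistic that measures how far a multicharge in $\sS_e$ is from the fundamental domain $\sD_e$, and to show that iterating $\xi$ (or its inverse) must eventually land in $\sD_e$. Recall that for $\bs = (s_1,\dots,s_l)\in\sS_e$ we have $0\leq s_c - s_{c'} < e$ for $c'<c$, so in particular $s_1\leq s_2\leq\dots\leq s_l$ and $s_l - s_1 < e$. The charge $\bs$ lies in $\sD_e$ precisely when additionally $0\leq s_1$ and $s_l < e$, i.e. when $0\leq s_1$ and $s_l\leq e-1$; equivalently, since $s_l - s_1 < e$ already holds, when $s_1\in\llb 0, e-1\rrb$ (the condition $s_l < e$ is then automatic only if $s_1 = 0$, so one must be slightly careful: the correct characterisation is $0\leq s_1$ and $s_l\leq e-1$). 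The first step is to recall that $\xi(\bs) = (s_l - e, s_1,\dots,s_{l-1})$ and that, because $s_l - e < s_1 \leq \dots \leq s_{l-1}$ (using $s_l - s_1 < e$), the charge $\xi(\bs)$ is again weakly increasing; moreover $\max\xi(\bs) - \min\xi(\bs) = s_{l-1} - (s_l - e) = e - (s_l - s_{l-1}) \leq e$, and in fact $< e$ unless $s_l = s_{l-1}$, in which case one argues that $\xi(\bs)$ still satisfies the $\sS_e$-inequalities after noting the entries are $s_1\leq\dots\leq s_{l-1}$ with $s_{l-1} - (s_l-e) = e - (s_l - s_{l-1})$; a short check confirms $\xi(\bs)\in\sS_e$ in all cases. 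So $\sS_e$ is stable under $\xi$ and under $\xi^{-1}$ (by the symmetric argument, $\xi^{-1}(\bs) = (s_2,\dots,s_l,s_1+e)$).

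Next I would introduce the integer statistic $N(\bs) := s_1$ (the smallest entry) and observe how it moves. Applying $\xi$ replaces the smallest entry $s_1$ by $s_l - e$; since $s_l < s_1 + e$ we get $s_l - e < s_1$, so $N(\xi(\bs)) = s_l - e \leq s_1 = N(\bs)$, with equality only if $s_l = s_1$, i.e. $\bs$ constant — but a constant $\bs\in\sS_e$ with $s_1\geq e$ would be fixed by nothing useful, so one treats that degenerate case separately (a constant charge $(s,\dots,s)$ lies in $\sD_e$ iff $0\leq s\leq e-1$, and otherwise one constant step of $\xi$ or $\xi^{-1}$ shifts $s$ by $-e$ or... actually $\xi$ sends $(s,\dots,s)$ to $(s-e,s,\dots,s)$, breaking constancy, so this is not truly an obstruction). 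Dually, $\xi^{-1}$ sends the largest entry $s_l$ to $s_1 + e > s_l$, strictly increasing $\min$ as well: $N(\xi^{-1}(\bs)) = \min(s_2,\dots,s_l,s_1+e) = s_2 \geq s_1 = N(\bs)$. Thus $\xi$ weakly decreases $s_1$ and $\xi^{-1}$ weakly increases it. The key point is then a \emph{boundedness/termination} argument: if $s_1 < 0$ I apply $\xi^{-1}$ repeatedly; each application increases $\min$ by $s_2 - s_1 \geq 0$, and since the entries lie in a window of width $< e$, after finitely many steps I cannot keep $s_1$ negative unless the charge is constant and negative, where a direct computation (each $\xi^{-1}$ step on a constant charge adds $e$ to one coordinate and eventually makes the charge non-constant, then the previous bound kicks in) handles it. Symmetrically, if $s_l > e-1$ I apply $\xi$ repeatedly, strictly decreasing $\max$ at each genuinely non-constant step. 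This produces some $k\in\Z$ with $\xi^k(\bs)$ satisfying both $s_1\geq 0$ and $s_l\leq e-1$, hence $\xi^k(\bs)\in\sD_e$.

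Finally, for the identification $\vphi(\bs) = \xi^k(\bs)$: by Proposition \ref{propcyclage} the cyclage $\xi$ is a $\Ueprime$-crystal isomorphism, so iterating it gives a crystal isomorphism $B(\bla,\bs)\simeq B(\cF_{\xi^k(\bs)})$ sending $|\bla,\bs\rangle$ to $|\bla',\xi^k(\bs)\rangle$ for the appropriate reordered multipartition; in particular it identifies the highest weight vertices, so $|\bla,\bs\rangle$ is equivalent to $|\bemptyset,\xi^k(\bs)\rangle$ if and only if $\bla$ is itself a highest weight vertex — but what we actually need is only that the charge $\xi^k(\bs)$ is equivalent, as a charge, to $\bs$ under $\widehat{\fS}_l$, which is immediate since $\xi$ corresponds to the element of $\widehat{\fS}_l$ cycling coordinates and shifting by $e$ (explicitly $\xi = \sigma_{l-1}\cdots\sigma_1\, y_1^{-1}$ in the notation of Section \ref{fockspace}, up to the chosen conventions). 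Since $\sD_e$ is a fundamental domain for the $\widehat{\fS}_l$-action, the representative of $\bs$ in $\sD_e$ is unique, and by definition (see the proof of Proposition \ref{defcanonicaliso}) it is exactly $\vphi(\bs)$; hence $\vphi(\bs) = \xi^k(\bs)$.

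\textbf{Main obstacle.} The genuinely delicate point is the termination argument in the middle paragraph: one must be careful that the window width $s_l - s_1 < e$ is preserved by $\xi$ and $\xi^{-1}$ (so that the charge stays in $\sS_e$, keeping the monotonicity of the moves available) and that the degenerate near-constant cases do not allow an infinite descent/ascent; packaging this cleanly — e.g. via the single monovariant "number of steps needed" or a lexicographic pair $(|s_1|, \text{something})$ — is where the real content lies, the rest being bookkeeping with the explicit formula for $\xi$.
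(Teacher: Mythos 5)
Your proposal contains a genuine error in the termination argument. You claim that $\sS_e$ is stable under $\xi$ and $\xi^{-1}$, and in particular you acknowledge the boundary case $s_l = s_{l-1}$ but then assert that ``a short check confirms $\xi(\bs)\in\sS_e$ in all cases.'' This is false. If $s_l = s_{l-1}$, then $\xi(\bs) = (s_l - e, s_1,\dots,s_{l-1})$ has last minus first entry equal to $s_{l-1} - (s_l - e) = e$, which violates the strict inequality defining $\sS_e$. For a concrete counterexample take $e = 3$, $\bs = (0,1,1) \in \sS_e$: then $\xi(\bs) = (-2,0,1)$ has $1 - (-2) = 3 = e$, so $\xi(\bs)\notin\sS_e$. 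Dually, $\xi^{-1}$ leaves $\sS_e$ whenever $s_1 = s_2$. The paper itself explicitly records this phenomenon in Remark \ref{remmcpeel}, which is precisely why the case $\bs'\notin\sS_e$ must be handled separately in its argument. This matters for your argument because you invoke stability of $\sS_e$ to justify that ``the monotonicity of the moves stays available''; with that justification gone, you would need to redo the monovariant computation for charges that are weakly increasing with window $\leq e$ (not $< e$), check that the formula $N(\xi^{-1}(\bs)) = s_2$ still holds there, and verify that you do not both have $s_1 < 0$ and $s_l \geq e$ simultaneously once you leave $\sS_e$. These can all be made to work, but as written the proof is broken at exactly the point you flagged as delicate.

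For comparison, the paper's route is different: it presents the bare existence of $k$ as essentially immediate from the window condition, and devotes the real proof (the ``alternative proof'' following Proposition \ref{propmc0}) to identifying $k$ via the peeling procedure of Jacon--Lecouvey applied to the highest weight vertex, which yields $\varphi(\bs) = \xi^t(\bs)$ up to a finite correction (handled via Remark \ref{remmcpeel}) when $\xi^t(\bs)$ has temporarily drifted out of $\sS_e$. Your approach --- a direct monovariant argument on $\min(\bs)$, independent of any crystal-theoretic machinery --- is more elementary and self-contained in spirit, and your identification of $\varphi(\bs)$ via the fundamental-domain property of $\sD_e$ is correct; but the stability claim must be replaced by the weaker statement that iterated cyclage keeps the charge weakly increasing with window at most $e$, and the bookkeeping must accommodate the intermediate states with window exactly $e$.

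Two smaller points. First, your assertion that $N(\xi(\bs)) = N(\bs)$ ``with equality only if $s_l = s_1$'' is a miscomputation: equality would require $s_l - e = s_1$, i.e.\ $s_l - s_1 = e$, which cannot happen while $\bs \in \sS_e$, so for $\bs\in\sS_e$ the inequality $N(\xi(\bs)) < N(\bs)$ is in fact always strict. Second, the termination sketch via ``each $\xi^{-1}$ increases $\min$ by $s_2 - s_1 \geq 0$'' needs to be packaged as a genuine monovariant: $\min$ can stagnate for up to $l-1$ consecutive steps (whenever $s_1 = s_2 = \dots = s_j$), so the right statement is that $\xi^{-l}$ strictly increases $\min$ by $e$, which does give termination, but the argument as written does not quite say this.
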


Just to be consistent, let us recall that in \cite{JaconLecouvey2012}, Jacon and Lecouvey have proved that when $\bla=\overset{\bullet}{\bla}$ is a highest vertex, then
it suffices to "peel" the symbol of $\bla$ in order to get an empty equivalent multipartition.
We do not recall here this procedure in detail, but it basically consists in removing all periods in the symbol of $\bla$ (see Example \ref{exapeel}).
Therefore, $\varphi(\bs)$ is also the representative in $\sD_e$ of the multicharge associated to the empty multipartition obtained after peeling $\overset{\bullet}{\bla}$.
In the rest of this paragraph, we show that both construction of $\varphi(\bs)$ coincide.

Firstly, since $|\overset{\bullet}{\bla},\bs\rangle$ is already cylindric, it turns out that the period can be easily read in the symbol of $\overset{\bullet}{\bla}$.
In fact, we have the following property:

\begin{prop}\label{cylmc1}
 Let $|\bla,\bs\rangle \in \sC_\bs$ such that $\fB_\bs(\bla)$ has a period $P$.
 Then each of the rightmost $e$ columns of $\fB_\bs(\bla)$ contains a unique element of $P$.
\end{prop}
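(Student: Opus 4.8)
Here is how I would proceed.

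The plan is to reduce everything, via Proposition~\ref{proppp}, to the statement that the entry of $\fB_\bs(\bla)$ encoding the first node of the period lies in the rightmost column of $\fB_\bs(\bla)$. Since $\fB_\bs(\bla)$ has a period, the first pseudoperiod of $\bla$ has width equal to the largest part $\al:=\max_{a,c}\la_a^c$ of $\bla$ (by Remark~\ref{rempp2}); denote its nodes by $\ga_1,\dots,\ga_e$, with $\ga_i=(a_i,\al,c_i)$, so that $P$ is the set of entries of $\fB_\bs(\bla)$ encoding $\ga_1,\dots,\ga_e$. By Proposition~\ref{proppp} these $e$ entries occupy $e$ consecutive columns, at most one per column, hence exactly one in each of $e$ consecutive columns. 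It therefore suffices to show that the rightmost of these columns is the rightmost column of $\fB_\bs(\bla)$.

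First I would locate $\ga_1$. As $\al$ is the largest part, $\la_a^c=\al$ forces $\la_1^c=\dots=\la_a^c=\al$, so $\cont(a-1,\al,c)=\cont(a,\al,c)+1$ whenever $a\geq 2$; thus the maximum in Definition~\ref{defpp}(3) is attained only at nodes lying in first rows, whence $a_1=1$. Setting $S:=s_{c_1}$, one then has $S=\max\{\,s_c\mid\la_1^c=\al\,\}$ (in particular $s_1\leq S\leq s_l$) and $\cont(\ga_1)=\al-1+S$. By Lemma~\ref{lempp1}(1) this gives $\cont(\ga_i)=\al-i+S$, hence $a_i=s_{c_i}-S+i$ for $1\leq i\leq e$; recalling that row $c$ of the symbol has $p+s_c$ entries and that $s_1\leq\dots\leq s_l$, the entry encoding $\ga_i$ sits $s_l-s_{c_i}+a_i=s_l-S+i$ columns from the right of $\fB_\bs(\bla)$. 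So the whole statement reduces to the equality $S=s_l$: some component of maximal charge must have its first row of length $\al$.

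This equality is the heart of the matter, and it is here that cylindricity is used. For $1\leq c\leq l$ let $m_c$ be the multiplicity of $\al$ as a part of $\la^c$, so that $\{a\mid\la_a^c=\al\}=\{1,\dots,m_c\}$ and $S=\max\{\,s_c\mid m_c\geq 1\,\}$. The first condition of Definition~\ref{defflotw} translates into $m_c\geq m_{c+1}-(s_{c+1}-s_c)$ whenever the right-hand side is $\geq 1$ (for $1\leq c\leq l-1$), and into $m_l\geq m_1-\eta$ whenever $m_1\geq\eta+1$, where $\eta:=e+s_1-s_l\geq 1$. Moreover, from $a_e\leq m_{c_e}$ and $a_e=s_{c_e}-S+e$ one gets $m_{c_e}\geq e-S+s_{c_e}$. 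Suppose, for contradiction, that $S<s_l$. A downward induction on $c$ from $c_e$ to $1$, whose side conditions hold because $e-S+s_c\geq 1$ (valid since $S-s_c\leq s_l-s_1<e$ by $\bs\in\sS_e$), yields $m_c\geq e-S+s_c$ for all $1\leq c\leq c_e$; in particular $m_1\geq e-(S-s_1)$. As $S<s_l$, this forces $m_1\geq e-(s_l-s_1)+1=\eta+1$, so the wrap-around inequality applies and $m_l\geq m_1-\eta\geq\bigl(e-(S-s_1)\bigr)-\bigl(e-(s_l-s_1)\bigr)=s_l-S\geq 1$. Hence $\la_1^l=\al$, so $s_l\in\{\,s_c\mid m_c\geq 1\,\}$ and $S\geq s_l$, contradicting $S<s_l$. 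Therefore $S=s_l$.

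Putting the last two paragraphs together, $P$ occupies the columns of index $1,\dots,e$ from the right of $\fB_\bs(\bla)$, i.e. its rightmost $e$ columns, with exactly one entry of $P$ in each. I expect the cyclic propagation of the lower bound on the $m_c$ in the third paragraph to be the one genuinely delicate point: one must check, for each of the $l$ cylindricity inequalities used around the cycle of components, that the relevant side condition ("right-hand side $\geq 1$") is satisfied, and this is exactly where $\bs\in\sS_e$, hence $s_l-s_1<e$, is essential.
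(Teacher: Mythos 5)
Your proof is correct, and it is considerably more substantive than the paper's own one-line argument, which simply invokes Remark~\ref{rempp2} and Proposition~\ref{proppp}. The point those two citations do \emph{not} by themselves settle is precisely the word ``rightmost'': Proposition~\ref{proppp} guarantees that the entries of the pseudoperiod occupy $e$ consecutive columns, one per column, but says nothing about which $e$ consecutive columns. The paper is implicitly appealing to the fact that a ``period'' in the sense of~\cite{JaconLecouvey2012} is, by definition, anchored at the maximal entry of the (semistandard) symbol, which sits in the top-right corner; with that definition accepted, the ``rightmost'' claim is automatic and Proposition~\ref{proppp} finishes the job. You instead prove this anchoring from scratch: after reducing the statement to the equality $S=s_l$ (i.e.\ that some component of maximal charge has first part equal to $\al$), you extract it from the cylindricity inequalities via a careful cyclic propagation of the lower bounds $m_c\geq e-S+s_c$, with the side conditions supplied exactly by $s_l-s_1<e$. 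I checked the chain of inequalities, the side conditions for each step of the descent from $c_e$ to $1$, and the wrap-around through $\la^l$; they are all correct, and the contradiction at $S<s_l$ is sound. So your argument is a valid, self-contained alternative that does not lean on the external definition of period; what you gain is an explicit verification that cylindricity is the hypothesis doing the work, at the price of an argument that is much longer than the paper's two lines.
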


\begin{proof}
Since a period is nothing but a pseudoperiod whose width is the largest part in $\bla$ (cf. Remark \ref{rempp2}),
this result is just a particular case of Proposition \ref{proppp}.
\end{proof}

Hence, deleting the first period (first step of the peeling) $\overset{\bullet}{\bla}$, we get a multicharge $\bs^{(1)}$ verifying
$$\begin{array}{cl}
s^{(1)}_l & =s_l-(s_l-s_{l-1}) = s_{l-1} \\
s^{(1)}_{l-1} & = s_{l-1}- (s_{l-1}-s_{l-2}) = s_{l-2} \\
\vdots \\
s^{(1)}_2 & = s_2 - ( s_2-s_1) =s_1 \\
s^{(1)}_1 & = s_1 - ( e- (s_l - s_1))=s_l-e. \\
\end{array}$$
In other terms, we have $\bs^{(1)}=\xi(\bs)$, where $\xi$ is the cyclage operator defined in Section \ref{cyclage}.

\medskip

Applying this recursively, we obtain

\begin{prop}\label{propmc0}
For all $r\geq 1$, $\bs^{(r)}=\xi^r(\bs),$ 
where $\bs^{(r)}$ denotes the multicharge associated with the peeled symbol after $r$ steps (with $\bs^{(0)}=\bs$).
\end{prop}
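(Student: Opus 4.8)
The plan is to prove the statement by induction on $r$, the base case $r=1$ being precisely the computation displayed just before the statement: since $|\bladot,\bs\rangle\in\sC_\bs$ and its symbol carries a period $P$, Proposition~\ref{cylmc1} forces $P$ to meet each of the rightmost $e$ columns of $\fB_\bs(\bladot)$ in exactly one entry, and reading off which $\fb$-numbers disappear and how the surviving rows are re-indexed yields $s^{(1)}_c=s_{c-1}$ for $2\le c\le l$ and $s^{(1)}_1=s_l-e$, i.e. $\bs^{(1)}=\xi(\bs)$.

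For the inductive step I would assume $\bs^{(r)}=\xi^{r}(\bs)$ \emph{together with} the auxiliary fact that the $r$-times peeled charged multipartition $|\bladot^{[r]},\bs^{(r)}\rangle$ is again cylindric. If $\bladot^{[r]}=\bemptyset$ the peeling has terminated and there is nothing more to prove; otherwise, by the analysis of the peeling procedure for highest weight vertices in \cite{JaconLecouvey2012}, the symbol $\fB_{\bs^{(r)}}(\bladot^{[r]})$ still possesses a period, and Proposition~\ref{cylmc1} applied to $|\bladot^{[r]},\bs^{(r)}\rangle$ shows that this period again occupies the rightmost $e$ columns with a single entry in each. The bookkeeping of the base case then carries over verbatim and gives $\bs^{(r+1)}=\xi(\bs^{(r)})$, whence $\bs^{(r+1)}=\xi^{r+1}(\bs)$ by the induction hypothesis.

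The hard part will be discharging that auxiliary fact, i.e. propagating cylindricity along the peeling: one must verify that deleting a period from the symbol of a cylindric highest weight vertex again produces the symbol of a cylindric charged multipartition, in particular that $\bs^{(r+1)}\in\sS_e$. I would do this either by quoting the relevant structural statement from \cite{JaconLecouvey2012}, or directly --- Lemma~\ref{lempp1} and Proposition~\ref{proppp} pin down the (consecutive) columns occupied by the removed period and show its entries are consecutive integers, after which one checks the two semistandardness conditions of Remark~\ref{remcyclage1} for the truncated symbol. Everything else is routine: the arithmetic giving $s^{(1)}_c=s_{c-1}$, $s^{(1)}_1=s_l-e$ and its iteration is the elementary computation already in the text, and this is exactly what makes the two constructions of $\varphi(\bs)$ coincide, cf. Proposition~\ref{propmc}.
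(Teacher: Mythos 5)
Your overall plan --- induct on $r$, with the base case given by the displayed computation, and at each subsequent step observe that the deleted period again occupies the rightmost $e$ columns --- mirrors what the paper implicitly intends by the one-line ``applying this recursively.'' The problem is with the auxiliary invariant you propose to propagate.

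You want to carry along that $|\bladot^{[r]},\bs^{(r)}\rangle$ remains cylindric, so that Proposition~\ref{cylmc1} applies at every step. But that fact is false in general: the paper's own Remark~\ref{remmcpeel}, placed immediately after this proposition, states explicitly that $\bs^{(r)}$ may leave $\sS_e$. Concretely, in Example~\ref{exapeel} one finds $\bs^{(2)}=(0,1,3)$ with $e=3$, so $s^{(2)}_3-s^{(2)}_1=e$ and $\bs^{(2)}\notin\sS_e$; the twice-peeled $|\bladot^{[2]},\bs^{(2)}\rangle$ is therefore not in $\sC_{\bs^{(2)}}$, and your inductive step cannot invoke Proposition~\ref{cylmc1} there, nor check the ``two semistandardness conditions of Remark~\ref{remcyclage1}'' --- one of them actually fails. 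So the ``hard part'' you flag is not merely hard to discharge; as stated the auxiliary fact is simply not true.

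What does survive, per Remark~\ref{remmcpeel}, is the weaker invariant $s^{(r)}_i\le s^{(r)}_j$ for $i<j$ together with $s^{(r)}_l-s^{(r)}_1\le e$ (the cylindric inequality relaxed to non-strict). Replacing your auxiliary hypothesis by this one, the arithmetic of the base case still goes through, but you then need a separate argument that the period of $\fB_{\bs^{(r)}}(\bladot^{[r]})$ still meets each of the rightmost $e$ columns in exactly one entry when $s^{(r)}_l-s^{(r)}_1=e$. That argument is not literally Proposition~\ref{proppp}, whose setting (via Definition~\ref{defpp}) assumes $\bs\in\sS_e$; you would either have to redo the column-count in the boundary case, or --- as the paper effectively does --- fall back on the structure of the peeling procedure for highest-weight vertices established in \cite{JaconLecouvey2012}, where the required facts about periods are proved without passing through the cylindricity hypothesis.
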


\begin{rem}\label{remmcpeel}
It is possible that, at some step, a multicharge $\bs^{(r)}$ will not be in $\sS_e$ anymore.
However, for any $r$, one always has $s^{(r)}_i \leq s^{(r)}_j$ for $i<j$, and $s^{(r)}_l-s^{(r)}_1 \leq e$.
Moreover, if $s^{(r)}_l-s^{(r)}_1=e$, then $s^{(r+p)}_l-s^{(r+p)}_1 <e$, 
where $p\geq 1$ is the number of components of $s^{(r)}$ equal to $s^{(r)}_l$.

Note that this never happens if $s_i<s_j$ for all $i<j$.
\end{rem}

\begin{exa}\label{exapeel}
 $e=3$, $\bs=(3,3,4)$, $\bla=(3^2.2,2^2.1,3.1^2)=\overset{\bullet}{\bla}$. One checks that $|\bla,\bs\rangle$ is cylindric but not FLOTW.
 The associated symbol is $$\begin{pmatrix}
                             0 & 1 & 3 & 4 & 7 \\
			     0 & 2 & 4 & 5 &    \\
                             0 & 3 & 5 & 6 &  
                            \end{pmatrix} $$

Peeling this symbol, we get successively

$$\begin{pmatrix}
 0 & 1 & 3 & 4 \\
0 & 2 & 4 & 5 \\
0 & 3
 \end{pmatrix} \mand \bs^{(1)}=(1,3,3),$$

$$\begin{pmatrix}
0 & 1 & 3 & 4 \\
0 & 2 &  &  \\
0 & 
 \end{pmatrix}
\mand \bs^{(2)}=(0,1,3),$$

$$\begin{pmatrix}
0 & 1 \\
0 &  \\
0 & 
 \end{pmatrix}
\mand \bs^{(3)}=(0,0,1).$$

Note that $\bs^{(2)} \notin \sS_e$.
\end{exa}

\medskip

This yields an alternative proof of Proposition \ref{propmc}:
\begin{proof}
We know that $\varphi(\bs)$ is the representative of the multicharge $\bs'$ associated to the peeled symbol of $\overset{\bullet}{\bla}$.
Because of Consequence \ref{propmc0}, $\bs'$  is obtained from $\bs$ by applying several times 
(say $t$ times) $\xi$. In other terms, $\bs'=\xi^t(\bs)$.
Now, 
\begin{itemize}
 \item if $\bs'\in\sD_e$, then $\varphi(\bs) = \bs'$ and $k=t$.
 \item if $\bs'\in\sS_e\backslash\sD_e$, then the representative of $\bs'$ in $\sD_e$ is of the form $\xi^v(\bs')$ for some $v\in\Z$.
 Hence, $\varphi(\bs)=\xi^{k}(\bs)$ with $k=t+v$.
 \item if $\bs'\notin\sS_e$, we are however ensured (see Remark \ref{remmcpeel}) that there exists $p\in\llb 1 ,e-1 \rrb$ such that $\xi^p(\bs')\in\sS_e$.
We are then in the previous situation, i.e. there exists $v\in\Z$ such that $\xi^{p+v}(\bs')\in\sD_e$, and therefore $\varphi(\bs)=\xi^{k}(\bs)$ with $k=t+p+v$.
\end{itemize}
For an interpretation of the integer $t$, see Remark \ref{remisoPsi}.
\end{proof}

\begin{exa} As in Example \ref{exapeel}, take $e=3$, $\bs=(3,3,4)$ and  $\bla=(3^2.2,2^2.1,3.1^2)$.
Then $\xi^3(\bs)=(0,0,1)\in\sD_e$. Hence $\varphi(\bs)=(0,0,1)$.
\end{exa}

\medskip

\subsection{Determining the FLOTW multipartition}

\label{isopsi}

In order to compute the multipartition $\varphi(\bla)$, we need to introduce a new crystal isomorphism, which acts on cylindric multipartitions.
In fact, the only difference between a cylindric multipartition and a FLOTW multipartition is the possible presence of pseudoperiods.
Therefore, we want to determine an isomorphism which maps a cylindric multipartition to another cylindric multipartition with one less pseudoperiod.
Applying this recursively, we will eventually end up with a FLOTW multipartition equivalent to $\bla$.

\medskip

Take $\bs\in\sS_e$ and $\bla\in\sC_\bs$.
Let $\al$ be the width of $P(\bla)$, the first pseudoperiod of $\bla$.

Denote by $\psi(\bla)$ the multipartition $\bmu$ charged by $\xi(\bs)$ defined as follows:
\begin{itemize}
\item $\bmu^c$ contains all parts $\la^c_a$ of $\la^c$ such that $1\leq\la^c_a<\al$, for $c\in\llb 1 , l \rrb$,
\item $\bmu^c$ contains all parts $\la^{c-1}_a$ of $\la^{c-1}$ such that $\la^{c-1}_a>\al$ for $c>1$, 
and $\bmu^1$ contains all parts $\la^{l}_a$ of $\la^{l}$ such that $\la^{l}_a>\al$,
\item $\bmu^c$ contains all parts $\la_a^c=\al$ whose rightmost node does not belong to $P(\bla)$, for $c\in\llb 1 , l \rrb$.
\end{itemize}

This naturally defines a mapping $\ga \longleftrightarrow \Ga$ from the set of nodes of $\bla\backslash P(\bla)$ (see Definition \ref{defpp})
onto the set of nodes of $\psi(\bla)$.
We then say that $\Ga$ is \textit{canonically associated to} $\ga$, and conversely.

\begin{exa} \label{exapsi}
Let us go back to Example \ref{exapp2}. We had $e=4$, $\bs=(5,6,8)$ and
$$\bla=\left( \; \young(56789\dix,456789,34,2,1) \; , \; \young(678,5\bsix,4\bcinq,34,2,1) \; , 
\; \young(89\dix\onze\douze\treize,7\bhuit,6\bsept,5,4,3) \; \right),$$
where the bold contents correspond to the first pseudoperiod (whose width is $\al=2$).

Then $\xi(\bs)=(4,5,6)$, and 
$$\psi(\bla)= \left( \; \young(456789,34,2,1) \; , \; \young(56789\dix,456789,34,2,1) \; , \; \young(678,5,4,3) \; \right)=(6.2.1^2,6^2.2.1^2,3.1^3)$$
\end{exa}

%

We observe that the contents of canonically associated nodes are unchanged, 
except for the nodes of $\bla$ that lie in part of $\la^l$ greater than $\al$, whose content is translated by $-e$.
More formally, this writes:

\begin{prop}\label{propcontents} Denote $\psi(\bla)=:\bmu$.

\begin{enumerate}
 \item Let $\Ga=(A,B,C)$ be a node of $\bmu$ with $\mu_A^C>\al$. Denote by $\ga$ the node of $\bla$
canonically associated to $\ga$.

\begin{itemize}
          \item If $c>1$, then $\cont_{\bmu}(\Ga)=\cont_{\bla}(\ga)$.
         \item If $c=1$, then $\cont_{\bmu}(\Ga)=\cont_{\bla}(\ga)-e$.
        \end{itemize}
 \item Let $\Ga=(A,B,C)$ be a node of $\bmu$ with $\mu_A^C\leq\al$. Denote by $\ga$ its canonically associated node in $\bla$.
Then $\cont_{\bmu}(\Ga)=\cont_{\bla}(\ga)$.
\end{enumerate}
\end{prop}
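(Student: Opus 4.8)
The plan is to read both contents off directly from the definition of $\psi$, after one reduction that lets us work row by row. If $\ga=(a,b,c)$ is a node of $\bla\backslash P(\bla)$ and $\Ga=(A,b,C)$ is its canonically associated node in $\bmu:=\psi(\bla)$, then $\ga$ and $\Ga$ lie in corresponding rows of equal-length parts, hence in the same column $b$; consequently $\cont_\bmu(\Ga)-\cont_\bla(\ga)=(a-A)+\bigl((\xi(\bs))_C-s_c\bigr)$ does not depend on $b$, and it suffices to prove the two identities for the rightmost node of each row. For $c\in\llb 1,l\rrb$ arrange the rows of $\la^c$ into a top block of $L_c$ rows of size $>\al$, then $M_c$ rows of size $=\al$, then the rows of size $<\al$, and write $\pi_c$ for the number of size-$\al$ rows of $\la^c$ whose rightmost node lies in $P(\bla)$. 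By construction, and since sorting decreasingly respects these three blocks, $\bmu^c$ is the concatenation of the size-$>\al$ block of $\la^{c-1}$ (of $\la^l$ if $c=1$), then the size-$\al$ rows of $\la^c$ not in $P(\bla)$, then the size-$<\al$ rows of $\la^c$.

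Parts of size $>\al$ (statement 1) are immediate: such a node $\Ga$ lies in the size-$>\al$ block of $\bmu^C$, which is literally that of $\la^{C-1}$ (or of $\la^l$ when $C=1$), so its associated node is $\ga=(a,b,C-1)$ (resp. $(a,b,l)$) with the same row index $A=a$. Since $(\xi(\bs))_C=s_{C-1}$ for $C>1$ and $(\xi(\bs))_1=s_l-e$, we obtain $\cont_\bmu(\Ga)=\cont_\bla(\ga)$ when $C>1$ and $\cont_\bmu(\Ga)=\cont_\bla(\ga)-e$ when $C=1$.

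Parts of size $\le\al$ (statement 2): here $C=c$ and $\mu^c_A=\la^c_a$, so the claim reduces to $a-A=s_c-s_{c-1}$ (with the conventions $s_0:=s_l-e$, $L_0:=L_l$). First, a direct count inside the description of $\bmu^c$ gives $a-A=L_c+\pi_c-L_{c-1}$, \emph{provided} the size-$\al$ rows of $\la^c$ met by $P(\bla)$ are exactly its top $\pi_c$ such rows; this ``prefix'' property follows from Lemma \ref{lempp1}(1)--(2) (which makes the $P$-rows of a fixed component consecutive) together with clauses 3--4 of Definition \ref{defpp} (maximality of $\cont(\ga_1)$ and the smallest-component tie-break), which force $P(\bla)$ to enter each component at the top of its size-$\al$ block. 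It then remains to prove the counting identity $L_c+\pi_c-L_{c-1}=s_c-s_{c-1}$ for every $c$ (cyclically). Setting $\delta_c:=(s_c-s_{c-1})-(L_c-L_{c-1})$, a cyclic telescoping gives $\sum_c\delta_c=e=\sum_c\pi_c$, so it is enough to show $\delta_c\ge\pi_c$. When $\pi_c=0$ this is the cylindricity inequality of Definition \ref{defflotw} applied at row $L_{c-1}+1$. When $\pi_c>0$, the bottom $P$-row of $\la^c$ is row $L_c+\pi_c$, of content $\al-(L_c+\pi_c)+s_c$ by Lemma \ref{lempp1}(1); cylindricity (the relation between $\la^{c-1}$ and $\la^c$, or its wrap-around form for $c=1$) forces the corresponding row of $\la^{c-1}$ (resp. $\la^l$) to have size $\ge\al$, while Definition \ref{defpp} --- the smallest-component rule when $c>1$, the maximality of $\cont(\ga_1)$ when $c=1$ --- forbids that size from being exactly $\al$; hence it is $>\al$, whence $L_{c-1}\ge L_c+\pi_c-(s_c-s_{c-1})$, i.e. $\delta_c\ge\pi_c$ (a trivial boundary check handles the case where the relevant row index is non-positive).

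The genuinely technical point is this last step: one must locate the pseudoperiod precisely inside each component and then squeeze the exact identity out of the one-sided cylindricity inequalities, with the summation argument doing the work of upgrading ``$\ge$'' to ``$=$''. The first three paragraphs are routine bookkeeping once the correspondence $\ga\leftrightarrow\Ga$ and the block decomposition of $\bmu^c$ are made explicit.
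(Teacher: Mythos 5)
Your proof is correct and shares the paper's overall architecture: decompose $\bmu^c$ into blocks (parts $>\al$ inherited from $\la^{c-1}$, then parts $=\al$ of $\la^c$ not in $P(\bla)$, then parts $<\al$ of $\la^c$), reduce to a single node per part, and compare row indices. The substantive divergence is in how you treat the central counting identity, which in the paper's notation reads $s_C - s_{C-1} = N^{>\al}_C + N^\al_C - N^{>\al}_{C-1}$ (cyclically, with $s_0 := s_l - e$ and $N^{>\al}_0 := N^{>\al}_l$). The paper asserts this ``by definition of $P(\bla)$'' and moves on; you actually prove it. Along the way you isolate two points the paper only uses tacitly: the ``prefix'' property (the $P(\bla)$-rows of each component are the top $N^\al_c$ rows of its size-$\al$ block), which you correctly derive from Lemma \ref{lempp1} and the maximality and smallest-component clauses of Definition \ref{defpp}; and the upgrade of the one-sided cylindricity bounds $(s_c - s_{c-1}) - (N^{>\al}_c - N^{>\al}_{c-1}) \geq N^\al_c$ to equalities via the cyclic telescoping, since both sides sum to $e$. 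This is genuinely more than the paper gives, and in particular it covers uniformly the components that $P(\bla)$ skips ($N^\al_c = 0$), where the paper's ``by definition'' is not visibly sufficient. The cost is length; the gain is that the proposition now really follows from the stated lemmas rather than from an unargued intermediate equality.
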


\begin{notation}\label{notation0} Let $\bs$ be a $l$-charge in $\sS_e$, $\bla\in\sC_\bs$ non FLOTW, and $c\in\llb 1 ,l \rrb$.
Set $\al$ to be the width of the first pseudoperiod of $\bla$.
We denote:
\begin{itemize}
 \item $N^{>\al}_{c}$ the number of parts greater than $\al$ in $\la^{c}$,
\item $N^{\al}_c$ the number of parts equal to $\al$ in $\la^c$
that are deleted in $\bla$ to get  $\psi(\bla)$ (i.e. parts whose rightmost node belongs to the pseudoperiod).
\end{itemize}
\end{notation}

\begin{proof}
Of course, it is sufficient to prove this for only one node in each part considered, since the contents of all other nodes of the part are then determined.
We prove it only for the leftmost nodes, i.e. the ones of the form $(A,1,C)$.
\begin{enumerate}
 \item This is clear since the multicharge associated to $\bmu$ is simply the cyclage of $\bs$ (that shifts $\bs$ "to the right" and maps $s_1$ to $s_l-e$), 
and the parts greater than $\al$ are similarly shifted in $\bla$ to get $\bmu$.
 \item Let $\Ga=(A,1,C)$ be a node in $\bmu$ such that $\mu^C_A\leq\al$, so that $\ga=(a,1,c)$ with $c=C$. 

First, assume $C>1$.
Then $$\cont_{\bmu}(\Ga) = s_{C-1} - N^{>\al}_{C-1}.$$
On the other hand, $$\cont_{\bla}(\ga) = s_C - N^{>\al}_{C} - N^{\al}_C.$$
Now by definition of $P(\bla)$, which is charged by $\xi(\bs)$, we have $$s_C-s_{C-1}=N_C^{>\al}+N_C^\al-N_{c-1}^{>\al},$$
which is equivalent to
$$N^\al_C = s_C - N^{>\al}_C - s_{C-1} - N^{>\al}_{C-1}.$$
Hence we have 
$$\begin{array}{ccl}
\cont_{\bla}(\ga) & = & s_C - N^{>\al}_{C} - N^{\al}_C \\
                  & = & s_C - N^{>\al}_{C} - ( s_C - N^{>\al}_C - (s_{C-1} - N^{>\al}_{C-1} )) \\
                 &  = & s_{C-1} - N^{>\al}_{C-1} \\
                  & = &  \cont_{\bmu}(\Ga).
\end{array}$$

\medskip

Now, assume $C=1$. The argument is the same:
$$\cont_{\bmu}(\Ga) = s_{l} - e - N^{>\al}_{l}, \mand$$
$$\cont_{\bla}(\ga) = s_1 - N^{>\al}_{1} - N^{\al}_1.$$
Moreover, we have:
$$\begin{array}{ccl} N^{\al}_1 & = & e - \sum_{d=2}^l N_d^{\al} \\
                            & = & e - \sum_{d=2}^l (r_d-r_{d-1}) \\
                            & = & e - r_l + r_1 \\
                            & = & e - (s_l - N^{>\al}_l) + (s_1 - N^{>\al}_1) \text{ \quad using the above case},   
  \end{array} $$

which implies that 

$$\begin{array}{ccl}
  \cont_{\bmu}(\Ga) & = & s_{l} - e - N^{>\al}_{l} \\
                    & = & s_1 - N^{>\al}_1 - e + s_l -  N^{>\al}_l - s_1 + N^{>\al}_1 \\
                    & = & s_l - e - N^{>\al}_l \\
                    & =  & \cont_{\bla}(\ga).
  \end{array}$$

\end{enumerate}

\end{proof}

We could also have chosen to describe this procedure directly on symbols.
In fact, the first pseudoperiod in the symbol of a cylindric multipartition would be the sequence of $e$ consecutive entries verifying the following properties:
\begin{enumerate}
 \item they lie in consecutive columns,
 \item if there are several candidates for an entry, choose the one lying in the lowest row,
 \item the first entry of the pseudoperiod is maximal amongst all candidates.
\end{enumerate}
Clearly, this makes this pseudoperiod unique.
Note that here we only consider the entries encoding non-empty parts.

Denote by $P(\bla)=(\be,\be-1,\dots,\be-e+1)$ the pseudoperiod in a symbol $\fB_\bs(\bla)$.
The map $\psi$ then acts on $\fB_\bs(\bla)$ by:
\begin{itemize}
 \item keeping all entries smaller than $\be$ which do not belong to $P(\bla)$ in the same place,
 \item shifting the entries greater than $\be-1$ which do not belong to $P(\bla)$ to the row just above (and translating by $-e$ and shifting from the top to bottom row if need be),
 \item deleting the entries of $P(\bla)$.
\end{itemize}

\begin{rem}\label{remabacus}
If we replace symbols by abacuses (which are almost identical representations of charged multipartition, in the sense that they bring out the beta numbers)
note that this resembles Tingley's tightening procedure \cite{Tingley2008} on descending abacuses.
Here, however, we do not require to shift one entry per row at a time. Besides, the heart of our procedure lies in the deletion of entire pseudoperiods, 
whereas Tingley's algorithm is rather based upon the shifting phenomenon (yet enabling the deletion of exactly $e$ nodes in the multipartition).
\end{rem}

\medskip

We now aim to prove that the map $\psi$ we have just defined is in fact a crystal isomorphism between
connected components of Fock spaces crystals (this is upcoming Theorem \ref{thmisopsi}).
In order to do that, we will need the following three lemmas, in which we investigate the compatibility
between $\psi$ and the possible actions of the crystal operators $\tf_i$.
For the sake of clarity (the proofs of these statements being rather technical), they are proved in Appendix \ref{appendix}.

\begin{lem}\label{lemisopsi1}
Suppose that $\ga^+=(a,\al+1,c)$ is the good addable $i$-node of $\bla$, with $\ga\in P(\bla)$.
Then
\begin{itemize}
\item $\De^+=(a,\al+1,c+1)$ is the good addable $i$-node of $\psi(\bla)$ if $1\leq c < l$,
\item $\De^+=(a,\al+1,1)$ is the good addable $i$-node of $\psi(\bla)$ if $c=l$.
\end{itemize}
\end{lem}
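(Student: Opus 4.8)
The plan is to translate the combinatorial hypothesis into the language of $i$-words and then track what happens to the relevant letters $A$ and $R$ under $\psi$. First I would set up notation: write $j=\cont_{\bla}(\ga^+)$, so that (as $\Uinf$ and $\Ueprime$ crystal structures are related by Lemma~\ref{lemcompat1} and Proposition~\ref{propcompat}) the node $\ga^+$ being the good addable $i$-node of $\bla$ for the $\Ueprime$-structure forces, via Relation~(\ref{iwords}), that $\ga^+$ gives the rightmost $A$ of the reduced word $\hat w_j^\infty(\bla)$; and moreover that in the concatenation $w_i^e(\bla)=\prod_k w_{i+ke}^\infty(\bla)$ there is no unmatched $R$ to the left of it, nor any $A$ to its right, in $\hat w_i^e(\bla)$. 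Then I would use Proposition~\ref{propcontents}: the node $\Ga^+=(a,\al+1,c')$ of $\psi(\bla)$ (with $c'=c+1$ if $c<l$, and $c'=1$ if $c=l$) is canonically associated to $\ga^+$, and $\Ga^+$ lies in a part of size $\geq\al+1>\al$, so its content equals $\cont_{\bla}(\ga^+)=j$ when $c<l$, and equals $j-e$ when $c=l$. In either case the residue modulo $e$ is still $i$, so $\Ga^+$ is an addable $i$-node of $\psi(\bla)$; call its content $j'$ (equal to $j$ or $j-e$).

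Next comes the heart of the matter: comparing the reduced $i$-word of $\psi(\bla)$ with that of $\bla$. The key observation is that $\psi$ removes exactly the parts of size $\al$ belonging to $P(\bla)$ and cyclically shifts the parts of size $>\al$ (translating the component-$l$ parts' contents by $-e$), while leaving the parts of size $<\al$ untouched with unchanged contents. I would argue that the addable/removable $i$-nodes of $\psi(\bla)$ are in near-bijection with those of $\bla$: the only nodes that can be created or destroyed in the passage $\bla\rightsquigarrow\psi(\bla)$ are those adjacent to a part of size $\al$ or $\al+1$ at the "seam" where the pseudoperiod was excised. Using Lemma~\ref{lempp1} (the contents of $P(\bla)$ are consecutive, $\cont(\ga_i)=\cont(\ga_{i-1})-1$), one sees that the endpoints of the pseudoperiod contribute nodes of residues covering all of $\llb 0,e-1\rrb$, so for each residue $i$ the deletion affects the $i$-word in a controlled way — precisely one addable $i$-node (the one "above" where a part of size $\al$ sat, i.e. an $(a,\al+1,\cdot)$-type slot) either disappears or is shifted, and correspondingly one removable $i$-node (at the right end of a deleted part of size $\al$) disappears. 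I would show these two changes are an $RA$-pair in the appropriate spot of $w_i^\bullet$, hence cancel in the reduction and do not disturb the position of the $A$ coming from $\ga^+$/$\Ga^+$; and that the shift of the $>\al$ parts preserves the relative order $\prec$ among the surviving $i$-nodes (here one uses $s_1\le\cdots\le s_l$ together with the cyclic relabeling, and the content-shift of $-e$ in component $l$ exactly compensates the change of fundamental domain). The upshot is $\hat w_{i}^{e}(\psi(\bla))$ has the same shape as $\hat w_i^e(\bla)$ with the $A$ encoding $\Ga^+$ sitting in the rightmost position — so $\Ga^+$ is the good addable $i$-node of $\psi(\bla)$.

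Concretely I expect the cleanest execution to go through the symbol description of $\psi$ given just before Remark~\ref{remabacus}, combined with Proposition~\ref{proppp}: the entries of $B$ (encoding $P(\bla)$) occupy consecutive columns with at most one per column, and $\psi$ deletes them while sliding the larger entries up one row. On a symbol, addable/removable $i$-nodes correspond to certain adjacent pairs of entries differing suitably mod $e$, and one checks column by column that deleting the block $B$ and performing the row-shift introduces exactly one cancelling $RA$ in each residue class's word, leaving the reduced word — and in particular the rightmost $A$ — intact. The main obstacle, and the reason this is deferred to the appendix, is precisely the bookkeeping at the two ends of the excised block and across the cyclic "wrap-around" from row $l$ to row $1$: one must verify that no \emph{other} $A$ to the right of $\Ga^+$ is accidentally created in $\hat w_i^e(\psi(\bla))$, and that the $R$'s that previously cancelled competing $A$'s in $\hat w_i^e(\bla)$ still do so after the shift. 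This requires a careful case analysis according to whether $c<l$ or $c=l$, whether the part $\la^c_a$ immediately below $\ga$ has size $\al$ or $\al+1$, and whether $\ga^+$'s column in the symbol lies inside, just left of, or just right of the block of columns occupied by $B$ — but in each case the consecutiveness of contents from Lemma~\ref{lempp1} and the content identities of Proposition~\ref{propcontents} pin everything down.
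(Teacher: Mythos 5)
Your proposal has a genuine gap at the very first concrete step, and this gap is precisely where the paper's proof has to do all its work.

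You write that $\Ga^+=(a,\al+1,c')$ of $\psi(\bla)$ ``is canonically associated to $\ga^+$'' and hence inherits the right content from Proposition~\ref{propcontents}. This is not a valid application of the canonical association. By construction, the bijection $\ga\leftrightarrow\Ga$ is between nodes of $\bla\backslash P(\bla)$ and nodes of $\psi(\bla)$; here $\ga\in P(\bla)$, so $\ga$ has no canonical image at all (its part is deleted), and $\ga^+$ is not even a node of $\bla$, merely an addable position. There is nothing that automatically ``shifts'' $\ga^+$ to component $c+1$. Your subsequent claim that $\Ga^+$ ``lies in a part of size $\geq\al+1>\al$'' compounds the confusion: the lemma asserts $\De^+$ is \emph{addable}, so it lies in no part, and the part it would be added to must have size exactly $\al$ in $\psi(\bla)$. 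The real task — which your proposal skips — is to \emph{prove the existence} of a node $\de$ of $\bla$ with $\de\notin P(\bla)$, lying in component $c+1$ (or $1$ if $c=l$), in a part of size $\al$, with the correct content, so that its canonical image $\De$ satisfies $\De^+=(a,\al+1,c')$. Establishing this existence is where the cylindricity and semistandardness of $\fB_\bs(\bla)$ get used in an essential way (via Lemma~\ref{lempp1} and Proposition~\ref{proppp}), and it is not automatic: it is proved in the paper by a six-way case split according to whether $\ga$ is the first, last, or an interior node of $P(\bla)$, and whether $s_c<s_{c+1}$ or $s_c=s_{c+1}$ (or whether $\ga_e$ is removable). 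In several of these cases the argument is a genuine ``chain of inequalities forced to be equalities'' using cylindricity on the symbol, not mere bookkeeping.

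Your second paragraph — that deleting $P(\bla)$ and shifting the $>\al$ parts only introduces controlled $RA$-pairs that cancel in the reduced word — is in the right spirit and is close to how the paper finishes once $\de$ is in hand. But you defer exactly the hard part (``the bookkeeping at the two ends of the excised block and across the cyclic wrap-around'') to a case analysis you do not carry out, and the framing in terms of ``near-bijection'' and ``seams'' does not by itself rule out that, after the shift, some \emph{other} addable $i$-node in $\psi(\bla)$ produces an $A$ to the right of the one you want, or that an $R$ that used to guard $\ga^+$ disappears. The paper's argument for this last step works precisely because $\de$ is explicitly constructed to sit adjacent (in the $i$-word) to the vanished $A$ from $\ga^+$, so one can literally see that the $A$ from $\De^+$ takes the same position; without first producing $\de$, the reduced-word comparison has no anchor.
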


\begin{lem}\label{lemisopsi2}
Suppose that $\ga^+=(a,\la^c_a +1,c)$ is the good addable $i$-node of $\bla$, with $\la^c_a <\al$ or
[$\la^c_a=\al$ and $\ga\notin P(\bla)$].
Then $\Ga^+=(a-D,\la^c_a+1,c)$ is the good addable $i$-node of $\psi(\bla)$, where
\begin{itemize}
\item $D=N^{>\al}_c-N^{>\al}_{c-1} + N^\al_c-N^\al_{c-1}$ if $c\geq1$,
\item $D=N^{>\al}_1-N^{>\al}_{l} + N^\al_1-N^\al_{l}$ if $c=1$ (see Notation \ref{notation0}).
\end{itemize}
\end{lem}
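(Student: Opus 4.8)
The plan is to track, position by position, how the $i$-word of $\psi(\bla)$ is obtained from the $i$-word of $\bla$ under the map $\psi$, and then argue that the "good addable" marker is preserved. First I would set up the bookkeeping: fix $i$ and consider the addable/removable $i$-nodes of $\bla$ lying in rows of length $<\al$, in rows of length $=\al$ whose rightmost node is not in $P(\bla)$, in rows of length $>\al$, and — crucially — the addable/removable $i$-nodes sitting at the right end of the parts of size exactly $\al$, i.e. those interacting with the pseudoperiod $P(\bla)$. By Proposition \ref{propcontents}, the canonical bijection $\ga\leftrightarrow\Ga$ preserves contents (hence residues) for every surviving node, except that nodes in parts of $\la^l$ of size $>\al$ get their content shifted by $-e$; since the charge also shifts by the cyclage $\xi$ (sending $s_1\mapsto s_l-e$ and cycling the rest), the $i$-residue of every surviving node of $\bla$ equals the $i$-residue of its canonically associated node in $\psi(\bla)$. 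Thus the multiset of $i$-nodes of $\psi(\bla)$ is, up to the deletion of the $e$ nodes of the width-$\al$ parts in $P(\bla)$ (which contribute exactly one $i$-node among them, together with its associated addable node $\ga^+$, if relevant) and the insertion of the new addable nodes at the ends of the shifted parts, the same as that of $\bla$.

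Next I would compare the order $\prec_{\bs}$ on $\bla$ with $\prec_{\xi(\bs)}$ on $\psi(\bla)$. The key observation is that $\psi$ does not reshuffle the relative $\prec$-order of the surviving nodes: for two surviving nodes $\ga,\ga'$ with associated $\Ga,\Ga'$, if neither lies in a part of $\la^l$ of size $>\al$ then contents and components are unchanged on both sides; if one or both lie in such parts, the simultaneous shift of content by $-e$ and of component (from $l$ to $1$, the cyclage) is exactly compensated, so $\ga\prec_{\bs}\ga' \iff \Ga\prec_{\xi(\bs)}\Ga'$. Consequently the $i$-word $w_i(\psi(\bla))$ is obtained from $w_i(\bla)$ by deleting the (at most two) letters coming from $P(\bla)$ and its addable neighbours and relabelling the rest. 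Because we are in the case where $\ga^+=(a,\la^c_a+1,c)$ with $\la^c_a<\al$ or [$\la^c_a=\al,\ \ga\notin P(\bla)$], the letter $A$ encoding $\ga^+$ is \emph{not} among the deleted letters, so it survives into $w_i(\psi(\bla))$. Then I would invoke the stability result: by Proposition \ref{propstability} (and the fact that $\psi$ is built to mimic the effect of $\te_i$-sequences, which is the content being established) the reduction of the $i$-word behaves compatibly, so the letter $A$ that was the rightmost surviving $A$ in $\hat w_i(\bla)$ remains the rightmost $A$ in $\hat w_i(\psi(\bla))$. Hence $\Ga^+$ is the good addable $i$-node of $\psi(\bla)$.

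Finally I would pin down the row index: $\Ga^+=(a-D,\la^c_a+1,c)$. The column $\la^c_a+1$ and the component $c$ are unchanged by the previous paragraph. The row shift $D$ is simply the number of parts of $\la^{c}$ that are removed when passing to $\psi(\bla)^c$ and not replaced — precisely, the parts of $\la^c$ of size $>\al$ migrate to component $c+1$ (or $1$), and the parts of size $=\al$ in $P(\bla)$ are deleted, while the parts of size $>\al$ of $\la^{c-1}$ (or $\la^l$ if $c=1$) migrate \emph{into} component $c$ and occupy the top rows; counting gives $D=N^{>\al}_c-N^{>\al}_{c-1}+N^{\al}_c-N^{\al}_{c-1}$, with the cyclic convention $c-1\to l$ when $c=1$. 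This is a direct bookkeeping computation from the definition of $\psi$ and Notation \ref{notation0}.

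The main obstacle will be the second paragraph: showing rigorously that the reduction (cancellation of $RA$ factors) of the $i$-word commutes with the deletion/relabelling induced by $\psi$, so that "rightmost $A$" is genuinely preserved. The delicate point is that the letters removed from $w_i(\bla)$ come from the pseudoperiod region, and one must check that removing them cannot expose a previously-cancelled $A$ to the \emph{right} of $\ga^+$ — this is where the hypothesis $\ga\in P(\bla)$ is \emph{excluded} and the precise position of $P(\bla)$ (it occupies $e$ consecutive columns, by Proposition \ref{proppp}, at the top of the content order by Lemma \ref{lempp1} and Definition \ref{defpp}) is used to control exactly which letters disappear.
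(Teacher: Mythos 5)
Your overall plan (track the $i$-word under $\psi$ and show the good letter $A$ survives) is the right one, and the bookkeeping in your last paragraph for the row shift $D$ is correct. But the central step of your second paragraph contains a genuine gap, and the way you patch it is circular. You claim $w_i(\psi(\bla))$ is obtained from $w_i(\bla)$ ``by deleting the (at most two) letters coming from $P(\bla)$ and its addable neighbours and relabelling the rest,'' and then invoke Proposition \ref{propstability} together with ``the fact that $\psi$ is built to mimic the effect of $\te_i$-sequences'' to conclude the reduced $i$-word behaves compatibly. This is not a valid step: Proposition \ref{propstability} is about the stability of cylindricity under $\te_i,\tf_i$, not about $\psi$, and appealing to the fact that $\psi$ behaves like a crystal morphism is precisely what Lemmas \ref{lemisopsi1}--\ref{lemisopsi3} are supposed to establish.

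What your proposal does not supply is the treatment of the \emph{new} letters $A$ that $\psi$ creates. If $\eta=(b,\al,d)$ is the rightmost node of a part of size $\al$ not in $P(\bla)$ that lies just below a part whose rightmost node \emph{is} in $P(\bla)$, then deleting that pseudoperiod part turns $\eta^+$ into an addable node of $\psi(\bla)$, introducing a brand-new $A$ in $w_i^\psi$ that was absent from $w_i$. You mention ``insertion of the new addable nodes'' in passing in your first paragraph but then drop them from the $i$-word comparison in the second. The heart of the paper's proof is exactly showing that each such new $A$ is harmless: $\eta$ cannot sit at the maximal content of $P(\bla)$ (Lemma \ref{lempp1}), so there is a node $\tga\in P(\bla)$ in a lower component with the same content as $\eta$ and with $\tga^+$ addable; the node $H$ of $\psi(\bla)$ canonically associated to $\eta$ then plays the role that $\tga$ played in $\bla$, and since $\ga^+$ is the good addable $i$-node and lies in a smaller part, there must be a removable node $\rho$ whose $R$ cancels the $A$ of $\tga^+$ in $\hat w_i(\bla)$. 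A short case analysis (on whether $\rho$ lies in a part of size $\neq\al$, or of size $\al$ with $\cont(\rho)=\cont(\eta^+)$, or $<\cont(\eta^+)$) shows $\rho\notin P(\bla)$, so the corresponding $R$ survives into $w_i^\psi$ and cancels the new $A$ encoding $H^+$. Your outline identifies that ``the reduction must commute with the deletion'' as the delicate point, but you misdiagnose it as the risk of ``exposing a previously-cancelled $A$ to the right of $\ga^+$'' rather than the appearance and subsequent cancellation of new $A$'s, and you do not produce the argument that resolves it.
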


\begin{lem}\label{lemisopsi3}
Suppose that $\ga^+=(a,\la^c_a +1,c)$ is the good addable $i$-node of $\bla$, with $\la^c_a >\al$. 
Then 
\begin{itemize}  
\item $\Ga^+=(a,\la^c_a +1,c+1)$ is the good addable $i$-node of $\psi(\bla)$ if $1\leq c < l$,
\item $\Ga^+=(a,\la^c_a +1,1)$ is the good addable $i$-node of $\psi(\bla)$ if $c=l$.
\end{itemize}
\end{lem}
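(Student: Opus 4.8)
The plan is to establish the statement in two stages: first show that $\Ga^+$ is an addable $i$-node of $\psi(\bla)$, and then upgrade this to ``$\Ga^+$ is the \emph{good} addable $i$-node''. For the first stage, note that since $\al\geq 1$ and $\la^c_a>\al$, the row of $\la^c$ of length $\la^c_a$ is a ``big'' row, so by the very definition of $\psi$ it is relocated — keeping both its row index $a$ and its length — to component $c+1$ when $c<l$, resp.\ to component $1$ when $c=l$; hence $\Ga$ is the rightmost node of a genuine row of $\psi(\bla)$ and $\Ga^+$ sits at the end of that row. Addability of $\Ga^+$ in $\psi(\bla)$ follows because the row immediately above it, if $a\geq 2$, is the relocation of row $a-1$ of $\la^c$, which is strictly longer than $\la^c_a$ precisely because $\ga^+$ is addable in $\bla$. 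For the residue, $\cont_{\psi(\bla)}(\Ga^+)=\cont_{\psi(\bla)}(\Ga)+1$, and Proposition \ref{propcontents}(1) gives $\cont_{\psi(\bla)}(\Ga)=\cont_\bla(\ga)$ when $c<l$ and $\cont_{\psi(\bla)}(\Ga)=\cont_\bla(\ga)-e$ when $c=l$; since $\cont_\bla(\ga^+)=\cont_\bla(\ga)+1$ has residue $i$, so does $\cont_{\psi(\bla)}(\Ga^+)$ in either case.

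The second stage is the real work, and I would carry it out by comparing the $i$-words $w_i(\bla)$ and $w_i(\psi(\bla))$ for the $\Ueprime$-crystal structure. The two facts to establish are: (a) every addable or removable $i$-node of $\bla$ not lying in a row of the first pseudoperiod $P(\bla)$ is carried by $\psi$ to an addable, resp.\ removable, $i$-node of $\psi(\bla)$ — the type is preserved, the content is preserved except that it drops by $e$ for nodes sitting in a big row of the last component (Proposition \ref{propcontents}); and (b) conversely every addable/removable $i$-node of $\psi(\bla)$ arises this way, the only exceptions being the addable nodes attached to the rows of $P(\bla)$ (handled by Lemma \ref{lemisopsi1}) and the removable nodes constituting $P(\bla)$ itself, which are simply deleted. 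Granting (a) and (b), passing from the reduced word $\hat w_i(\bla)=A^{\alpha}R^{\beta}$ to $\hat w_i(\psi(\bla))$ only amounts to: re-sorting, with respect to $\prec_{\xi(\bs)}$, the block of letters coming from big rows of $\la^l$ (whose component \emph{and} content both change); deleting at most one letter $R$ coming from $P(\bla)$ while relocating one letter $A$ per Lemma \ref{lemisopsi1}; and replacing the letter attached to $\ga^+$ by the one attached to $\Ga^+$.

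The hard part will be proving that none of these alterations can create an $A$ to the right of the one attached to $\Ga^+$ in the reduced word, i.e.\ that $\Ga^+$ is still the rightmost surviving $A$. This is where the extremality conditions in Definition \ref{defpp} and the cylindricity of $\bla$ (Remark \ref{remcyclage1}, Lemma \ref{lempp1}) are needed: they pin down the contents of the nodes of $P(\bla)$ and of the relocated big rows of $\la^l$ relative to $\cont_\bla(\ga^+)$, forcing the relevant deletions and re-orderings to occur strictly to the left of $\ga^+$'s letter, or else to cancel against letters already to its left. I would treat the cases $c<l$ and $c=l$ separately at this point, the second being more delicate because component $1$ becomes the new leftmost component, so the $-e$ content shift alters $\prec$-comparisons at equal content. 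Once this bookkeeping is complete, facts (a), (b) together with the location of $\Ga^+$ computed in stage one yield that $\Ga^+$ is the good addable $i$-node of $\psi(\bla)$, which is the assertion of the lemma.
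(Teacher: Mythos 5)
Your stage-one argument (addability of $\Ga^+$, residue via Proposition \ref{propcontents}, preservation of the row index since big rows keep their order when shifted to the next component) is correct and matches the paper. The problem is stage two, which is the substance of the lemma, and which you only sketch.

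First, your inventory (a)/(b) of how $i$-nodes of $\bla$ and $\psi(\bla)$ correspond is not quite right. It is true that no \emph{new removable} $i$-nodes appear after $\psi$ (cylindricity of $\bla$ forbids this, as the paper notes in the proof of Lemma \ref{lemisopsi2}), but there \emph{are} new addable $i$-nodes in $\psi(\bla)$ that do not come from addable nodes of $\bla$: they are the nodes $\text{H}^+$, where $\text{H}$ sits at the end of an $\al$-row $\eta\notin P(\bla)$ lying immediately below an $\al$-row whose rightmost node is in $P(\bla)$; such $\eta^+$ is blocked in $\bla$ but becomes addable once the pseudoperiod row above is deleted. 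These are not the ``addable nodes attached to the rows of $P(\bla)$'' handled by Lemma \ref{lemisopsi1} (which concerns $\ga^+$ with $\ga\in P(\bla)$ itself), so they are genuinely missing from your list of exceptions, and they have to be accounted for when comparing $w_i(\bla)$ to $w_i(\psi(\bla))$. Symmetrically, letters $A$ coming from $\eta^+$ with $\eta\in P(\bla)$ \emph{vanish} after $\psi$, which threatens to free a matching $R$ to cancel against the letter of $\Ga^+$.

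Second, and decisively, you never prove the point you yourself call ``the hard part'': that after all these deletions, insertions and content shifts, $\Ga^+$ is still the rightmost surviving $A$ in the reduced $i$-word. You write that you ``would'' treat $c<l$ and $c=l$ separately and that ``once this bookkeeping is complete'' the conclusion follows -- but the bookkeeping \emph{is} the proof. The paper's Appendix argument for Lemma \ref{lemisopsi3} consists exactly of showing that each vanished letter $A$ (encoding $\eta^+$ with $\eta\in P(\bla)$) is matched with either a new $A$ that plays the same role in $w_i(\psi(\bla))$ (the end of an $\al$-part sitting just below $\eta_1$, the previous/last node of the pseudoperiod) or with a letter $R$ (encoding $\eta_1$ itself) that vanishes simultaneously; this pairing is what guarantees that no freed-up $R$ can cancel the $A$ of $\Ga^+$. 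Until you produce that case analysis, the proposal has a genuine gap at the crux of the lemma.
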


We are now ready to prove the following key result.

\begin{thm}\label{thmisopsi} Let $|\bladot,\bs\rangle$ be a cylindric $l$-partition.
 The map $$\begin{array}{cccc}  \psi : & B(\bladot,\bs) & \lra & B(\overset{\bullet}{\psi(\bla)},\xi(\bs)) \\
                                       & \bla  & \longmapsto & \psi(\bla) 
          \end{array}$$
is a crystal isomorphism. We call it the \textit{reduction} isomorphism for cylindric multipartitions.
\end{thm}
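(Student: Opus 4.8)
The plan is to verify the two conditions of Definition~\ref{crystaliso}: that $\psi$ intertwines the operators $\tf_i$, and that it carries the highest weight vertex $\bladot$ to a highest weight vertex. Throughout we may assume $\bladot\ne\bemptyset$, for otherwise $\bla$ ranges over FLOTW multipartitions (Theorem~\ref{thmflotw}) and there is nothing to prove; when $\bladot$ is a nonempty cylindric $l$-partition it is not FLOTW (a nonempty FLOTW multipartition is never a highest weight vertex, being a non-initial vertex of some $B(\bs)$), so $\bladot$ has a pseudoperiod, and consequently no vertex of $B(\bladot,\bs)$ is FLOTW since those form the separate component $B(\bs)$. By Proposition~\ref{propstability} every vertex of $B(\bladot,\bs)$ is cylindric, hence has a well-defined first pseudoperiod; thus $\psi$ is defined on all of $B(\bladot,\bs)$. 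Using Proposition~\ref{propcontents} together with the description of $\psi$ on symbols given just before Remark~\ref{remabacus}, one also checks that $\psi(\bla)$ again satisfies the cylindricity conditions of Remark~\ref{remcyclage1} (with the caveat of Remark~\ref{remmcpeel} about membership in $\sS_e$); this will be needed to iterate $\psi$ in Theorem~\ref{thmisophi} but is not required here.

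The core of the argument is the identity $\psi\circ\tf_i=\tf_i\circ\psi$ for $\tf_i$ acting non-trivially. Fix $\bla\in B(\bladot,\bs)$, let $\al$ be the width of its first pseudoperiod $P(\bla)$, and let $\ga^+$ be the good addable $i$-node of $\bla$, where $\ga=(a,\la^c_a,c)$. There are exactly three cases for the position of $\ga^+$, according to whether $\ga\in P(\bla)$, or $\la^c_a\le\al$ with $\ga\notin P(\bla)$, or $\la^c_a>\al$; these are precisely the hypotheses of Lemmas~\ref{lemisopsi1}, \ref{lemisopsi2} and \ref{lemisopsi3} respectively, and they are exhaustive since $\la^c_a$ is either $<\al$, $=\al$, or $>\al$. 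In each case the relevant lemma pinpoints the good addable $i$-node $\delta$ of $\psi(\bla)$, so $\tf_i(\psi(\bla))=\psi(\bla)\cup\{\delta\}$. It then remains to identify $\psi(\tf_i(\bla))$: reading off the symbolic description of $\psi$, one verifies that the first pseudoperiod of $\tf_i(\bla)=\bla\cup\{\ga^+\}$ is obtained from $P(\bla)$ in the controlled way established inside the proof of the applicable lemma, and that the part-sorting rules defining $\psi$ then send the newly added part of $\tf_i(\bla)$ to the part of $\psi(\bla)$ containing $\delta$; hence $\psi(\tf_i(\bla))=\psi(\bla)\cup\{\delta\}=\tf_i(\psi(\bla))$. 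Commutation with the $\te_i$ then follows by the symmetric argument, or equivalently — since the construction of $\psi$ is visibly reversible — by the formal manipulation of Remark~\ref{remcyclage0}.

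It remains to check condition~(1). Since $\psi$ commutes with every $\te_i$ and $\te_i(\bladot)=0$ for all $i$, we get $\te_i(\psi(\bladot))=\psi(\te_i(\bladot))=0$ for all $i$, so $\psi(\bladot)$ has no good removable node and is a highest weight vertex. Writing an arbitrary $\bla\in B(\bladot,\bs)$ as $\bla=\tf_{i_r}\cdots\tf_{i_1}(\bladot)$, the intertwining relation gives $\psi(\bla)=\tf_{i_r}\cdots\tf_{i_1}(\psi(\bladot))\in B(\psi(\bladot),\xi(\bs))$, and every vertex of $B(\psi(\bladot),\xi(\bs))$ arises this way; hence $\psi$ is a bijection of $B(\bladot,\bs)$ onto $B(\psi(\bladot),\xi(\bs))=B(\overset{\bullet}{\psi(\bla)},\xi(\bs))$ that intertwines all crystal operators and maps highest weight vertex to highest weight vertex. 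This is exactly the assertion of the theorem.

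The real work — which is why Lemmas~\ref{lemisopsi1}--\ref{lemisopsi3} are isolated and proved in Appendix~\ref{appendix} — lies in the second paragraph: one must track how the first pseudoperiod and the node-correspondence $\ga\leftrightarrow\Ga$ defining $\psi$ are affected by adjoining the good addable $i$-node, in particular ruling out that this creates a pseudoperiod of larger width in $\tf_i(\bla)$, and disentangling the sub-cases $c=1$ versus $c>1$ and $\la^c_a<\al$ versus $\la^c_a=\al$ that appear in those lemmas. Everything else is formal.
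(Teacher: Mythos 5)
Your proof follows essentially the same route as the paper's: the same three-case split organized around Lemmas~\ref{lemisopsi1}--\ref{lemisopsi3}, and the same key commutation $\psi\circ\tf_i=\tf_i\circ\psi$. Your closing remark correctly identifies where the substance lies, namely in tracking how the first pseudoperiod of $\tf_i(\bla)$ relates to $P(\bla)$; note that in the paper this tracking is carried out in the body of the theorem's proof (where $P(\tf_i(\bla))$ is written down explicitly case by case, with the node $\de$ of Lemma~\ref{lemisopsi1} re-entering as the replacement for $\ga_k$) rather than in the appendix lemmas themselves, which only pinpoint the good addable $i$-node of $\psi(\bla)$.

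One small flaw: your verification of condition~(1) is not quite licensed as written. The commutation $\te_i\circ\psi=\psi\circ\te_i$ that you derive via Remark~\ref{remcyclage0} holds only when $\te_i$ acts non-trivially, so the chain $\te_i(\psi(\bladot))=\psi(\te_i(\bladot))=0$ applies the commutation precisely in the degenerate case where it has not been established (and $\psi(0)$ is not defined). To close this one would want the stronger statement that $\tf_i$ acts non-trivially on $\bla$ \emph{if and only if} it does on $\psi(\bla)$ (the lemmas only give one direction), or an argument via invertibility of $\psi$ on the relevant connected component. The paper's own proof does not address condition~(1) at all, so this is a point where you tried to be more complete than the source but left a gap.
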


\begin{proof}
We need to prove that for all $i\in\llb 0, e-1 \rrb $, 
\begin{equation}\label{iso} \tf_i(\psi(\bla))=\psi(\tf_i(\bla)). \end{equation}

Thanks to the previous lemmas, we know precisely what $\tf_i(\psi(\bla))$ is.
It remains to understand the right hand side of (\ref{iso}), by looking at the pseudoperiod of $\tf_i(\bla)$.
Let $P(\bla)=(\ga_1,\ga_2,\dots,\ga_e)$. 

The operator $\tf_i$ acts on $\bla$ either by:
\begin{enumerate}
\item Adding a node to a part $\al$ whose rightmost node belongs to $P(\bla)$. This is the setting of Lemma \ref{lemisopsi1}.
Let $\ga=\ga_k=(a,\al,c)$ be the node of $P(\bla)$ such that $\ga^+$ is the good addable $i$-node of $\bla$.
In this case, we have $$P(\psi(\bla))=(\ga_1,\ga_2,\dots,\ga_{k-1},\de,\ga_{k+1},\dots,\ga_e),$$
where: 
\begin{itemize}
 \item $\de=(b,\al,c+1)$, with $b=a+N_{c+1}^{>\al}+N_{c+1}^\al-N_c^{>\al}$, if $c<l$, and
\item $\de=(b,\al,1)$, with $b=a+N_{1}^{>\al}+N_{1}^\al-N_l^{>\al}$, if $c=l$.
\end{itemize}
Indeed, this node $\de$ is the same as the one determined in the proof of Lemma \ref{lemisopsi1} (and whose canonically associated node is $\De$).
The value of the row $b$ is simply computed using the fact that:
\begin{enumerate}
\item there is no part $\al$ above the part of rightmost node $\ga$,
\item all parts $\al$ above the part of rightmost node $\de$ in $\tf_i(\bla)$ have an element of $P(\bla)$ as rightmost node.
\end{enumerate}

But the part of $\tf_i(\bla)$ whose rightmost node is $\ga^+$ is a part of size greater than $\al$, 
and is is therefore shifted to the $(c+1)$-th component (if $c<l$), or the first component (if $c=l$) when building $\psi(\tf_i(\bla))$.
Hence, by deleting the elements of $P(\psi(\bla))$ and shifting the parts greater than $\al$,
we end up with the same multipartition as $\tf_i(\psi(\bla))$, whence the identity $\tf_i(\psi(\bla))=\psi(\tf_i(\bla))$.
This is illustrated in Example \ref{exaisopsi} below.

\item Adding a node to a part $\leq\al$ whose rightmost node does not belong to $P(\bla)$. This is the setting of Lemma \ref{lemisopsi2}.
In this case , we have $P(\psi(\bla))=P(\bla)$.
It is then straightforward that $\tf_i(\psi(\bla))=\psi(\tf_i(\bla))$.

\item Adding a node to a part $>\al$ (whose rightmost node necessarily does not belong to $P(\bla)$). This is the setting of Lemma \ref{lemisopsi3}.
Here, we also have $P(\psi(\bla))=P(\bla)$, as in the previous point.

\end{enumerate}

\end{proof}

\begin{exa}\label{exaisopsi}
We take the same example as \ref{exalemisopsi1}, 5., namely $\bla=(3.2.1^2,4.2.1,2^3)$, $\bs=(2,3,4)$, $e=4$ and $i=0$.
Then we have the following constructions:

$$ \xymatrix@!0 @R=5cm @C=7cm{
  \left( \; \young(234,1\bdeux,0,\moinsun) \; , \; \young(3456,2\btrois,1) \; ,\; \young(4\bcinq,3\bquatre,23) \; \right) \quad
  \ar[r]^-{\psi}\ar[d]_-{\tf_0}
&
  \quad \left( \; \young(0,\moinsun) \; , \; \young(234,1) \; ,\; \young(3456,23) \; \right)
  \ar[d]^-{\tf_0} 
\\
    \left( \; \young(234,1\bdeux,0,\moinsun) \; , \; \young(3456,234,1) \; ,\; \young(4\bcinq,3\bquatre,2\btrois) \; \right) \quad
\ar[r]^-{\psi}
&
    \quad \left( \; \young(0,\moinsun) \; , \; \young(234,1) \; ,\; \young(3456,234) \; \right)
  }$$

The bold contents represent the pseudoperiods. This illustrates the commutation between the operators $\psi$ and $\tf_i$.

\end{exa}

\begin{rem}\label{remisopsi}
Note that the charged multipartition $|\psi(\bla) , \xi(\bs) \rangle $ that we get is not cylindric anymore in general,
because $\xi(\bs)$ might not be in $\sS_e$.
If it is, then it is clear that $\psi(\bla)$ has one pseudoperiod less that $\bla$.
\end{rem}

\begin{rem}\label{remisopsi2}
Interestingly, this isomorphism $\psi$ can be seen as a generalisation of the cyclage isomorphism $\xi$.
Indeed, $\xi$ would be the version of $\psi$ for pseudoperiods of width $0$ 
(which can be found in any multipartition, considering that they have infinitely many parts of size $0$).
\end{rem}

By a simple use of the cyclage isomorphism, we can now easily determine a refinement $\Psi$ of the reduction isomorphism $\psi$
which maps a cylindric multipartition to another cylindric multipartition with one pseudoperiod less.

\medskip

Let $\bla\in\sC_\bs$. 
If $\xi(\bs)\notin\sS_e$, denote by $p$ the number of components of $\xi(\bs)$ equal to $\xi(\bs)_l$.
By Remark \ref{remmcpeel} and the proof of Proposition \ref{propmc}, $\xi^{1+p}(\bs) \in\sS_e$.
%




Define $\Psi(\bla)$ and $\Psi(\bs)$ in the following way:

\begin{itemize}
 \item If $\xi(\bs)\in\sS_e$, then $\Psi(\bla):=\psi(\bla)$ and $\Psi(\bs):=\xi(\bs)$
 \item If $\xi(\bs)\notin \sS_e$, then $\Psi(\bla):=(\xi^p\circ \psi)(\bla)$ and $\Psi(\bs):=\xi^{1+p}(\bs)$.
\end{itemize}

We denote $\Psi : |\bla,\bs\rangle \longmapsto  |\Psi(\bla),\Psi(\bs) \rangle$.
Then by construction, the following result holds:

\begin{prop}\label{propisoPsi}
 For all $\bla\in\sC_\bs$, we have $\Psi(\bla)\in\sC_{\Psi(\bs)}$.
 Moreover, $\Psi$ is a $\Ueprime$-crystal isomorphism, and $\Psi(\bla)$ has one pseudoperiod less than $\bla$.
\end{prop}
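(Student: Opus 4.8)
The plan is to deduce the three assertions formally from the two crystal isomorphisms already established: $\psi$ (Theorem~\ref{thmisopsi}) and the cyclage $\xi$ (Proposition~\ref{propcyclage}). First I would note that a composition of $\Ueprime$-crystal isomorphisms is again a $\Ueprime$-crystal isomorphism: condition (1) of Definition~\ref{crystaliso} composes trivially, and condition (2) composes because each factor is a graph isomorphism (so $\tf_i$ acts non-trivially on a vertex precisely when it acts non-trivially on its image) and the intertwining relations then chain. Since $\Psi$ equals $\psi$ on multipartitions in the first case and equals $\xi^{p}\circ\psi$ in the second, it follows that $\Psi:B(\bladot,\bs)\lra B(\Psi(\bladot),\Psi(\bs))$ is a $\Ueprime$-crystal isomorphism.

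Next I would verify that $\Psi(\bs)\in\sS_e$, so that $\sC_{\Psi(\bs)}$ is defined. In the first case this is the defining hypothesis $\xi(\bs)\in\sS_e$; in the second case $\Psi(\bs)=\xi^{1+p}(\bs)$ with $p$ the number of components of $\xi(\bs)$ equal to $\xi(\bs)_l$, and this lies in $\sS_e$ by Remark~\ref{remmcpeel} together with the proof of Proposition~\ref{propmc}, exactly as recalled just above the statement.

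It then remains to see that $\Psi(\bla)$ is cylindric with one pseudoperiod less than $\bla$. In the first case $\Psi(\bla)=\psi(\bla)$ and $\Psi(\bs)=\xi(\bs)\in\sS_e$, so this is Remark~\ref{remisopsi}: concretely, $\psi$ deletes the first pseudoperiod $P(\bla)$ and shifts the parts of size $>\al$ up by one component, the shift being exactly the one $\xi$ performs on the charge, which moves some contents by $-e$ but leaves all residues unchanged (Proposition~\ref{propcontents}), so by Remark~\ref{remcyclage1} the result is cylindric and has lost precisely one pseudoperiod. Alternatively, since $\Psi(\bs)\in\sS_e$, Proposition~\ref{propstability} makes $\sC_{\Psi(\bs)}$ stable under the crystal operators, and as $\Psi$ commutes with the $\tf_i$ it suffices to check cylindricity on the highest weight vertex $\bladot$. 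In the second case, the charged multipartition $|\psi(\bla),\xi(\bs)\rangle$ fails to be cylindric only because $\xi(\bs)\notin\sS_e$; applying the extra cyclages $\xi^{p}$ merely permutes components cyclically and translates charges by multiples of $e$, leaving every residue unchanged, so once the charge $\Psi(\bs)$ lies in $\sS_e$ the pair $|\Psi(\bla),\Psi(\bs)\rangle$ is cylindric (Remark~\ref{remcyclage1}) and still carries one pseudoperiod less than $|\bla,\bs\rangle$.

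The one genuinely non-formal ingredient --- and where I expect the real work to lie --- is the claim that $\psi$ removes \emph{exactly one} pseudoperiod, i.e.\ that deleting $P(\bla)$ and shifting the parts of size $>\al$ neither splits nor merges the surviving pseudoperiods nor creates a new one, and that the resulting symbol together with its cyclage is semistandard. This is a content-and-residue bookkeeping argument based on Lemma~\ref{lempp1} (consecutive contents, components moving weakly to the left along a pseudoperiod) and Proposition~\ref{propcontents} (invariance of residues under $\psi$), using crucially that, $\bla$ being cylindric, every part of size $\al$ lying below a part of $P(\bla)$ already has its rightmost node inside $P(\bla)$. Everything else in the statement is a formal consequence of Theorem~\ref{thmisopsi}, Proposition~\ref{propcyclage} and Proposition~\ref{propstability}.
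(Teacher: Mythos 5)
Your proposal is correct and follows essentially the same route as the paper: the paper states the proposition with the one-line justification ``by construction,'' treating it as a formal consequence of Theorem~\ref{thmisopsi}, Proposition~\ref{propcyclage}, Remark~\ref{remisopsi}, and the definition of $\Psi$ via the case split on whether $\xi(\bs)\in\sS_e$. You unpack that assertion more carefully than the paper does, and you correctly flag the one place where a nontrivial verification is implicitly invoked (that $\psi$ removes exactly one pseudoperiod without creating or merging others), which the paper itself dispatches with ``it is clear'' in Remark~\ref{remisopsi}.
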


We can now determine the canonical crystal isomorphism $\varphi$ for cylindric multipartitions.
Recall that we have already determined $\varphi(\bs)$ in Proposition \ref{propmc}.
It writes $\varphi(\bs)=\xi^k(\bs)$ for some $k$ explicitely determined.

\begin{rem}\label{remisoPsi}
The integer $t$ defined in the alternative proof of Proposition \ref{propmc} (end of Paragraph \ref{mc}) is simply the number of pseudoperiods in $\bla$.
\end{rem}

Denote $t$ the number of pseudoperiods in $\bla$.
Applying $t$ times $\Psi$ to $\bla$, we end up with a FLOTW multipartition, but charged by an element $\Psi^t(\bs)$ which might not be in $\sD_e$.
We now simply need to adjust it by some iterations of the cyclage isomorphism $\xi$.
Since $\Psi^t(\bs) \in\sS_e$, we are ensured that $\varphi(\bs)=(\xi^{u}\circ \Psi^t)(\bs)$ for some $u\in\Z$ easily computable.

Hence, we set $$\varphi(\bla):=(\xi^{u}\circ \Psi^t)(\bla),$$ and the following theorem is straightforward.

\begin{thm}\label{thmisophi} Let $|\bladot,\bs\rangle$ be a cylindric $l$-partition.
 The map $$\begin{array}{cccc} \varphi :&  B(\bladot,\bs) & \lra & B(\overset{\bullet}{\varphi(\bla)},\varphi(\bs)) \\
                                    & \bla  & \longmapsto  &  \varphi(\bla)
\end{array}$$ 
is the canonical $\Ueprime$-crystal isomorphism for cylindric multipartitions.
\end{thm}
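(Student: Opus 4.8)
The plan is to show directly that the map $\varphi=\xi^{u}\circ\Psi^{t}$ satisfies the properties that characterise the canonical $\Ueprime$-crystal isomorphism in Proposition \ref{defcanonicaliso}, and then to invoke the uniqueness part of that statement. Concretely I would check three things: (i) $\varphi$ is a $\Ueprime$-crystal isomorphism defined on all of $B(\bladot,\bs)$; (ii) its target multicharge $\varphi(\bs)$ lies in $\sD_e$; and (iii) $\varphi(\bla)$ is a FLOTW multipartition, equivalently $\overset{\bullet}{\varphi(\bla)}=\bemptyset$, so that the target crystal is $B(\varphi(\bs))$.

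For (i) I would first observe that $\Psi^{t}$ makes sense on $B(\bladot,\bs)$: by Proposition \ref{propstability} this whole component consists of cylindric multipartitions, and by Proposition \ref{propisoPsi} each application of $\Psi$ returns a cylindric multipartition (so its multicharge lies again in $\sS_e$), which allows the iteration; the cyclage $\xi$ is defined everywhere. Since each factor $\xi$ (Proposition \ref{propcyclage}) and $\Psi$ (Proposition \ref{propisoPsi}) is a $\Ueprime$-crystal isomorphism, and both defining conditions of Definition \ref{crystaliso} are stable under composition, $\varphi$ is a $\Ueprime$-crystal isomorphism; in particular it intertwines $B(\bladot,\bs)$ with $B(\overset{\bullet}{\varphi(\bla)},\varphi(\bs))$.

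For (ii) and (iii) the crucial step is to show $\Psi^{t}(\bladot)=\bemptyset$. By Proposition \ref{propisoPsi} each application of $\Psi$ removes exactly one pseudoperiod, and by Remark \ref{remisoPsi} the integer $t$ is the number of pseudoperiods of $\bla$; hence $|\Psi^{t}(\bla),\Psi^{t}(\bs)\rangle$ is cylindric with no pseudoperiod, which by Definition \ref{defflotw} means it is FLOTW. As $\Psi^{t}(\bs)\in\sS_e$, Theorem \ref{thmflotw} places $\Psi^{t}(\bla)$ in $B(\Psi^{t}(\bs))$, whose highest weight vertex is $\bemptyset$. On the other hand $\Psi^{t}$ carries the component $B(\bladot,\bs)$, which contains both $\bla$ and $\bladot$, onto a single connected component whose highest weight vertex is $\Psi^{t}(\bladot)$; since $\Psi^{t}(\bla)$ lies in that component and also in $B(\Psi^{t}(\bs))$, the two coincide, forcing $\Psi^{t}(\bladot)=\bemptyset$ (and a posteriori that the number of pseudoperiods is the same at every vertex of $B(\bladot,\bs)$, so $\varphi$ is unambiguously defined). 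Finally I would use Proposition \ref{propmc} to pick $u$ with $\xi^{u}(\Psi^{t}(\bs))=\varphi(\bs)\in\sD_e$, together with the fact that $\xi$ fixes the empty multipartition, to obtain $\varphi(\bladot)=\bemptyset$; so the target crystal is $B(\varphi(\bs))$ and $\varphi(\bla)$ is FLOTW by Theorem \ref{thmflotw} again. Then $\varphi$ is a $\Ueprime$-crystal isomorphism sending $|\bla,\bs\rangle$ to $|\varphi(\bla),\varphi(\bs)\rangle$ with $\varphi(\bs)\in\sD_e$ and $\varphi(\bla)$ FLOTW, and the uniqueness clause of Proposition \ref{defcanonicaliso} identifies it with the canonical isomorphism.

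I do not expect a genuine obstacle, since the statement is essentially an assembly of Theorem \ref{thmisopsi}, Proposition \ref{propisoPsi} and Proposition \ref{propmc} with the compositional stability of crystal isomorphisms. The only point deserving care is the pseudoperiod bookkeeping: one must check that $\Psi$ is applicable at each of the $t$ successive steps (at step $j<t$ the current multipartition still has $t-j\geq 1$ pseudoperiods, hence is not FLOTW) and that the intermediate multicharges stay in $\sS_e$, so that Theorem \ref{thmflotw} can be invoked at the end; both follow cleanly from Propositions \ref{propstability} and \ref{propisoPsi}. This is why the theorem is, as announced, straightforward.
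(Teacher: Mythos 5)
Your proposal is correct, and it follows the route the paper implicitly intends: the paper itself presents Theorem \ref{thmisophi} without a written proof, stating only that "the following theorem is straightforward" after assembling $\Psi$ (Proposition \ref{propisoPsi}), the cyclage $\xi$, and the multicharge computation of Proposition \ref{propmc}. Your write-up fills in precisely the bookkeeping that justifies that declaration: closure of crystal isomorphisms under composition, the fact that $\Psi^{t}(\bla)$ is cylindric with no pseudoperiod (hence FLOTW by Definition \ref{defflotw}) so that it lands in $B(\Psi^{t}(\bs))$ by Theorem \ref{thmflotw}, the identification of highest weight vertices forcing $\Psi^{t}(\bladot)=\bemptyset$ and hence $\varphi(\bladot)=\bemptyset$, and finally the uniqueness clause of Proposition \ref{defcanonicaliso}. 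The one point you could make slightly more explicit is why $\Psi^{t}$ is defined on the entire component (rather than just on $\bla$): by Proposition \ref{propstability} the cylindric condition is a property of the whole component, and by Theorem \ref{thmflotw} the FLOTW vertices form exactly $B(\bs)$, so at each stage of the iteration either every vertex of the current component has a pseudoperiod or none does, which is what makes $t$ constant along the component and makes your "a posteriori" remark a genuine conclusion rather than a hidden hypothesis. With that caveat, the argument is complete and matches the paper's intended reasoning.
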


\begin{rem}\label{remisophi}
Note that to determine $\varphi(\bs)$, we could also have built first $|\Psi^t(\bla),\Psi^t(\bs)\rangle$, and found $u$ such that $\xi^{u}(\Psi^t(\bs))\in\sD_e$.
Then, we would have set $\varphi(\bs)= \xi^{u}(\Psi^t(\bs))$. 
Clearly, this construction would give the same multicharge as the construction of $\varphi(\bs)$ in Section \ref{mc}.
The point of Section \ref{mc} is to show that the suitable multicharge is directly computable using only cyclages of $\bs$.
\end{rem}

\medskip

We can therefore express the canonical crystal isomorphism $\Phi$ in full generality.
Starting from any charged multipartition $|\bla,\bs\rangle$, we first apply $\rs$, we get $\bla'=\rs(\bla)$. 
Then, according to Section \ref{algorithm}, there is an integer $m$ such that $(\rs\circ\xi)^m(\bla')$ is cylindric.
Finally, we use $\Psi^t$ to delete all pseudoperiods
and adjust everything using $\xi^u$ so that we end up in the fundamental domain $\sD_e$ (Theorem \ref{thmisophi}).
It is easy to see that $t$ does not depend on $\bla$ and is constant along the crystal, since
it is just the number of pseudoperiods of $\bla$.
Moreover, $m$ does not depend on $\bla$ either because of Proposition \ref{mconstant}. 
Hence, neither does $u$, since it is just the "adjusting" multiplicity of $\xi$.
This gives a generic expression for $\Phi$, regardless of the multipartition $\bla$ we start with, and depending only of
the connected component $B(\bladot,\bs)$.
With the notation $\varphi=\xi^u \circ \Psi ^t$, this gives the following result:

\begin{cor}\label{corcci} Let $\bs\in\Z^l$ and $B(\bladot,\bs)$ be a connected component of the crystal graph of $\cF_\bs$.
Then the canonical crystal isomorphism is $$\Phi = \varphi \circ (\rs\circ\xi)^m\circ \rs. $$
\end{cor}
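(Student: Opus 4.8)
The plan is to recognise $\Phi$ as a composition of $\Ueprime$-crystal isomorphisms whose total effect is to send $B(\bladot,\bs)$ isomorphically onto the crystal of some $V(\br)$ with $\br\in\sD_e$ and highest weight vertex a FLOTW multipartition, and then to conclude by the uniqueness statement in Proposition-Definition \ref{defcanonicaliso}. Concretely, I would first record that each factor is a $\Ueprime$-crystal isomorphism: $\rs$ is one by Corollary \ref{corrs} combined with Proposition \ref{propcompat}; $\xi$ is one by Proposition \ref{propcyclage}; $\Psi$ is one by Proposition \ref{propisoPsi}; and hence so is $\varphi=\xi^{u}\circ\Psi^{t}$, which is moreover the canonical crystal isomorphism for cylindric multipartitions by Theorem \ref{thmisophi}. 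Since each of these maps sends a connected component of a Fock space crystal onto a connected component (carrying highest weight vertex to highest weight vertex and commuting with the $\tf_i$ whenever these act non-trivially), any legitimate composition of them is again a $\Ueprime$-crystal isomorphism in the sense of Definition \ref{crystaliso}.

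The one point requiring care is that the composition is indeed legitimate, i.e. that at each stage the output lies in the domain where the next map is meant to act, and that the exponents $m,t,u$ can be chosen uniformly on $B(\bladot,\bs)$. For $\rs$: by Corollary \ref{corrs} the symbol of $\rs(\bla)$ is semistandard, so in particular its multicharge is non-decreasing. For $(\rs\circ\xi)^{m}$: by Proposition \ref{propalgorithm} the algorithm of Section \ref{algorithm} terminates, and by Remark \ref{remalgorithm} each step of it is exactly one application of $\rs\circ\xi$; Proposition \ref{mconstant} then guarantees that the number $m$ of steps needed to reach a cylindric multipartition is constant on the connected component $B(\bladot,\bs)$. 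Thus $(\rs\circ\xi)^{m}\circ\rs$ maps $B(\bladot,\bs)$ into $\sC_{\bs'}$ for the resulting cylindric multicharge $\bs'$, so that Theorem \ref{thmisophi} applies to this cylindric component: $t$ is the (common) number of pseudoperiods of its vertices and $u$ is the cyclage multiplicity needed to reach $\sD_e$, both depending only on the component. Hence $\varphi$, and therefore $\Phi=\varphi\circ(\rs\circ\xi)^{m}\circ\rs$, is a single well-defined $\Ueprime$-crystal isomorphism attached to $B(\bladot,\bs)$.

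It then remains to identify the target. By Theorem \ref{thmisophi}, $\varphi$ sends the cylindric component onto $B(\overset{\bullet}{\varphi(\bla)},\varphi(\bs'))$ with $\varphi(\bs')\in\sD_e$ and $\varphi(\bla)$ FLOTW; equivalently, $\Phi$ maps $B(\bladot,\bs)$ onto the crystal of $V(\br)$ with $\br:=\varphi(\bs')\in\sD_e$, and $\Phi(\bladot)$ is a FLOTW $l$-partition. Since $\sD_e$ is a fundamental domain for the action of $\widehat{\fS_l}$, Proposition-Definition \ref{defcanonicaliso} asserts that the pair consisting of a multicharge in $\sD_e$ and a FLOTW multipartition equivalent to $\bla$ is unique; therefore $\Phi$ coincides with the canonical $\Ueprime$-crystal isomorphism, as claimed. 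I do not expect any genuinely new difficulty here: the substantive content has been placed in the results of Sections \ref{reduction} and \ref{cylindric}, and the only real work in proving the corollary is the bookkeeping just described — checking that the composition is defined at every stage and that $m$, $t$ and $u$ are crystal-constant, which is exactly what Proposition \ref{mconstant}, Remark \ref{remisoPsi} and the construction preceding Theorem \ref{thmisophi} provide.
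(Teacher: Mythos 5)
Your proposal is correct and follows essentially the same route as the paper: apply $\rs$, then $(\rs\circ\xi)^m$ using the algorithm of Section~\ref{algorithm} together with Proposition~\ref{mconstant} to reach a cylindric multipartition, then $\varphi=\xi^u\circ\Psi^t$ by Theorem~\ref{thmisophi}, observing that $m$, $t$, $u$ are constant on the connected component and that the composition of crystal isomorphisms lands in a FLOTW multipartition charged by an element of $\sD_e$, so that uniqueness in Proposition-Definition~\ref{defcanonicaliso} forces $\Phi$ to be the canonical isomorphism. The paper presents this as a running paragraph rather than a displayed proof, but the content and the cited intermediate results are the same.
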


\begin{rem}\label{remcci}
In concrete terms, since the data of $B(\bladot,\bs)$ is given by the $l$-charge $\bs$ and some vertex $\bla \in B(\bladot,\bs)$,
one can determine the expression of $\Phi$ by making these manipulations on $|\bla,\bs\rangle$.
If we know the highest weight vertex $\bladot$, it is natural to take $\bla=\bladot$ to compute $\Phi$.
\end{rem}

\subsection{Analogy with finite type $A$}

\medskip

With this purely algorithmic expression for $\Phi$, one can draw a parallel with the non affine case.
Recall that in Section \ref{e=inf}, we have seen that the canonical crystal isomorphism in the finite case was precisely Schensted's insertion procedure (Corollary \ref{corrs}).
In this perspective, one can naturally regard the canonical crystal isomorphism $\Phi$ as an analogue of Schensted's algorithm for affine type $A$.
Note that in Remark \ref{remanaloguersk}, we extend this comparison to the whole Robinson-Schensted-Knuth correspondence. 

Besides, in finite type $A$, it is well known that the equivalence relation of Definition \ref{defequivmp} reduces to the elementary \textit{Knuth relations},
which give rise to the classic structure of \textit{plactic monoid}, see for instance \cite{Lothaire2002} and \cite{Lecouvey2007}.
It would be interesting to see if it possible to reduce this equivalence relation in the affine case in a similar fashion, 
and describe it in terms of (even more) elementary crystal isomorphisms.

\section{An application}

\label{hw}

Let $\bs\in\Z^l$.
In this last section, we deduce a direct characterisation of all the vertices of any connected component of $B(\cF_\bs)$.

Fix $B(\bladot,\bs)$ a connected component of $B(\cF_\bs)$.
This implies that we know one of the vertices of $B(\bladot,\bs)$.
We assume without loss of generality (see Remark \ref{remcci}) that we know $\bladot$.
Then the expression of the canonical crystal isomorphism  $$\begin{array}{cccc} \Phi : & B(\bladot,\bs) & \lra & B(\br) \\ 
 & |\bla,\bs\rangle  & \longmapsto & |\bmu,\br\rangle \end{array}$$ is obtained by manipulating $|\bladot,\bs\rangle$.	
Corollary \ref{corcci} shows the three basic crystal isomorphisms needed to construct $\Phi$,
namely $\rs$, $\xi$, and $\psi$ (keeping in mind that $\varphi= \xi^k\circ\psi^t$ for some $t$ and $k$).
Amongst them, $\xi$ is the only map which is clearly invertible.
However, it is possible, keeping extra information, to make $\rs$ and $\psi$ invertible.

\subsection{Invertibility of the crystal isomorphism $\rs$}

First, it is well known (e.g. \cite{Fulton1997}) that the correspondence $\reading(\bla,\bs) \longmapsto \sP(\reading(\bla,\bs))$
becomes a bijection if we also associate to $\reading(\bla,\bs)$ its "recording symbol", i.e. the symbol with the same shape as $\sP(\reading(\bla,\bs))=:\sP$
in which we put the entry $k$ in the spot where a letter appears at the $k$-th step.
We denote by $\sQ(\reading(\bla,\bs))$ or simply $\sQ$ this symbol.

\begin{exa}\label{exainvertiblityrs} Take $\bs=(2,0)$ and $\bla=(3.2,3^2)$.
Then $$\fB_\bs(\bla)= 
\begin{pmatrix}
 0 & 4 & 5 & & \\
 0 & 1 & 2 & 5 & 7
\end{pmatrix}.$$

We get $\reading(\bla,\bs)= 54075210$.
We can thus give the sequence of symbols leading to $\sP$, and, on the right, the corresponding recording symbols, leading to $\sQ$.

$$\begin{array}{clcccccl}
&\begin{pmatrix}
5
\end{pmatrix} 
& & & &&
&\begin{pmatrix}
 1
\end{pmatrix}
\\
& & &&&
\\
&\begin{pmatrix}
 4 & 5
\end{pmatrix}
& & &&&
&\begin{pmatrix}
 1 & 2
\end{pmatrix}
\\
& & &&&
\\
&\begin{pmatrix}
 0 & 4 & 5
\end{pmatrix}
& & &&&
&\begin{pmatrix}
 1 &2 & 3
\end{pmatrix}
\\
& &&& &
\\
&\begin{pmatrix}
 0 & 4 & 5 \\
 7 & &
\end{pmatrix}
& & &&&
&\begin{pmatrix}
1 & 2 & 3 \\
4 & & 
\end{pmatrix}
\\
&&&&&
\\
&\begin{pmatrix}
 0 & 4 & 5 \\
 5 & 7 & 
\end{pmatrix}
&&&&&
&\begin{pmatrix}
 1 & 2 & 3 \\
4 & 5 & 
\end{pmatrix}
\\
&&&&&
\\
&\begin{pmatrix}
 0 & 4 & 5 \\
2 & 5 & 7
\end{pmatrix}
&&&&& 
&\begin{pmatrix}
 1 & 2 & 3 \\
4 &5 & 6
\end{pmatrix}
\\
&&&&&
\\
&\begin{pmatrix}
 0 & 2 & 4 & 5 \\
1 & 5 & 7 &
\end{pmatrix}
&&&&&
&\begin{pmatrix}
 1 & 2 & 3 & 7\\
4 & 5 & 6
\end{pmatrix}
\\
&&&&&
\\
\sP= &
\begin{pmatrix}
 0 & 1 & 2 & 4 & 5 \\
0 & 5 & 7 & &
\end{pmatrix}
&&&&&
\sQ= &
\begin{pmatrix}
1 & 2 & 3 & 7 & 8 \\
4 & 5 & 6 & &
\end{pmatrix}
  \end{array}$$

\end{exa}

\medskip

Therefore, this extra data $\sQ$ turns $\rs$ into a bijection.

\subsection{Invertibility of the reduction isomorphism}

Recall that the $l$-tuple charging  $\psi(\bla)$ is nothing but $\xi(\bs)$.
Hence, the computation of $\bs$ from $\psi(\bs)=\xi(\bs)$ is straightforward.
Moreover, starting from $\psi(\bla)$, it is easy to recover the $l$-partition $\bla$
provided we know the width $\al$ of the pseudoperiod that has been deleted.
In fact:
\begin{enumerate}
 \item  This data determines which parts will stay in the same component (namely the ones smaller than or equal to $\al$),
and which will be shifted "to the left" (namely the ones greater than $\al$).
Moreover, the property on the contents (Proposition \ref{propcontents}), which says that all nodes must keep the same content,
ensures that we can keep the boxes filled in with the same integers.
\item It remains to insert the $e$ parts of the $\al$-pseudoperiod at the right locations, i.e. so that the diagram
obtained is in fact the Young diagram of a charged multipartition
(which is possible thanks, again, to Proposition \ref{propcontents}).
\end{enumerate}

\begin{exa} We take, as in Example \ref{exapsi}, $e=4$ and
$$\psi(\bla)= \left( \; \young(456789,34,2,1) \; , \; \young(56789\dix,456789,34,2,1) \; , \; \young(678,5,4,3) \; \right),$$
and we suppose that we know the width of $P(\bla)$, namely $\al=2$.
Then the two steps above give:

\begin{enumerate}
 \item Shifting the parts greater that $\al$ and keeping the same filling:
$$\left( \; \young(56789\dix,456789,34,2,1) \; , \; \young(678,34,2,1) \; , \; \young(89\dix\onze\douze\treize,5,4,3) \; \right).$$
Note that at this point, this object cannot be seen as a charged $l$-partition (the entries in the boxes are not proper contents).
\item Inserting coherently the $4$ missing parts of size $2$ (represented in bold type):
$$\left( \; \young(56789\dix,456789,34,2,1) \; , \; \young(678,\bcinq\bsix,\bquatre\bcinq,34,2,1) \; , 
\; \young(89\dix\onze\douze\treize,\bsept\bhuit,\bsix\bsept,5,4,3) \; \right),$$
which is indeed equal to $\bla$.
\end{enumerate}

\end{exa}

\medskip

Hence, this extra data $\al$ turns $\psi$ into an invertible map.

\subsection{A pathfinding-free characterisation of the vertices of any connected component}

Both $\rs$ and $\psi$ being turned into bijections with the appropriate extra data,
we can turn the canonical crystal isomorphism $\Phi : B(\bladot,\bs) \lra B(\br)$ into an invertible map.
Concretely, this is achieved by collecting the recording data when computing $\Phi$ by manipulating $|\bladot,\bs\rangle$.
According to Corollary \ref{corcci}, this recording data consists of a pair $(\underline{\sQ},\underline{\al})$, where 
\begin{itemize}
\item $\underline{\sQ}$ is an $(m+1)$-tuple of recording symbols $(\sQ_0,\sQ_1,\dots ,\sQ_m)$ (corresponding to the occurences of $\rs$ in $\Phi$), and
 \item $\underline{\al}$ is a $t$-tuple of integers $(\al_1,\dots, \al_t)$ (the widths of the different pseudoperiods),
\end{itemize}
where $t$ and $m$ are such that $\Phi = \xi^u \circ \Psi^t \circ (\rs\circ\xi)^m\circ \rs$.

We write $\Phi^{-1} : B(\br) \lra B(\bladot,\bs)$ for the inverse map.

\medskip

Now, since the vertices of $B(\br)$ are FLOTW $l$-partitions, they have an explicit, non-recursive characterisation.
Hence, we have the following direct characterisation of all the vertices of $B(\bladot,\bs)$:

\begin{thm}\label{thmnonrec}
The set of vertices of $B(\bladot,\bs)$ is equal to $$\Big\{ \Phi^{-1}(\bmu) \; ; \; \bmu \in B(\br) \Big\}.$$
\end{thm}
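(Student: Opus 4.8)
The plan is to obtain the statement as an essentially formal consequence of Corollary \ref{corcci}, combined with the invertibility analysis carried out just above: once one knows that $\Phi$ is a bijection and that its inverse can be computed without pathfinding, the characterisation is immediate from Theorem \ref{thmflotw}.

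First I would recall that, by Corollary \ref{corcci}, $\Phi=\varphi\circ(\rs\circ\xi)^m\circ\rs$ is a $\Ueprime$-crystal isomorphism from $B(\bladot,\bs)$ onto $B(\br)$, with $\br=\xi^u(\Psi^t(\bs))\in\sD_e\subset\sS_e$. By condition (2) of Definition \ref{crystaliso} and the remark that follows it, $\Phi$ intertwines the oriented coloured graph structures of $B(\bladot,\bs)$ and $B(\br)$; in particular its image is all of $B(\br)$. Moreover $\Phi$ is injective: since it commutes with every $\tf_i$ acting non-trivially, it also commutes with the partial inverses $\te_i$, so if $\Phi(\bla)=\Phi(\bla')$ then applying a maximal sequence of $\te_i$'s to the common image and transporting it back shows that $\bla$ and $\bla'$ are obtained from $\bladot=\Phi^{-1}(\bemptyset)$ by one and the same word in the $\tf_i$, hence $\bla=\bla'$. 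Thus $\Phi$ is a bijection of vertex sets, and already at the set-theoretic level $B(\bladot,\bs)=\Phi^{-1}\bigl(B(\br)\bigr)=\bigl\{\Phi^{-1}(\bmu)\;;\;\bmu\in B(\br)\bigr\}$.

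Next I would make this description effective. The cyclage $\xi$ is visibly invertible; by the preceding two subsections, $\rs$ is turned into a bijection upon recording the symbol $\sQ(\reading(\bla,\bs))$, and $\psi$ is turned into a bijection upon recording the width $\al$ of the deleted pseudoperiod. Writing $\Phi=\xi^u\circ\Psi^t\circ(\rs\circ\xi)^m\circ\rs$, the inverse is then computed by running these elementary inversions in reverse order, using the recording data $(\underline{\sQ},\underline{\al})$ collected while computing $\Phi$ on $|\bladot,\bs\rangle$; here one uses that the multiplicities $m$, $t$, $u$ depend only on the component $B(\bladot,\bs)$ (Proposition \ref{mconstant} and Remark \ref{remisoPsi}), so the very same string of inverse steps applies at each vertex. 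Finally, by Theorem \ref{thmflotw} the vertices $\bmu$ of $B(\br)$ are exactly the FLOTW $l$-partitions for $\br$, which are described non-recursively by Definition \ref{defflotw}; plugging this description into the set on the right gives the announced pathfinding-free characterisation.

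The step I expect to demand the most care is the verification that $(\underline{\sQ},\underline{\al})$ really pins down a unique preimage — that running the reverse algorithm on a FLOTW $\bmu$ returns precisely the $\bla$ with $\Phi(\bla)=\bmu$, with no residual choices. This rests on the invertibility of Schensted's correspondence equipped with its recording tableau, on the reconstruction of a cylindric multipartition from its $\psi$-image and the pseudoperiod width (both treated in the preceding subsections), and on the routine fact that these inversions compose correctly with the trivially invertible cyclages; everything else is bookkeeping.
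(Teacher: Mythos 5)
Your argument is correct and follows the same route the paper takes: the paper gives no separate proof environment for this theorem but presents it as an immediate consequence of the preceding two subsections (which turn $\rs$ and $\psi$ into bijections via recording data) together with Corollary \ref{corcci} and the FLOTW characterisation of $B(\br)$ from Theorem \ref{thmflotw}. Your write-up just makes explicit the two facts the paper leaves implicit, namely that a $\Ueprime$-crystal isomorphism is automatically a bijection of vertex sets, and that the recording data $(\underline{\sQ},\underline{\al})$ pins down a well-defined global inverse because the multiplicities $m$, $t$, $u$ are constant on the connected component.
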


\begin{rem}
Following Remark \ref{remcci}, if we do not know $\bladot$ but some other $\bla\in B(\bladot, \bs)$ instead, one can still determine $\Phi$ and $\Phi^{-1}$.
Then Theorem \ref{thmnonrec} enables the direct computation of the highest weight vertex, namely $\bladot = \Phi^{-1}(\bemptyset)$.
\end{rem}

\begin{rem}\label{remanaloguersk}
This gives an analogue of the Robinson-Schensted-Knuth correspondence \cite{Lothaire2002}: we have a one-to-one correspondence
$$ |\bla,\bs\rangle \overset{1-1}{\longleftrightarrow} (|\bmu,\br \rangle , (\underline{\sQ},\underline{\al}) )$$
between the set of charged $l$-partitions on the one hand, and the set of pairs consisting of
an FLOTW $l$-partition $|\bmu,\br\rangle$ and a recording data $(\underline{\sQ},\underline{\al})$ on the other hand.

Alternatively, $(\underline{\sQ},\underline{\al})$ can be replaced by $|\bladot,\bs\rangle$, 
since the recording data is entirely determined by the highest weight vertex.
\end{rem}

\bigskip

\appendix
\section{Proof of Lemmas \ref{lemisopsi1} , \ref{lemisopsi2} and \ref{lemisopsi3}} \label{appendix}

\textit{Proof of Lemma \ref{lemisopsi1}.}
The proof first splits in three cases (which, in turn split in subcases), even though ultimately, the argument is the same. 
In each case (and subcase), we determine a certain node $\de$ of $\bla$ which is not in $P(\bla)$. 
Then, we show that the node of $\psi(\bla)$ canonically associated to $\de$ is the node $\De$ we expect.

\begin{enumerate}
\item Assume first that $\ga$ is the first element of $P(\bla)$.
Denote by $\ga_e=(a_e,\al,c_e)$ the last node of $P(\bla)$. Then $\cont(\ga_e)=\cont(\ga^+)-e$, and since $\ga^+$ is an $i$-node, then so is $\ga_e$.

\begin{enumerate}

\item Suppose that $\ga_e$ is removable (cf Example \ref{exalemisopsi1}, 1.). 
Since $\ga^+$ is the good addable $i$-node, the letter $R$ produced by $\ga_e$ in the $i$-word must not
simplify with the $A$ produced by $\ga^+$. This means that there exists an addable node $\tga^+$ in $\la^{\tc}$ with either 
\begin{itemize}
\item $\tc>c$ and $\cont(\tga^+)=\cont(\ga^+)$, or
\item $\tc < c_e$ and $\cont(\tga^+)=\cont(\ga^+)-e$ (one cannot have $\tc=c_e$ since otherwise $\ga_e$ would not be removable).
\end{itemize}

In the first case, this means that there is an integer $\be$ in the $\tc$-th row of $\fB_\bs(\bla)$
which is also in the $c$-th row and the same column. By the semistandardness of $\fB_\bs(\bla)$ (because $|\bla,\bs\rangle$ is cylindric),
we are ensured that $\be$ is also present in the row $d$ and the same column for all $d\in\llb c , \tc \rrb$.
This is equivalent to saying that there is a part of size $\al$ in each component $\la^d$ with $d\in\llb c , \tc \rrb$ 
whose rightmost node $\hga$ verifies $\cont(\hga)=\cont(\ga)$. We denote by $\de$ the one located in the component $\la^{c+1}$.
In particular, $\de^+$ is an $i$-node.
Moreover, $\de^+$ is an addable node of $\bla$. Indeed, if it is not, then there is a part of size $\al$ just above the part
whose rightmost node is $\tga$. Denote by $\overline{\ga}$ its rightmost node. Then $\cont(\overline{\ga})=\cont(\de)+1=\cont(\ga)+1$.
This implies that $\overline{\ga}$ must be in $P(\bla)$, which contradicts the fact that $\ga$ is the first element of $P(\bla)$.

In the second case, there is an integer $\be$ in the $\tc$-th row of $\fB_\bs(\bla)$
which is also in the $c_e$-th row and the same column, say column $k$. Again, since the symbol is semistandard, the elements $\fb_{d,k}$
appearing in row $d$, with $d\in\llb 1, \tc \rrb$, and in column $k$ verify  $\fb_d\geq\fb_{d'}\geq\be$ for $1\leq d'<d < c_e$.
But the cylindricity property also implies that $\fb_{l,k+e}\geq \fb_{1,k}+e$.
Actually, the $(k+e)$-th column of the symbol is also the column that contains the integer corresponding to the first node of $P(\bla)$, since pseudoperiods have length $e$.
Moreover, this element is equal to $\be+e$. In other terms, $\fb_{c,k+e}=\be+e$.
By semistandardness again, one must have $\be+e\geq \fb_{d,k+e} \geq \fb_{d',k+e}$
for all $c\leq d < d' \leq l$.
To sum up, we have $$ \be+e = \fb_{c,k+e} \geq \fb_{c+1,k+e} \geq \dots \geq \fb_{l,k+e} \geq \fb_{1,k}+e \geq \dots \geq\fb_{c_e-1,k}+e \geq \fb_{c_e,k}+e = \be +e ,$$
thus all inequalities are in fact equalities. 
As in the first case, this means in particular that there is a part of size $\al$ in $\la^{c+1}$ (if $c<l$) or in $\la^{1}$ (if $c=l$) 
whose rightmost node $\de$ verifies $\cont(\de)=\cont(\ga)-e$. In particular, $\de^+$ is an $i$-node.
Moreover, $\de^+$ is an addable node of $\bla$. Indeed, if it is not, then there is a part of size $\al$ just above the part
whose rightmost node is $\tga$. Denote by $\overline{\ga}$ its rightmost node. Then $\cont(\overline{\ga})=\cont(\de)+1=\cont(\ga)-e+1=\cont(\ga_e)$.
This implies that $\overline{\ga}$ must be the last element of $P(\bla)$ instead of $\ga_e$, which is a contradiction.

\item Suppose that $\ga_e$ is not removable (cf Example \ref{exalemisopsi1}, 2.). 
This means that there exists a part of  size $\al$ below the part whose rightmost node is $\ga_e$.
Note that the rightmost node $\tga$ of this part has content $\cont(\tga)=\cont(\ga_e)-1$.
By the same cylindricity argument used in 1.(a), this part of size $\al$ spreads in all components of $\bla$.
This means that there exists a part $\al$ in $\la^{c+1}$ (if $c<l$) or in $\la^1$ (if $c=l$) with rightmost node $\de$ verifying 
$\cont(\de)=\cont(\ga)$ (if $c<l$) and $\cont(\de)=\cont(\ga)-e$ (if $c=l$). In particular, $\de^+$ is an $i$-node.
Moreover, if $\de^+$ is addable, unless, of course, it is in the component $\la^{c_e}$. This can be seen using the exact same argument as in 1.(a).

\end{enumerate}

\item Assume now that $\ga$ is the last element of $P(\bla)$.
First of all, note that if $l>1$, one can never have $c=l$. Indeed, in this case $\ga^+$ would not be an addable node.

\begin{enumerate}

\item Suppose that $s_{c+1} > s_c$ (cf Example \ref{exalemisopsi1}, 3.). 
Then, using Proposition \ref{proppp}, we can claim that there exists a part of size $\al$ in the component $\la^{c+1}$.
Denote $\tga$ its rightmost node. By Lemma \ref{lempp1}, $\cont(\tga)=\cont(\ga)+1=\cont(\ga^+)$, and $\tga$ is an $i$-node.
Now, if $\tga$ is removable, then $\tga$ and $\ga^+$ yield an occurence of $RA$ which contradicts the fact that $\ga^+$ is the good $i$-node of type $A$.
Hence, there is necessarily a part $\al$ below the part whose rightmost node is $\tga$. Denote by $\de$ its rightmost node,
so that $\de^+$ is an $i$-node.

\item Suppose now that $s_{c+1}=s_c$ (cf Example \ref{exalemisopsi1}, 4.). Consider the previous node in $P(\bla)$, denote it by $\ga_{e-1}$.
By Lemma \ref{lempp1} again, $\cont(\ga_{e-1})=\cont(\ga)+1=\cont(\ga^+)$, so that $\ga_{e-1}$ is an $i$-node.
Besides, since $\ga^+$ is addable, $\ga_{e-1}$ is removable unless there is a part $\al$ below the part whose rightmost node is $\ga_{e-1}$.
But if $\ga_{e-1}$ is removable, then there exists an addable $i$-node $\tga$ in $\la^{\tc}$ with $\tc \in \llb c+1 , \dots, c_1-1 \rrb$, 
where $c_1$ is the component of $\bla$ which contains the first node of $P(\bla)$
(otherwise $\ga^+$ and $\ga_{e-1}$ yield and occurence $RA$ and $\ga^+$ cannot be the good addable $i$-node).
Then, by the cylindricity argument again, there is a part $\al$ in each component $\la^d$ with $d\in\llb c+1,\dots, \tc\rrb$, 
whose rightmost node has content $\cont(\ga)$.
In particular, this is true for $d=c+1$.
We denote $\de$ the one located in the component $\la^{c+1}$.

Now if $\ga_{e-1}$ is not removable, i.e. if there exists a part $\al$ below the part whose rightmost node is $\ga_{e-1}$,
then again the cylindricity implies that this parts spreads to all components $\la^d$ with $d \in \llb c+1 , \dots, c_{e-1}-1 \rrb$, 
where $c_{e-1}$ is the component of $\bla$ which contains $\ga_{e-1}$, and have the same contents.
Again, we denote $\de$ the rightmost node of the part $\al$ of $\la^{c+1}$ which has content $\cont(\de)=\cont(\ga)$.

\end{enumerate}

\item Assume finally that $\ga$ is neither the first nor the last node of $P(\bla)$. Again, $c$ cannot be equal to $l$ because then it would be the first node of $P(\bla)$.

\begin{enumerate}

\item Suppose that $s_{c+1} > s_c$ (cf Example \ref{exalemisopsi1}, 5.). We can use the same arguments as in 2.(a), and define $\de$ in the exact same way.

\item Suppose  that $s_c=s_{c+1}$ (cf Example \ref{exalemisopsi1}, 6	.). 
Then since $\ga^+$ is not the first node of $P(\bla)$, there exists a part $\al$ in a component $\la^{\tc}$
with $\tc>c$ whose rightmost node $\tga$ has content $\cont(\tga)=\cont(\ga)+1$.
If $\tc=c+1$, then we can use the previous case 3.(a).
If $\tc>c+1$, then:

\begin{itemize}
 \item if $\tga$ is removable, then there exists a part $\al$ in a component $\bar{c}$ with $c<\bar{c}<\tc$ whose rightmost node has content $\cont(\ga)$, 
otherwise $\tga$ produces, together with $\ga^+$, an occurence $RA$, whence the usual contradiction.
By cylindricity, such a part $\al$ also exists in the $(c+1)$-th component of $\bla$. We denote $\de$ its rightmost node.

\item if $\tga$ is not removable, then there exists a part $\al$ below the part whose rightmost node is $\tga$, with rightmost content equal to $\cont(\ga)$.
Again, by cylindricity, it also exists in the $(c+1)$-th component of $\bla$, and we denote $\de$ its rightmost node.

\end{itemize}

\end{enumerate}

\end{enumerate}

It is obvious, but important to notice, that $\de$ is not a node of $P(\bla)$.
Hence, there is a node $\De$ of $\psi(\bla)$ which is canonically associated to $\de$.
By Proposition \ref{propcontents}, $\cont_{\psi(\bla)}(\De)=\cont_{\bla}(\de)$ and in fact $\De=(a,\al,c+1)$ if $c<l$ and $\De=(a,\al,1)$ if $c=l$.
In particular $\De^+$ is an $i$-node.
Moreover, it is addable in $\psi(\bla)$ since $\de^+$ is either addable in $\bla$, 
or the rightmost node of a part above which sit parts that are deleted after applying $\psi$.

In fact, $\De^+$ is the good addable $i$-node of $\psi(\bla)$.
Indeed, consider the $i$-word for $\psi(\bla)$. Denote it by $w_i^\psi$, and denote $w_i$ the $i$-word for $\bla$. 
By construction of $\psi(\bla)$, the subword of $w_i$ corresponding to the rightmost nodes
of the parts that are either greater than $\al$ (respectively smaller than or equal to $\al$ but not in $P(\bla)$)
is also a subword of $w_i^\psi$.
The only differences that are likely to appear are the following:

\begin{itemize}
 \item  The letters $R$ and $A$ that correspond to nodes in $P(\bla)$ vanish. Note that we have assumed that there is always such a letter $A$ in $w_i$
(since $\ga$ is in $P(\bla)$).

\item The parts of size $\al$ that are below a part whose rightmost node is in $P(\bla)$ give a new letter $A$ in $\psi(\bla)$.
 \end{itemize}

Now by construction of $\de$:
\begin{enumerate}
 \item If $\de^+$ gives a letter $A$ in $w_i$, then it is adjacent to the letter $A$ encoding $\ga^+$, to its left.
Hence, since the $A$ encoding $\ga^+$ is no longer in $w_i^\psi$, the letter $A$ corresponding to $\De^+$ in $w_i^\psi$
plays the same role as the one corresponding to $\ga^+$ in $\bla$: it is the rightmost $A$ in the reduced $i$-word of $\psi(\bla)$.
In other terms, $\De^+$ is the good addable $i$-node of $\psi(\bla)$.
 \item If $\de^+$ does not give a letter $A$ in $w_i$, that is if there is an element of $P(\bla)$ just above $\de$,
then it is clear that, again, the $A$ encoding $\De^+$ in $w_i^\psi$ plays the same role as the $A$ encoding $\ga^+$ in $w_i$,
and that $\De^+$ is the good addable $i$-node of $\psi(\bla)$.
\end{enumerate}

\begin{flushright}
$\square$
\end{flushright}

\begin{exa} \label{exalemisopsi1} \hfil
 \begin{enumerate}
  \item $\bla=(4.2,2^2,5.2)$, $\bs=(2,3,4)$, $e=3$ and $i=2$.
\item $\bla=(6^2.2.1^3,4.2^3.1^2,6.2^2.1^4)$, $\bs=(5,6,8)$, $e=4$ and $i=1$.
\item $\bla=(2.1,1^3,1)$, $\bs=(3,4,5)$, $e=4$ and $i=3$.
\item $\bla=(1,1,1^3)$, $\bs=(4,4,7)$, $e=4$ and $i=1$.
\item $\bla=(3.2.1^2,4.2.1,2^3)$, $\bs=(2,3,4)$, $e=4$ and $i=0$.
\item $\bla=(2^2,3.2,2)$, $\bs=(3,4,4)$, $e=3$ and $i=0$.
 \end{enumerate}

\end{exa}

\medskip

\textit{Proof of Lemma \ref{lemisopsi2}.}
First, note that $\Ga$ is nothing but the node of  $\psi(\bla)$ canonically associated to $\ga$ in the definition of $\psi$.
Besides, by definition of $\psi(\bla)$, together with Proposition \ref{propcontents}, we are ensured that $\Ga^+$ is in fact an addable $i$-node of $\psi(\bla)$.
Moreover, $w_i^\psi$ is likely to contain new letters $A$, namely the one corresponding to nodes $\eta^+$ with $\eta=(b,\al,d)\notin P(\bla)$ 
lying just below a node of $P(\bla)$. Denote by H the node of $\psi(\bla)$ corresponding to $\eta$.
Note that $\cont(\eta)\neq\max_{\tga\in P(\bla)}(\cont(\tga))$.
Indeed, since there is a node of $P(\bla)$ just above $\eta$, it is not the first node of $P(\bla)$
and hence it does not have maximal content.
Since $\eta\notin P(\bla)$, there exists a node $\tga=(\ta,\al,\tc)\in P(\bla)$ such that $\cont(\tga)=\cont(\eta)$, $\tc<d$, and $\tga^+$ is addable.
Hence H plays the same role in $\psi(\bla)$ as $\tga$ in $\bla$. 
In particular, since the good addable $i$-node of $\bla$ is in a part of size smaller than $\al$, there is necessarily a letter $R$, encoding a node $\rho$, 
that simplifies with the letter $A$ encoding $\tga^+$.
Now:
\begin{itemize}
\item If $\rho$ is the rightmost node of a part of size different than $\al$, then it is obviously not in $P(\bla)$.
 \item If $\rho$ is the rightmost node of a part of size $\al$ and $\cont(\rho)=\cont(\eta^+)$, then it is not in $P(\bla)$ either.
Indeed, there is a node of $P(\bla)$ just above $\eta$ whose content is $\cont(\eta^+)$, which is therefore not of type $R$, and hence different from $\rho$,
and since all nodes of $P(\bla)$ have different contents, $\rho$ is not in $P(\bla)$.
\item If $\rho$ is the rightmost node of a part of size $\al$ and $\cont(\rho)<\cont(\eta^+)$,
then it is not in $P(\bla)$ either, because the contents of the nodes of $P(\bla)$ are consecutive (cf. Lemma \ref{lempp1}), and
because $\rho$ does not have maximal content.
\end{itemize}
Therefore, the letter $R$ encoding $\rho$ is also present in $w_i^\psi$, and simplifies with the $A$ encoding H.
This implies that $\Ga^+$ is the good addable $i$-node of $\psi(\bla)$.

\begin{flushright}
$\square$
\end{flushright}

\medskip

\textit{Proof of Lemma \ref{lemisopsi3}.}
As in Lemma \ref{lemisopsi2}, $\Ga$ is the node of $\psi(\bla)$ canonically associated to $\ga$.
Consider the letter $A$ encoding $\ga^+$. It is the rightmost letter $A$ in $w_i$ and does not simplify.
By Proposition \ref{propcontents}, $\Ga^+$ is also encoded by a letter $A$ which the righmost letter $A$ in $w_i^\psi$.
It remains to show that it does not simplify with any letter $R$ either.
In fact, as noticed in the previous proofs, the deletion of the pseudoperiod, in the construction of $\psi(\bla)$, cannot yield any new letter $R$.
However, some letters $A$ encoding nodes of $P(\bla)$ can vanish. Denote by $\eta^+$ such a node.

Suppose first that $\eta$ is not the first node of $P(\bla)$.
Since $\eta^+$ is addable, there cannot be another element of $P(\bla)$ above $\eta$.
Then denote by $\eta_1$ the node of $P(\bla)$ which has content $\cont(\eta_1)=\cont(\eta)+1$  (i.e. the previous node of $P(\bla)$).
Note that if $\eta$ is in $\la^d$, then $\eta_1$ is in $\la^{d+1}$.
Suppose now that $\eta$ is the first node of $P(\bla)$. Then, similarly, consider the last node of $P(\bla)$ and denote it by $P(\bla)$.
In each case, either:
\begin{itemize}
 \item there is a part $\al$ just below the part whose rightmost node is $\eta_1$, in which case the node just below $\eta_1$
is not in $P(\bla)$ and yields an addable node in $\psi(\bla)$ which plays exactly the same role in $\psi(\bla)$ as $\eta$ in $\bla$;
\item or there is no node just below $\eta_1$, in which case $\eta_1$ is encoded by a letter $R$ which simplifies with
the letter $A$ encoding $\eta^+$.
\end{itemize}

As a consequence, we are ensured that the letter $A$ encoding $\Ga^+$ does not simplify in $w_i^\psi$, and hence $\Ga^+$ is the good addable
$i$-node of $\psi(\bla)$.

\begin{flushright}
$\square$
\end{flushright}

\medskip

\textbf{Acknowledgments:} I thank Ivan Losev for bringing an important proof to my attention,
as well as Peng Shan for some clarifying conversations. 
My gratitude goes to C\'edric Lecouvey and Nicolas Jacon for their availability and commitment.

\bibliographystyle{plain}
\bibliography{biblio}

\textbf{Mathematics Subject Classification :} 05E10, 17B37, 20C08, 68R15.

\end{document}